\title{The Runge--Kutta discontinuous Galerkin method with stage-dependent polynomial spaces for hyperbolic conservation laws}
\author{Qifan Chen\thanks{Department of Mathematics, The Ohio State University, Columbus, OH 43210, USA. (\email{chen.11010@osu.edu}) }
  \and Zheng Sun\thanks{Department of Mathematics, The University of Alabama,
		Tuscaloosa, AL 35487, USA. (\email{zsun30@ua.edu}) The work of this author is partially supported by the NSF grant DMS-2208391.} \and Yulong Xing\thanks{Department of Mathematics, The Ohio State University,
		Columbus, OH 43210, USA. (\email{xing.205@osu.edu}) The work of this author is partially supported by the NSF grant DMS-1753581 and DMS-2309590.}}
\DeclareMathOperator{\diag}{diag}
\newcommand{\dd}{\mathrm{d}}
\newcommand{\DG}{\mathrm{DG}}
\newcommand\dxDGa[1]{\nabla^{\mathrm{DG}}_{#1}\cdot}
\newcommand{\dxDGk}{\nabla^{\mathrm{DG}}_k\cdot}
\newcommand{\dxDGkm}{\nabla^{\mathrm{DG}}_{k-1}\cdot}
\newcommand{\hf}{\frac{1}{2}}
\newcommand{\ext}{\mathrm{ext}}
\newcommand{\inte}{\mathrm{int}}
\newcommand{\nm}[1]{\left\|#1\right\|}
\newcommand{\quand}{\quad \text{and} \quad}
\newcommand{\cfl}{\lambda}
\newcommand{\omt}{\overline{t}}
\begin{document}

\maketitle 

\begin{abstract}
 In this paper, we present a novel class of high-order Runge--Kutta (RK) discontinuous Galerkin (DG) schemes for hyperbolic conservation laws. The new method extends beyond the traditional method of lines framework and utilizes stage-dependent polynomial spaces for the spatial discretization operators. To be more specific, two different DG operators, associated with $\mathcal{P}^k$ and $\mathcal{P}^{k-1}$ piecewise polynomial spaces, are used at different RK stages. The resulting method is referred to as the sdRKDG method. It features fewer floating-point operations and may achieve larger time step sizes. For problems without sonic points, we observe optimal convergence for all the sdRKDG schemes; and for problems with sonic points, we observe that a subset of the sdRKDG schemes remains optimal. We have also conducted von Neumann analysis for the stability and error of the sdRKDG schemes for the linear advection equation in one dimension. Numerical tests, for problems including two-dimensional Euler equations for gas dynamics, are provided to demonstrate the performance of the new method. \end{abstract}
\begin{keywords}
Discontinuous Galerkin method, Runge–Kutta method, stage-dependent polynomial spaces, von Neumann analysis, hyperbolic conservation laws
\end{keywords}
\begin{AMS}
65M60, 65M12, 65M20, 65L06
\end{AMS}

\section{Introduction}

The development of the Runge--Kutta (RK) discontinuous Galerkin (DG) method was pioneered by Cockburn et al. in 
\cite{rkdg1,rkdg2,rkdg3,rkdg4,rkdg5,cockburn2001runge} for solving hyperbolic conservation laws. It possesses desirable properties, such as preservation of local conservation, good $hp$ adaptivity, and flexibility for fitting complex geometry, etc. The method has been widely recognized and become one of the mainstream numerical methods for computational fluid dynamics in recent decades.  

Originally, the construction of the RKDG method follows the common practice of the method of lines (MOL). The idea is to first apply the DG spatial discretization to convert the partial differential equations (PDEs) into an ordinary differential equation (ODE) system, and then use a high-order RK method for the time discretization. In the MOL approach, the same DG operator is used at all stages of the RK discretization, which is conceptually simple and straightforward to implement. 

The idea of MOL has guided the development and analysis of the DG schemes and led to a more profound understanding of the method. Firstly, in the MOL approach, one naturally expects that the fully discrete RKDG solutions converge to those of the semi-discrete DG scheme when the time step size approaches zero. Therefore, the analysis of the semi-discrete scheme, such as \cite{jiang1994cell,shu2009discontinuous,meng2016optimal,chen2017entropy,chan2018discretely,liu2020optimal}, may give indications on the performance of the fully discrete RKDG schemes with small time steps. Secondly, for the fully discrete cases with suitable RK methods, the MOL approach usually allows reducing the complicated high-order schemes into the same minimum module, such as the forward Euler steps and a single DG operator, which simplifies the design and analysis of the RKDG method \cite{rkdg2,zhang2010maximum,meng2016optimal,zhang2011third,xu2020superconvergence}. 

Despite various positive outcomes from MOL, it may be natural to ask: is MOL always a golden principle for the design of the RKDG schemes, and can we go beyond MOL and allow different spatial operators at different stages? On the one hand, recent works on RKDG schemes reveal that the RK time integration may not always preserve the properties of the semi-discrete MOL scheme. For example, the $L^2$ and entropy stability of the semi-discrete DG method may be violated in the fully discrete settings \cite{sun2019strong,xu20192,ranocha2020relaxation}. On the other hand, with theoretical tools developed \cite{sun2017stability,xu2024sdf} in recent years, people become capable to analyze fully discrete schemes with different spatial operators. 

There have been a few recent works on RKDG schemes beyond MOL, allowing different DG approximations for the spatial derivative at different temporal stages.  
In \cite{xu2024sdf}, Xu et al. studied the RKDG method with stage-dependent fluxes. Their work focuses on the linear hyperbolic equations in one dimension and considers stage operators involving different numerical fluxes. Rigorous stability and error analysis were presented in their manuscript. In our recent work \cite{chen2023runge}, we propose a compact RKDG method by hybridizing two different spatial operators in an explicit RK scheme. We reduce the stencil size over the RKDG method by utilizing local projected derivatives for inner RK stages and using the standard DG operators only at the final stage of each time step. The resulting method is more compact and features reduced data communications. This method also inherits many advantages of the standard RKDG method such as local conservation and high-order accuracy. From these works, it can be seen that replacing the DG operator with unstable or less accurate discretization at certain stages may not affect the overall stability and accuracy of RKDG schemes. 

Motivated by these recent works, we further explore the possibilities beyond the traditional MOL and propose an improved framework for the RKDG method. We construct a novel RKDG method by hybridizing  DG discretizations with different piecewise polynomial spaces within each time step, and therefore name it by the RKDG method with stage-dependent polynomial spaces (sdRKDG method). Our method is still based on the RK schemes, which can be either in the Butcher form or the Shu--Osher form (also known as the strong-stability-preserving (SSP) form \cite{gottlieb2001strong,gottlieb2011strong}). We mix two different DG operators into the RK stages, one uses the piecewise polynomial space of degree $k$, and the other uses the piecewise polynomial space of degree $k-1$. To obtain the $(k+1)$th order convergence rate,  we adopt a $(k+1)$th order RK method for time integration.   When the same DG operator is used at all stages, the sdRKDG method retrieves the standard RKDG method. Therefore, the sdRKDG method can be considered as a generalization of the standard RKDG method. 

Below, we summarize the properties of the new sdRKDG method:
\begin{itemize}
    \item (Stability and time step sizes.)
    The sdRKDG method may achieve improved Courant–Friedrichs–Lewy (CFL) conditions. We perform von Neumann analysis to show that, for several sdRKDG schemes in this paper, the new method allows larger time step sizes (30\% to 70\% larger) compared to standard RKDG schemes, which leads to enhanced efficiency and robustness. 
    \item (Accuracy with and without sonic points.) The von Neumann analysis shows that for the linear advection equation, the sdRKDG method achieves the same optimal order of accuracy as that of the standard RKDG scheme. The optimal convergence is also observed for nonlinear problems without sonic points. However, for problems with sonic points, some of the sdRKDG schemes may suffer accuracy reduction. See \cref{ex:1dburgers}.
    \item (Implementation and floating-point operations.) Since the sdRKDG method can be implemented by replacing selected $\mathcal{P}^k$ DG operator with the $\mathcal{P}^{k-1}$ DG operator, the sdRKDG code can be easily modified from an RKDG code in the modal implementation. Due to the use of a cheaper $\mathcal{P}^{k-1}$ DG operator, the sdRKDG method features fewer floating-points operations compared to the standard RKDG method. 
    \item (Inherited properties associated with cell averages.) It can be seen that, in each temporal stage, the sdRKDG method uses the same formula to update the cell average values as the standard RKDG method. Thus, the sdRKDG schemes also preserve local conservations. Moreover, with SSP-RK time integration and appropriate limiters, the sdRKDG method can also preserve the total-variation-diminishing (TVD) in the means (TVDM) property and the maximum principle.     
\end{itemize}

Before concluding this section, we would like to point to related literature outside of the RKDG framework on blending different approximations into a fully discrete RK scheme. Here we only provide an incomplete list of references due to the space limit. In the design of Lax--Wendroff DG methods \cite{guo2015new,sun2017stability}, it was discussed to use DG operators with both upwind and downwind fluxes in the approximations of the high-order spatial derivatives in the Cauchy--Kowalewski procedure. In the modified equation approach for the fully discrete schemes for the wave equation (see, for example,  \cite{henshaw2006high}, and references therein), it is aware that one can use high-order spatial operators for low-order terms in the Taylor expansion and use low-order operators for high-order terms. In \cite{tsai2014two},  Tsai et al. proposed the modified method of lines, in which they investigated the two-derivative RK method and proposed to use a different spatial operator to approximate the second-order derivative in the discretization. 
In recent works on the mixed-precision RK methods for ODE systems \cite{grant2022perturbed,croci2022mixed}, the error effect of mixing single-precision and double-precision operators at different stages of a RK scheme has been studied. 

The rest of the paper is organized as follows.  In \Cref{sec:construction}, we revisit the standard RKDG method and introduce the proposed sdRKDG method. In \Cref{sec:prop}, we discuss theoretical properties  and conduct von Neumann analysis of the sdRKDG schemes. Numerical tests are provided in  \Cref{sec:Numerical results} and the conclusions are drawn in \Cref{sec:conclusions}.

\section{Numerical schemes}
\label{sec:construction}

In this section, we start by briefly reviewing the RKDG method and then describe in detail the construction of the sdRKDG method. For ease of notation, we focus on scalar conservation laws, but the method can be extended to systems of conservation laws straightforwardly. 

\subsection{RKDG schemes}
Consider the hyperbolic conservation laws
\begin{equation}
	\partial_t u +\nabla \cdot f(u) = 0, \qquad u(x,0) = u_0(x).  
	\label{eq:conservation law}
\end{equation}
We denote by $\mathcal{T}_h = \{K\}$ the partition of the spatial domain in $d$ dimension. $h_K$ is the diameter of $K$ and $h = \max_{K\in \mathcal{T}_h} h_K$. We denote by $\partial K$ the boundary of $K$. For each edge $e \in \partial K$, $\nu_{e,K}$ is the unit outward normal vector along $e$ with respect to $K$. For a numerical approximation of the solution, we introduce the space $
	\mathcal{V}_h^\ell = \{v_h:v_h|_{K}\in \mathcal{P}^\ell(K),\, \forall K \in \mathcal{T}_h\}$,
where $\mathcal{P}^\ell(K)$ is the linear space of polynomials of degree up to $\ell$ on the cell $K$. The standard semi-discrete DG method for solving \eqref{eq:conservation law} is defined as follows: find $u_h \in \mathcal{V}_h^k$ such that on each $K\in \mathcal{T}_h$, 
\begin{equation}\label{eq:DGsemiweak}
	\int_K\left(u_{h}\right)_{t} v_h \dd x-\int_K f\left(u_{h}\right) \cdot \nabla v_h \dd x+\sum_{e\in \partial K} \int_{e} \widehat{f\cdot \nu_{e,K}} v_h \dd l   = 0, \quad \forall v_h\in \mathcal{V}_h^k,
\end{equation}
where $\widehat{f\cdot \nu_{e,K}}$ is the so-called numerical flux. In this paper, as an example, we usually take the Lax--Friedrichs type fluxes
\begin{equation*}
	\widehat{f\cdot \nu_{e,K}}  
        = \widehat{f\cdot \nu_{e,K}} (u_h) 
        = \hf\left(f(u_h^\inte)\cdot\nu_{e,K} + f(u_h^\ext)\cdot\nu_{e,K} - {\alpha_{e,K}}\left(u_h^\ext - u_h^\inte\right)\right), 
\end{equation*}
with $\alpha_{e,K} = \max \left|\partial_u f\cdot \nu_{e,K}\right|$. Here $u_h^\inte$ and $u_h^\ext$ are limits of $u_h$ along $e$ from the interior and exterior of the cell $K$, respectively. 

We introduce the discrete operator $\dxDGk f:\mathcal{V}_h^k\to \mathcal{V}_h^k$, defined by
\begin{equation}
	\int_K \dxDGk f(u_h) v_h \dd x = -\int_K f(u_h)\cdot \nabla v_h \dd x + \sum_{e\in \partial K} \int_e \widehat{f\cdot \nu_{e,K}} v_h \dd l, \quad \forall v_h \in \mathcal{V}_h^k.\label{eq:DG operator}
\end{equation}
Therefore the semi-discrete DG scheme \eqref{eq:DGsemiweak} can be rewritten in the strong form
\begin{equation}\label{eq:semiDG}
	\partial_t u_h + \dxDGk f(u_h) = 0.
\end{equation}

To discretize the ODE system \eqref{eq:semiDG} in time, we apply an explicit RK method associated with the Butcher tableau
\begin{equation*}
	\begin{array}{c|c}
		c&A\\\hline
		&b\\
	\end{array},\quad A = (a_{ij})_{s\times s}, \quad b = (b_1,\cdots, b_s), \quad c = (c_1,\cdots, c_s)^\intercal.
\end{equation*}
Here $A$ is a lower triangular matrix, namely, $a_{ij} = 0$ if $i\leq j$. The corresponding RKDG scheme is given by 
\begin{align*}
		u_h^{(i)} =&\, u_h^n -  \Delta t\sum_{j = 1}^{i-1}  a_{ij} \dxDGk f\left(u_h^{(j)}\right), \quad  i = 1, 2, \cdots, s,\\
		u_h^{n+1} =&\, u_h^n - \Delta t \sum_{i = 1}^s b_i \dxDGk  f\left(u_h^{(i)} \right).
	\end{align*}
Note we have $u_h^{(1)} = u_h^n$. 

 \begin{remark}[RK methods in Shu--Osher form]\label{rmk:shu-osher}
		We can also apply an $s$-stage RK method in Shu--Osher representation as in \cite{gottlieb2011strong} to discretize \eqref{eq:semiDG}  in time
	\begin{subequations} \label{rk:ssp way1}
		\begin{align}
			u_h^{(0)} & =u_h^n, \\
			u_h^{(i)} & =\sum_{j=0}^{i-1}\left(\alpha_{i j} u_h^{(j)}-\Delta t \beta_{i j} \dxDGk f\left(u_h^{(j)}\right)\right), \quad \alpha_{ij}\geq 0, \quad i=1, \ldots, s, \\
			u_h^{n+1} & =u_h^{(s)}.
		\end{align}		
	\end{subequations}
An RK method in Shu--Osher form is also known as an SSP-RK method when $\beta_{ij}\geq 0$. Notice that every RK method in the form of \eqref{rk:ssp way1} can be easily converted in a
unique way into the Butcher form \cite{gottlieb2011strong}. An RK method written in the
Butcher form can also be rewritten into the form \eqref{rk:ssp way1}, however, this conversion is
not unique in general.
\end{remark}

\subsection{sdRKDG schemes} 

Similar to \eqref{eq:DG operator}, we introduce a discrete operator denoted by  $\dxDGkm f: \mathcal{V}_h^k \to \mathcal{V}_h^{k-1}$ such that 
\begin{equation}\label{eq:dxl}
	\int_K \dxDGkm f\left(u_{h}\right) v_h \dd x=-\int_K f\left(u_{h}\right)\cdot \nabla v_h \dd x+\sum_{e\in \partial K} \int_e \widehat{f\cdot \nu_{e,K}} v_h \dd l, \quad \forall v_h \in \mathcal{V}_h^{k-1}.
\end{equation}
The difference between the standard DG operator $\dxDGk f$ in \eqref{eq:DG operator} and the new operator $\dxDGkm f$ in \eqref{eq:dxl} is that the polynomial order of the function space for the output changes from $k$ to $k-1$. Equivalently, one can see that 
\begin{equation}\label{eq:dxpl}
\dxDGkm f\left(u_{h}\right)=\Pi_{k-1}\, \dxDGk  f\left(u_{h}\right),
\end{equation}
where $\Pi_{k-1}$ is the standard $L^2$ projection to $\mathcal{V}_{h}^{k-1}$.

With $\dxDGkm f$ defined above, we propose our new sdRKDG method. Our main idea is to hybridize two different spatial operators $\dxDGkm f$ and $\dxDGk f$ at different temporal stages of an RKDG scheme. In the Butcher form, our sdRKDG scheme can be written as the following
\begin{subequations}\label{eq:sdRKDG}
	\begin{align}
		u_h^{(i)} =& u_h^n -  \Delta t\sum_{j = 1}^{i-1} a_{ij} \nabla^\DG_{d_{ij}}\cdot f\left(u_h^{(j)}\right), \quad  d_{ij}\in \{k-1,k\}, \quad  i = 1,2,\cdots, s, \label{eq:sdRKDG-in}
  \\
		u_h^{n+1} =& u_h^n - \Delta t\sum_{i = 1}^{s} b_i \nabla^\DG_{e_i}\cdot f\left(u_h^{(i)}\right), \quad e_i\in \{k-1,k\} \text{ for }i<s,\quad  e_s=k.
  \label{eq:sdRKDG-last}
	\end{align}
\end{subequations}
Here we introduce $D = (d_{ij})$ and $e = (e_i)$ to denote the spatial discretization used in each stage. Note that, although $\dxDGkm f(u_h^{(j)})\in\mathcal{V}_h^{k-1}$, we always have $u_h^{(i)}\in\mathcal{V}_h^{k}$. When $a_{ij} = 0$ or $b_i = 0$, the corresponding values of $d_{ij}$ and $e_i$ are nonessential, and we will leave them as blanks. For simplicity, we refer to the scheme \eqref{eq:sdRKDG} with an enlarged tableau as the following 
 \begin{equation*}
	\begin{array}{c|c:c}
		c&A&D\\\hline
		&b&e\\
	\end{array},\quad A = (a_{ij})_{s\times s},   D = (d_{ij})_{s\times s},   b = (b_1,\cdots, b_s),   e = (e_1,\cdots, e_s).
\end{equation*}

In the above scheme, the spatial operators and their corresponding polynomial spaces are stage-dependent. For this reason, we name our method as the sdRKDG method. In contrast, in the standard RKDG scheme, the same $\dxDGk f$ is used at all places, corresponding to \eqref{eq:sdRKDG} with $d_{ij} = e_i = k$ for all $i,j$. In this sense, the sdRKDG method can be considered as a generalization of the standard RKDG method. 

The question is  how the stage-dependent polynomial spaces will affect the stability and accuracy of the fully discrete scheme. As in the standard RKDG method, we will set $p = k+1$, where $p$ is the order of the RK time discretization. We expect $(k+1)$th order convergence under the CFL condition $\Delta t \leq \lambda h$. This rate is referred to as the optimal rate, and the corresponding RKDG scheme is referred to as the RKDG$(k+1)$ scheme when there is no confusion. Based on our von Neumann analysis and numerical experiments, we classify our sdRKDG schemes into the following two groups. 
\begin{enumerate}
    \item Class A:  $e_i = k$ for all defined $e_i$. Namely, in the last stage \eqref{eq:sdRKDG-last}, all stage operators are $\dxDGk f$. This class of schemes exhibits optimal convergence in all tested cases. The standard RKDG schemes also belong to this class.  
    \item Class B:  $e_{i_0} = k-1$ for some $i_0<s$. Namely, in the last stage \eqref{eq:sdRKDG-last}, $\dxDGkm f$ is used for some of the stage operators. According to our numerical tests, this class of schemes admits optimal convergence for problems without sonic points, but degenerates to $(k+1/2)$th order rate in $L^2$ norm for problems with sonic points. Despite the suboptimal convergence, this class of sdRKDG schemes can typically accommodate much larger CFL numbers.
\end{enumerate}
Note that in both Class A and Class B, we need to ensure that $e_s = k$, as indicated in \eqref{eq:sdRKDG-last}, for optimal convergence in cases without sonic points.

For clarity, we provide a few examples of sdRKDG schemes based on commonly used RK schemes. Note that, in general, even with the same RK method, there can be different sdRKDG schemes arising from various combinations of $\dxDGk f$ and $\dxDGkm f$. There are plenty of other sdRKDG schemes besides what we have listed here. 

\begin{itemize}
    \item (Class A, second-order) Explicit midpoint rule:
    \[\begin{array}{c|cc:cc}
0 & 0 & 0 &&\\
1 / 2 & 1 / 2 & 0 & 0&\\
\hline & 0 & 1& &1
\end{array}\]\vspace{-0.3625cm}
\begin{subequations}\label{eq:midpoint}
	\begin{align}
		u_h^{(2)} =& u_h^n - \frac{\Delta t}{2} \dxDGa{0} f\left(u_h^n\right),\\
		u_h^{n+1} =& u_h^n - \Delta t \dxDGa{1} f\left(u_h^{(2)}\right).
	\end{align} 
\end{subequations}  
    \item (Class A, third-order) Heun's method: \[\begin{array}{c|ccc:ccc}
0 & 0 & 0 & 0 &&&\\
1 / 3 & 1 / 3 & 0 & 0 &1&&\\
2 / 3 & 0 & 2 / 3 & 0&&1& \\
\hline & 1 / 4 & 0 & 3 / 4&2&&2
\end{array}\]\vspace{-0.3625cm}
\begin{subequations}\label{eq:heun}
	\begin{align}
		&u_h^{(2)} = u_h^n - \frac{1}{3} \Delta t \dxDGa{1} f\left(u_h^n\right),\quad
		u_h^{(3)} = u_h^n - \frac{2}{3}\Delta t \dxDGa{1}f\left(u_h^{(2)}\right),\\
		&u_h^{n+1} = u_h^n - \Delta t \left(\frac{1}{4}\dxDGa{2} f\left(u_h^{n}\right) + \frac{3}{4}  \dxDGa{2} f\left(u_h^{(3)}\right)\right).
	\end{align}
\end{subequations}
\item (Class B, second-order) The second-order SSP-RK (SSP-RK2) method:
\[\begin{array}{c|cc:cc}
0 & 0 & 0&   &   \\
1 &1 & 0 & 0 &  \\
\hline & 1/2 & 1/2 & 0 & 1
\end{array}\]\vspace{-0.3625cm}
\begin{subequations}\label{eq:SSPRK2}
	\begin{align}
		& u_h^{(2)} = u_h^n -  \Delta t\dxDGa{0} f\left(u_h^n\right),\\
		& u_h^{n+1} = \frac{1}{2}u_h^n +\frac{1}{2}\left(u_h^{(2)}- \Delta t \dxDGa{1} f\left(u_h^{(2)}\right)\right).
	\end{align}
\end{subequations}

\item (Class B, third-order) The third-order SSP-RK (SSP-RK3) method:
\[\begin{array}{c|ccc:ccc}
0 & 0 & 0 & 0 &&&\\
1   & 1   & 0 & 0 &1&&\\
1/2& 1/4 &   1/4&0&1&2& \\
\hline & 1 / 6 & 1/6 & 2/3&1&2&2
\end{array}\]\vspace{-0.3625cm}
\begin{subequations}
\label{eq:SSPRK3}
	\begin{align}
		& u_h^{(2)} = u_h^n -   \Delta t\dxDGa{1}  f\left(u_h^n\right),\\
		& u_h^{(3)} = \frac{3}{4}u_h^n +\frac{1}{4}\left(u_h^{(2)}- \Delta t \dxDGa{2}  f\left(u_h^{(2)}\right)\right),\\
		& u_h^{n+1} = \frac{1}{3} u_h^n +\frac{2}{3} \left(u_h^{(3)}-  \Delta t\dxDGa{2}  f\left(u_h^{(3)}\right)  \right).
	\end{align}
\end{subequations}
\item (Class B, fourth-order) The classic fourth-order RK (RK4) method: (The detailed scheme is omitted to save space.)
\begin{equation}\label{eq:rk4}
\begin{array}{c|cccc:cccc}
0 & 0 & 0 & 0 & 0&&&& \\
1 / 2 & 1 / 2 & 0 & 0 & 0&2&&& \\
1 / 2 & 0 & 1 / 2 & 0 & 0 &&2&&\\
1 & 0 & 0 & 1 & 0 &&&2&\\
\hline & 1 / 6 & 1 / 3 & 1 / 3 & 1 / 6&2&2&2&3
\end{array}
\end{equation}
\end{itemize}

\begin{remark}[Implementation]\label{Implementation} 
The implementation of the sdRKDG method can be easily modified from the standard RKDG code. Due to the relationship \eqref{eq:dxpl}, the $\dxDGkm f$ operators can be implemented by avoiding the update of the $k$th-order polynomial coefficients in $\dxDGk f$ under a set of orthogonal basis. For example, for $\mathcal{P}^k$ elements in two dimensions, we need to loop from $1$ to $(k+2)(k+1)/2$ for polynomial coefficients of $\dxDGk f$, while we only need to loop from $1$ to $(k+1)k/2$ for polynomial coefficients of $\dxDGkm f$. One simply needs to change the range of the {\it for} loop in the modal implementation of the standard RKDG method. 
 \end{remark}
 \begin{remark}[Efficiency improvement]
 The efficiency improvement of the new sdRKDG method over the standard RKDG method can be from two aspects. Firstly, the sdRKDG method may allow a larger CFL number for time marching (see \cref{fourier analysis}), which results in fewer time step iterations. Secondly, there are fewer floating-point operations for $\dxDGkm f$ compared to $\dxDGk f$ as stated in \cref{Implementation}. In the $d$-dimensional case with $\mathcal{P}^k$ elements, the floating-point operations of $\dxDGkm f$ versus $\dxDGk f$ is roughly $\dim(\mathcal{P}^{k-1}): \dim(\mathcal{P}^{k})= k:d+k$. However, as for whether the saving in floating-point operations can lead to savings in CPU time, it may depend on the implementation and the vectorization of the compiler. For example, we have tested the scheme \eqref{eq:midpoint} with Intel Fortran Compiler Version 2021.6.0 in one dimension. In this scheme, we evaluate $\dxDGa{0}f$ and $\dxDGa{1}f$ instead of two $\dxDGa{1}f$. The theoretical estimate of the number of floating-point operations of \eqref{eq:midpoint} is $75\%$ of that of the standard RKDG2 scheme. Without the auto-vectorization, the ratio of the CPU time is around $77\%$. However, when the auto-vectorization is on, this ratio rises to around $93\%$. It may be left as a future research topic on how one can optimize from the compiler level to take advantage of the saving in the floating-point operations of the sdRKDG method. 

 \end{remark}
 
 \begin{remark}[Sonic points] Numerically, we observe that an sdRKDG scheme in Class B may only attain a suboptimal convergence rate for problems with sonic points. See \cref{ex:1dburgers}. Note that, unfortunately, all sdRKDG schemes based on SSP-RK formulations could fall into this category. Further investigation into the cause of the order degeneration and the appropriate fix will be postponed to our future works. 
\end{remark}

\section{Properties of the sdRKDG method}\label{sec:prop}
The sdRKDG method inherits several properties of the standard RKDG method, which are discussed in \Cref{sec:conservation} and \Cref{sec:tvdm}. Moreover, we have applied the standard von Neumann approach to analyze the stability and error of the sdRKDG schemes for the linear advection equation in \cref{fourier analysis}. 
\subsection{Local conservation and convergence}\label{sec:conservation}
As all stages in \eqref{eq:sdRKDG} are using conservative DG spatial discretizations, the fully discrete numerical scheme is still conservative. Therefore, similar to the standard RKDG method, we can prove a Lax–Wendroff type theorem under standard assumptions. 
\begin{theorem}\label{thm:lwthm}
	Consider the sdRKDG scheme \eqref{eq:sdRKDG} for the hyperbolic conservation laws \eqref{eq:conservation law} on quasi-uniform meshes with the following assumptions:
	\begin{enumerate}
		\item $f$ is Lipschitz continuous.
		\item The numerical flux $\widehat{f\cdot\nu_{e,K}}(u_h)$ has the following properties: 
		\begin{enumerate}
			\item Consistency: if $u_h = u_h^\inte = u_h^\ext$, then $\widehat{f\cdot\nu_{e,K}}(u_h)= {f(u_h)\cdot\nu_{e,K}}$.
			\item Lipschitz continuity: $|\widehat{f\cdot\nu_{e,K}}(u_h) -  \widehat{f\cdot\nu_{e,K}}(v_h)|\leq C\nm{u_h-v_h}_{L^\infty(\widehat{B}_K)}$, where $\widehat{B}_{K} = K\cup K^\ext$ is the union of $K$ and its neighboring cell $K^\ext$. 
		\end{enumerate}
		\item The CFL condition $\Delta t/h \leq \lambda_0$ holds, where $\lambda_0$ is a fixed constant. 
	\end{enumerate}
	If $u_h^{n}$ converges boundedly almost everywhere to a function $u$ as $\Delta t, h \to 0$, then the limit $u$ is a weak solution to the conservation laws \eqref{eq:conservation law}, namely
	\begin{equation*}
		\int_{\mathbb{R}^d}u_0 \phi\dd x + \int_{\mathbb{R}^d\times \mathbb{R}^+} u \phi_t \dd x \dd t+ \int_{\mathbb{R}^d\times \mathbb{R}^+}f(u) \cdot \nabla \phi \dd x\dd t= 0, \quad \forall  \phi\in C_0^\infty(\mathbb{R}^d\times\mathbb{R}^+).
	\end{equation*} 
 
\end{theorem}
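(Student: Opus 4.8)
The plan is to establish this Lax--Wendroff type theorem by showing that the fully discrete sdRKDG scheme admits a conservative cell-entry representation, and that as $\Delta t, h \to 0$ the associated numerical fluxes converge, in a weak sense, to the exact flux. The central structural observation to exploit is \eqref{eq:dxpl}, namely that $\dxDGkm f = \Pi_{k-1}\,\dxDGk f$. Since the cell averages are the $\mathcal{P}^0$ moments and $\Pi_{k-1}$ acts as the identity on $\mathcal{P}^0$ (indeed on all of $\mathcal{P}^{k-1}\supseteq \mathcal{P}^0$ for $k\ge 1$), the projection $\Pi_{k-1}$ leaves the cell-average update untouched. Thus the first step is to test \eqref{eq:sdRKDG} against $v_h \equiv 1$ on each cell $K$ and observe that the evolution of the cell averages $\overline{u}_K^n$ is \emph{identical} to that of the standard RKDG scheme regardless of the choices $d_{ij}, e_i \in \{k-1,k\}$. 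This reduces the theorem to the classical RKDG Lax--Wendroff argument applied to the cell-average dynamics.

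First I would write the update for a single cell average. Testing \eqref{eq:DG operator} (equivalently \eqref{eq:dxl}) with $v_h=1$ gives $|K|\,\overline{(\dxDGk f(u_h))}_K = \sum_{e\in\partial K}\int_e \widehat{f\cdot\nu_{e,K}}\,\dd l$, since the volume term $\int_K f(u_h)\cdot\nabla 1\,\dd x$ vanishes. Substituting the RK stages \eqref{eq:sdRKDG-in}--\eqref{eq:sdRKDG-last} and collecting terms, the one-step map for the cell average takes the conservative finite-volume form
\begin{equation*}
  |K|\,\overline{u}_K^{n+1} = |K|\,\overline{u}_K^{n} - \Delta t \sum_{e\in\partial K}\int_e \widehat{F}_{e,K}\,\dd l,
\end{equation*}
where $\widehat{F}_{e,K} = \sum_{i=1}^s b_i\,\widehat{f\cdot\nu_{e,K}}(u_h^{(i)})$ is a consistent numerical flux built from the stage values, and each stage value $u_h^{(i)}$ is itself a bounded, Lipschitz function of the neighboring data by assumptions (1) and (2b) together with the CFL condition (3). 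I would verify the flux is \emph{single-valued across each interior edge}, i.e. $\widehat{F}_{e,K} = -\widehat{F}_{e,K^{\ext}}$, which follows from the conservative property of the numerical flux $\widehat{f\cdot\nu_{e,K}} = -\widehat{f\cdot\nu_{e,K^{\ext}}}$; this is exactly the telescoping/conservation that makes the Lax--Wendroff machinery applicable.

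The remaining steps follow the standard Lax--Wendroff template. I would multiply the conservative update by a test function $\phi\in C_0^\infty$ sampled at cell centers and times $t^n$, sum over cells and time steps, and perform summation by parts in both space and time to transfer the differences onto $\phi$, producing discrete analogues of $\int u\,\phi_t$, $\int f(u)\cdot\nabla\phi$, and the initial term $\int u_0\,\phi$. The passage to the limit then rests on two ingredients: the bounded a.e. convergence $u_h^n \to u$ supplied in the hypothesis, which via dominated convergence handles the cell-average and time-derivative terms; and the \emph{consistency and Lipschitz continuity} of the numerical flux (assumption 2), which guarantees that $\widehat{F}_{e,K} \to f(u)\cdot\nu$ in the appropriate averaged sense as the stage values collapse to the common limit $u$ on each edge. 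I would also need the consistency of the RK weights, $\sum_i b_i = 1$, so the numerical flux is correctly normalized to the physical flux.

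The main obstacle I anticipate is controlling the \emph{stage values} $u_h^{(i)}$ and their edge traces: unlike the single-operator RKDG case, the intermediate stages here are formed by mixing $\dxDGk f$ and $\dxDGkm f$, so I must show that each stage value still remains a bounded, Lipschitz-stable perturbation of $u_h^n$ that converges a.e. to the same limit $u$. The key leverage is again \eqref{eq:dxpl}: because $\Pi_{k-1}$ is an $L^2$-projection, it is a bounded operator (uniformly in $h$ on quasi-uniform meshes, with constant independent of $h$), so replacing $\dxDGk f$ by $\dxDGkm f$ at any stage does not inflate the relevant norms. Combined with the CFL condition bounding $\Delta t/h$ and an inverse inequality to pass between $L^2$ and $L^\infty$ control of the traces, this should yield the uniform boundedness and the convergence of all stage edge-values to $u$, closing the argument. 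The technical care is in tracking these projection and inverse-inequality constants through the nested RK stages, but no genuinely new difficulty beyond the standard RKDG proof is expected, precisely because the cell-average update is unchanged.
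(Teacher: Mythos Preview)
Your proposal is correct and follows essentially the same route as the paper: derive a conservative cell-average update by testing against $1_K$ (using that $\Pi_{k-1}$ fixes $\mathcal{P}^0$ moments), then verify consistency, anti-symmetry, and Lipschitz boundedness of the resulting edge flux $\sum_i b_i\,\widehat{f\cdot\nu_{e,K}}(u_h^{(i)})$ so that the classical Lax--Wendroff machinery applies. The paper packages the latter step as an inductive $L^\infty$ Lipschitz estimate $\nm{u_h^{(i)}-v_h^{(i)}}_{L^\infty}\leq C\nm{u_h^n-v_h^n}_{L^\infty(B_K)}$ over the full stencil $B_K$ (growing with the stage index), obtained via an inverse-inequality lemma bounding $\nm{\nabla_\ell^{\DG}\cdot f(u_h)-\nabla_\ell^{\DG}\cdot f(v_h)}_{L^\infty(K)}\leq Ch^{-1}\nm{u_h-v_h}_{L^\infty(\widehat{B}_K)}$ together with the CFL condition---exactly the ingredients you anticipate in your final paragraph.
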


A sketch of the proof is given in \Cref{sec:conservation-proof}. The proof essentially follows the arguments in \cite[Appendix A]{chen2023runge}, with some additional technicalities in \cref{lem:lip}.
\begin{remark}
    As a corollary, the proof of \cref{thm:lwthm} also shows that the standard RKDG method in Butcher form, if it converges boundedly, will coverge to the weak solution of the conservation laws. 
\end{remark}

\subsection{TVDM property and maximum principle} \label{sec:tvdm}
Note that the forward Euler steps $u_h^{(i)} - \gamma_i \Delta t\dxDGkm f(u_h^{(i)})$ and $u_h^{(i)} - \gamma_i \Delta t \dxDGk f(u_h^{(i)})$ are identical in updating the cell average values. As a result, with the minmod limiter in \cite{rkdg2}, one can achieve the TVDM property for both forward Euler schemes per Harten's lemma; and with the scaling limiter in \cite{zhang2010maximum}, one can enforce the maximum principle. Therefore, if the sdRKDG scheme can be written as a convex combination of these first-order forward Euler steps, or equivalently, it is based on an SSP-RK method such as  \eqref{eq:SSPRK2} and \eqref{eq:SSPRK3},  then this sdRKDG scheme will maintain the TVDM property and preserve the maximum principle after applying appropriate limiters.  
\begin{theorem}\label{thm:tvdm}
    Consider the sdRKDG scheme for the scalar hyperbolic conservation laws in one dimension with periodic or compactly supported boundary conditions. If the sdRKDG scheme can be formulated as a convex combination of $u_h^{(i)} - \gamma_i \Delta t\dxDGkm f(u_h^{(i)})$ and $u_h^{(i)} - \gamma_i \Delta t \dxDGk f(u_h^{(i)})$,  then after each forward Euler step:
    \begin{enumerate}
        \item by applying the minmod limiter in \cite{rkdg2}, the solution $u_h^n$ of the sdRKDG scheme is TVDM, namely, $\mathrm{TVM}(u_h^{n+1})\leq \mathrm{TVM}(u_h^n)$, 
    where 
    $\mathrm{TVM}(v_h) = \sum_j |\bar{v}_{h,j+1} - \bar{v}_{h,j}|$ and $\bar{v}_{h,j}$ is the cell average of $v_h$ on the $j$th mesh cell.
    \item by applying the scaling limiter in \cite{zhang2010maximum}, the solution $u_h^n$ of the sdRKDG scheme preserves the maximum principle under a time step constraint, namely, if $u_h^{n} \in [m,M]$ then $u_h^{n+1} \in [m,M]$.
    \end{enumerate} 
\end{theorem}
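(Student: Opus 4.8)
The plan is to reduce the statement to a single forward Euler step and then invoke the cell-average analysis underlying the classical TVDM and maximum-principle results. By hypothesis, the sdRKDG update is a convex combination of the two building blocks $u_h^{(i)} - \gamma_i \Delta t\dxDGkm f(u_h^{(i)})$ and $u_h^{(i)} - \gamma_i \Delta t\dxDGk f(u_h^{(i)})$, each followed by the limiter. Since $\mathrm{TVM}(\cdot)$ is a seminorm on cell-average data, it is subadditive and positively homogeneous, so the $\mathrm{TVM}$ of a convex combination is controlled by the convex combination of the individual $\mathrm{TVM}$'s; likewise the interval $[m,M]$ is convex, so a convex combination of functions valued in $[m,M]$ is again valued in $[m,M]$. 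By the standard SSP induction over the stages, it therefore suffices to prove that each individual forward Euler step, together with the corresponding limiter, is TVDM and preserves the maximum principle.

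The crux is the following cell-average identity. Testing both \eqref{eq:DG operator} and \eqref{eq:dxl} against the constant $v_h \equiv 1$, which lies in $\mathcal{V}_h^{k-1} \subseteq \mathcal{V}_h^k$ for $k \geq 1$, the volume term $\int_K f(u_h)\cdot\nabla v_h\,\dd x$ vanishes and both operators return the same cell-average value
\begin{equation*}
\frac{1}{|K|}\int_K \dxDGkm f(u_h)\,\dd x = \frac{1}{|K|}\int_K \dxDGk f(u_h)\,\dd x = \frac{1}{|K|}\sum_{e\in\partial K}\int_e \widehat{f\cdot\nu_{e,K}}\,\dd l,
\end{equation*}
consistent with \eqref{eq:dxpl} and the fact that $\Pi_{k-1}$ preserves cell averages. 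Consequently the two forward Euler steps produce identical cell averages; in one dimension both reduce the cell-average evolution to the conservative flux-difference form $\bar{u}_{h,j} \mapsto \bar{u}_{h,j} - \frac{\gamma_i\Delta t}{\Delta x_j}(\widehat{f}_{j+1/2}-\widehat{f}_{j-1/2})$ with the monotone Lax--Friedrichs flux. The choice of output space $\mathcal{V}_h^{k-1}$ versus $\mathcal{V}_h^k$ is thus invisible at the level of cell averages.

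With this identity in hand, I would treat the two limiters in turn. For the TVDM claim, the minmod limiter of \cite{rkdg2} depends only on the cell averages of the current and neighboring cells and returns a monotonicity-preserving reconstruction; combined with the flux-difference cell-average update above and the CFL restriction, Harten's lemma yields $\mathrm{TVM}(u_h^{n+1})\leq\mathrm{TVM}(u_h^n)$ for each forward Euler step exactly as in the standard $\dxDGk f$ case. For the maximum-principle claim, the scaling limiter of \cite{zhang2010maximum} rescales each polynomial about its (unchanged) cell average so that its values at the Gauss--Lobatto quadrature nodes lie in $[m,M]$; under the associated time-step constraint, the updated cell average is a convex combination of these nodal values and therefore remains in $[m,M]$. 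In both cases the argument uses only cell-average data and the flux-difference update, which are independent of whether $\dxDGk f$ or $\dxDGkm f$ is used, so the classical proofs carry over verbatim.

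The point to verify carefully---rather than a genuine obstacle---is that both limiters are driven purely by cell-average information and are applied to the degree-$k$ iterate irrespective of the polynomial degree of the operator's output. Once the cell-average identity is established, no estimate specific to the sdRKDG hybridization is required: the remaining work is simply to quote Harten's lemma and the Zhang--Shu convex-combination bound, and then to reassemble the full scheme through the convexity reduction of the first paragraph.
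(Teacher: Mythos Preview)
Your proposal is correct and follows essentially the same approach as the paper: the paper notes that the two forward Euler steps update cell averages identically and then simply cites the standard SSP-RKDG arguments in \cite{shu2009discontinuous,zhang2010maximum}, omitting the proof. Your write-up is in fact more explicit than the paper's, spelling out the cell-average identity via testing against $v_h\equiv 1$ and the convexity reduction, but the underlying logic is the same.
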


The proof of \cref{thm:tvdm} is identical to that of the standard SSP-RKDG scheme as that in \cite[Section 3.2.2]{shu2009discontinuous} and \cite[Section 2]{zhang2010maximum}, and is hence omitted. 
\begin{remark}
    In practice, we often use the total variation bounded (TVB) limiter instead of the TVD limiter. It recovers high-order accuracy at smooth extrema that is failed by the TVD limiter, while maintaining total variation boundedness in the means (TVBM). Such TVBM property can also be preserved by the above-mentioned sdRKDG schemes.
\end{remark}

\subsection{Von Neumann Analysis}
\label{fourier analysis}

In this section, we apply the von Neumann analysis, also known as the Fourier-type analysis \cite{zhong2013simple}, to study the linear stability and numerical error of the new sdRKDG method with the upwind flux. 
 
\subsubsection{General settings and methodologies} Consider the linear advection equation in one dimension with periodic boundary condition
\begin{equation}
	\begin{aligned}
		u_t+  u_x=&0, \quad x \in[0,2 \pi] \quand t>0.
	\end{aligned}
\label{eq:linear wave for fourier}
\end{equation}

Let $[0,2\pi]=\bigcup_{j = 0}^{N-1} I_j$, with $I_j=(x_j - h/2, x_j + h/2)$, $x_j = jh$ and $h = 2\pi/N$, be a uniform partition of the domain. As before,  the approximation space for the solution is $
\mathcal{V}_h^k=\left\{v_h:\left.v_h\right|_{I_j} \in \mathcal{P}^k\left(I_j\right),  0 \leq j \leq N-1\right\}$. The functions $v_h\in \mathcal{V}_h^k$ can be represented as 
\begin{equation*}
v_h(x)=\sum_{l=0}^k v_{l, j} \phi_{l, j}(x), \quad x \in I_j,
\end{equation*}
where $\{\phi_{l, j}(x)\}_{l=0}^k$ is a set of orthonormal basis of $\mathcal{P}^k(I_j)$ and $v_{l, j}$ are the coefficients. We denote by $\boldsymbol{v}_{j} = (v_{0,j},\cdots, v_{k,j})^\intercal$ and will use similar notations for $\boldsymbol{u}_j(t)$ and $\boldsymbol{u}_j^n$, which represent the coefficients of $u_h(t)$ and $u_h^n$ in the cell $I_j$. 

We first review the von Neumann analysis of a standard RKDG scheme. With the upwind flux, the semi-discrete DG scheme \eqref{eq:semiDG} can be written as 
\begin{equation}\label{eq:matrixeq}
\frac{\dd }{\dd t}\boldsymbol{u}_j=\frac{1}{h}\left(C_{-1}\boldsymbol{u}_{j-1} + C_0 \boldsymbol{u}_j \right),
\end{equation}
where $C_{-1}$ and $C_0$ are $(k+1) \times(k+1)$ constant matrices. Note \eqref{eq:matrixeq} can be formally seen as a semi-discrete finite difference scheme. Then one can apply the classical von Neumann analysis. To this end, we assume the ansatz with the wave number $\omega$
\begin{equation*}
\boldsymbol{u}_j(t)=\hat{\boldsymbol{u}}(t) \exp \left(\mathrm{i} \omega x_j\right), \quand  \mathrm{i} \text{ is the imaginary unit.}
\end{equation*}
Now \eqref{eq:matrixeq} gives a simple ODE system
\begin{equation*}
\frac{\dd }{\dd t}{\hat{\boldsymbol{u}}}=D_k {\hat{\boldsymbol{u}}},\quad \text{with}\quad D_k = \frac{1}{h}\left(C_{-1} e^{-\mathrm{i}\xi}+C_0\right)\quand \xi = wh. 
\end{equation*}
Applying the time discretization, the ansatz coefficients of the standard RKDG scheme admit the following relationship: 
	\begin{align*}
		\hat{\boldsymbol{u}}^{(i)} =& \hat{\boldsymbol{u}}^n -  \Delta t\sum_{j = 1}^{i-1} a_{ij} D_k \hat{\boldsymbol{u}}^{(i)}, \quad i = 1,\cdots, s,\\
		\hat{\boldsymbol{u}}^{n+1} =& \hat{\boldsymbol{u}}^n - \Delta t\sum_{i = 1}^{s} b_i D_k \hat{\boldsymbol{u}}^{(i)}.
	\end{align*}
We can write it in the short form $\hat{\boldsymbol{u}}^{n+1} = R_k\hat{\boldsymbol{u}}^{n}$. 

Next, we consider the von Neumann analysis for the sdRKDG scheme. In above, $D_k$ is a $(k+1)\times (k+1)$ matrix that corresponds to the operator $\dxDGk f$. For the sdRKDG method, we will also need to introduce a matrix $D_{k-1}$ for the operator $\dxDGkm f$. From \eqref{eq:dxpl}, it can be seen that 
\begin{equation*}
    D_{k-1} = \diag(\underbrace{1,\cdots,1}_{k},0) D_k.
\end{equation*}
With this new operator, we have 
	\begin{align*}
		\hat{\boldsymbol{u}}^{(i)} =& \hat{\boldsymbol{u}}^n -  \Delta t\sum_{j = 1}^{i-1} a_{ij} D_{d_{ij}} \hat{\boldsymbol{u}}^{(i)},\qquad  d_{ij} \in \{k-1,k\}, \quad i = 1,\cdots, s,\\
		\hat{\boldsymbol{u}}^{n+1} =& \hat{\boldsymbol{u}}^n - \Delta t\sum_{i = 1}^{s} b_i D_{e_i} \hat{\boldsymbol{u}}^{(i)},\qquad  e_{i} \in \{k-1,k\} \text{ for } i<s \quand e_s = k.
	\end{align*}
This relation can be written in the short form as $\hat{\boldsymbol{u}}^{n+1} = R\hat{\boldsymbol{u}}^{n}$. Note that $R = R(\lambda,\xi)$ depends on two parameters, $\lambda = \Delta t/h$ and $\xi = \omega h$. 

For stability analysis in \Cref{sec:cfl}, we want to find the CFL number $\lambda_0$ such that the spectral radius of $R$ satisfies $\rho(R(\lambda,\xi)) \leq 1$ for all $\xi$. In other words, 
\begin{equation}\label{eq:lambda0}
    \lambda_0 = \sup_{\lambda\geq 0} \{\lambda:\sup_{\xi} \rho(R(\tilde{\lambda},\xi)) \leq 1, \,\, \forall \tilde{\lambda}\leq \lambda\}.
\end{equation}

For error analysis in \Cref{sec:full discrete}, we describe  the procedures for the $\mathcal{P}^1$ DG method as an example and adopt the normalized Legendre basis, 
\begin{equation*}
    \phi_{0,j}(x)=\frac{1}{\sqrt{h}}\quand  \phi_{1,j}(x)=\sqrt{\frac{{3}}{h}}\frac{x-x_j}{h/2}.
\end{equation*}
We would like to use the interpolated values at $x_{j\pm 1/4}$ as the initial condition and evaluate the error at the final time $t^n = n\Delta t$ also at $x_{j\pm 1/4}$. Note when we take $j = 0,\cdots, N-1$, and this will return the error on a uniform grid. 

Let us consider \eqref{eq:linear wave for fourier} with the initial condition $u(x,0) = \exp(\mathrm{i}\omega x)$. Its exact solution is 
\begin{equation}\label{eq:ujp1/4}
\begin{pmatrix}
u\left(x_{j-\frac{1}{4}},t^n\right)   \\
u\left(x_{j+\frac{1}{4}},t^n\right)    
\end{pmatrix}
 = 
 \begin{pmatrix}
     e^{-\mathrm{i}\frac{\xi}{4}}\\
     e^{\mathrm{i}\frac{\xi}{4}}
 \end{pmatrix}e^{\mathrm{i}(\omega x_{j} - t^n)}.    
\end{equation}
To obtain the numerical solution, we introduce the Vandermonde matrix 
\begin{equation*}
    V = \begin{pmatrix}
    \phi_{0,j}\left(x_{j-\frac{1}{4}}\right) & \phi_{1,j}\left(x_{j-\frac{1}{4}}\right)\\
    \phi_{0,j}\left(x_{j+\frac{1}{4}}\right) & \phi_{1,j}\left(x_{j+\frac{1}{4}}\right)
\end{pmatrix}.
\end{equation*}
Recall that $\hat{\boldsymbol{u}}^n =  R^n \hat{\boldsymbol{u}}^0$. By transforming back and forth between the nodal values and the modal coefficients, we have 
\begin{equation}\label{eq:uhjp1/4}
\begin{aligned}
    \begin{pmatrix}
    u_{j-\frac{1}{4}}^n\\
    u_{j+\frac{1}{4}}^n
\end{pmatrix}
        = V\boldsymbol{u}_j^n = V \hat{\boldsymbol{u}}^n
         e^{\mathrm{i}\omega x_{j}}
    = V R^n \hat{\boldsymbol{u}}^0e^{\mathrm{i}\omega x_{j}}= V R^n V^{-1}
    \begin{pmatrix}
     e^{-\mathrm{i}\frac{\xi}{4}}\\
     e^{\mathrm{i}\frac{\xi}{4}}        
    \end{pmatrix}e^{\mathrm{i}\omega x_{j}}.
\end{aligned}
\end{equation}
Consider the eigen-decomposition $R = Q\Lambda Q^{-1}$ with $\Lambda$ being a diagonal matrix. Then $R^n = Q\Lambda^nQ^{-1}$ can be computed. Subtracting \eqref{eq:ujp1/4} from \eqref{eq:uhjp1/4}, it gives
\begin{equation*}
\begin{aligned}
\begin{pmatrix}
    \varepsilon_{j-\frac{1}{4}}\\
    \varepsilon_{j+\frac{1}{4}}
\end{pmatrix}=
    \begin{pmatrix}
    u_{j-\frac{1}{4}}^n-u\left(x_{j-\frac{1}{4}},t^n\right)\\
    u_{j+\frac{1}{4}}^n-u\left(x_{j+\frac{1}{4}},t^n\right)
\end{pmatrix}
    = \left(V Q\Lambda^n(VQ)^{-1} -e^{-\mathrm{i}t^n} I\right)
    \begin{pmatrix}
     e^{-\mathrm{i}\frac{\xi}{4}}\\
     e^{\mathrm{i}\frac{\xi}{4}}
    \end{pmatrix}e^{\mathrm{i}\omega x_{j}}.
\end{aligned}
\end{equation*}
Note that $\varepsilon_{j\pm 1/4} = \varepsilon_{\pm 1/4} e^{\mathrm{i}\omega (x_j-x_0)}$. Hence one can use 
\begin{equation}\label{eq:globerr} 
{\varepsilon}_{\star,1} :=\max_{0\leq j \leq N-1} \left|\varepsilon_{j\pm\frac{1}{4}}\right|=\max_{0\leq j \leq N-1} \left|\varepsilon_{\pm 1/4} e^{\mathrm{i}\omega (x_j-x_0)}\right| = \max \left\{\left|\varepsilon_{-\frac{1}{4}}\right|, \left|\varepsilon_{\frac{1}{4}}\right|\right\}
\end{equation}
as a prediction of the numerical error. For the $\mathcal{P}^2$ case, we can similarly compute $\varepsilon_{-1/3}$, $\varepsilon_0$, and $\varepsilon_{1/3}$, and use the following value
as a prediction of the numerical error 
\begin{equation*}
    \varepsilon_{\star,2} = \max_{0\leq j\leq N-1}\left\{ \left|\varepsilon_{-\frac{1}{3}}\right|, \left|\varepsilon_{0}\right|, \left|\varepsilon_{\frac{1}{3}}\right|\right\}.
\end{equation*}

\subsubsection{CFL conditions for stability}\label{sec:cfl} 

In this section, we present the CFL condition for linear stability of several sdRKDG schemes following the method in \eqref{eq:lambda0}. 

\emph{Generic second-order method.} We first consider schemes with a generic second-order RK time discretization with different combinations of the stage operators.  
\begin{subequations}
\begin{alignat}{2}
\label{genericRk2}
&\text{v1: }\begin{tabular}{c|cc:cc}
0 & 0 & 0&   &   \\
$\alpha$ & $\alpha$ & 0 & $0$ &  \\
\hline & $1-\frac{1}{2 \alpha}$ & $\frac{1}{2 \alpha}$ & $0$ & $1$
\end{tabular}
\qquad
&\text{v2: }
\begin{tabular}{c|cc:cc}
0 & 0 & 0&   &   \\
$\alpha$ & $\alpha$ & 0 & $0$ &  \\
\hline & $1-\frac{1}{2 \alpha}$ & $\frac{1}{2 \alpha}$ & $1$ & $1$
\end{tabular}
\\
&\text{v3: }
\begin{tabular}{c|cc:cc}
0 & 0 & 0&   &   \\
$\alpha$ & $\alpha$ & 0 & $1$ &  \\
\hline & $1-\frac{1}{2 \alpha}$ & $\frac{1}{2 \alpha}$ & $0$ & $1$
\end{tabular}
\qquad 
&\text{v4: }
\begin{tabular}{c|cc:cc}
0 & 0 & 0&   &   \\
$\alpha$ & $\alpha$ & 0 & $1$ &  \\
\hline & $1-\frac{1}{2 \alpha}$ & $\frac{1}{2 \alpha}$ & $1$ & $1$
\end{tabular}
\end{alignat}
\end{subequations}
Here $\alpha$ is a parameter. In particular, when $\alpha=1$, the RK scheme corresponds to the SSP-RK2 method; when $\alpha={1}/{2}$, the RK scheme corresponds to the explicit midpoint rule. Moreover, the methods v1 and v3 belong to Class B, the methods v2 and v4 belong to Class A, and the method v4 retrieves the standard RKDG method. 

On the left of \cref{fig:CFL}, we plot the maximum CFL number versus the parameter $\alpha$. Note that when $e = (1,1)$, namely for v2 and v4, the CFL number is a constant $0.333$ and is independent of $\alpha$  due to the same matrix $R$ regardless of $\alpha$. The scheme v1 admits a larger maximum CFL number compared to the standard RKDG method. For example, when $\alpha = 1$, the v1 sdRKDG scheme admits the CFL number $0.566$  and is around $70\%$ larger than that of the standard RKDG method. The maximum CFL numbers of v3 sdRKDG schemes are typically smaller than those of the standard RKDG schemes.  

\emph{Third-order method.} The third-order RK method is typically parameterized by more than one parameter, and there are $2^5 = 32$ different possible combinations of the stage operators. Here as an example, we consider a family of RK methods in \cite{sanderse2019constraint} and consider two different combinations of the stage operators:
\begin{equation*}
\begin{array}{c|ccc}
0 & 0 & 0 & 0 \\
\alpha & \alpha & 0 & 0\\
1 & 1+\frac{1-\alpha}{\alpha(3 \alpha-2)} & -\frac{1-\alpha}{\alpha(3 \alpha-2)} & 0 \\
\hline & \frac{1}{2}-\frac{1}{6 \alpha} & \frac{1}{6 \alpha(1-\alpha)} & \frac{2-3 \alpha}{6(1-\alpha)}
\end{array}\qquad 
\text{v1: } 
\begin{array}{c:ccc}
&   &   &   \\
& 2 &   &   \\
& 1 & 1 &  \\
 \hline
& 2 & 2 & 2
\end{array}
\qquad
\text{v2: }
\begin{array}{c:ccc}
&   &   &   \\
& 1 &   &   \\
& 1 & 1 &  \\
\hline
& 1 & 1 & 2
\end{array}    
\end{equation*}
The method v1 belongs to Class A, and v2 belongs to Class B.
The maximum CFL number versus the parameter $\alpha$ is plotted on the right of \cref{fig:CFL}. It can be seen that, on the region $-0.5\leq \alpha\leq 0.6$, the CFL number of the standard RKDG scheme is $0.209$ and is independent of $\alpha$, the CFL number of the v1 sdRKDG scheme attains its maximum $0.262$ when $\alpha = 0.15$, and  the CFL number of the v2 sdRKDG scheme attains $0.333$ when $\alpha = -0.5$. The improvement of the CFL number can be up to $30\%$ and $60\%$ for v1 and v2 sdRKDG schemes, respectively.  
\begin{figure}[!ht] 
	\centering
 \begin{subfigure}[t]{.45\textwidth}
			\centering
			 \includegraphics[trim=0cm 0cm 0cm 0cm,width=0.8\textwidth]{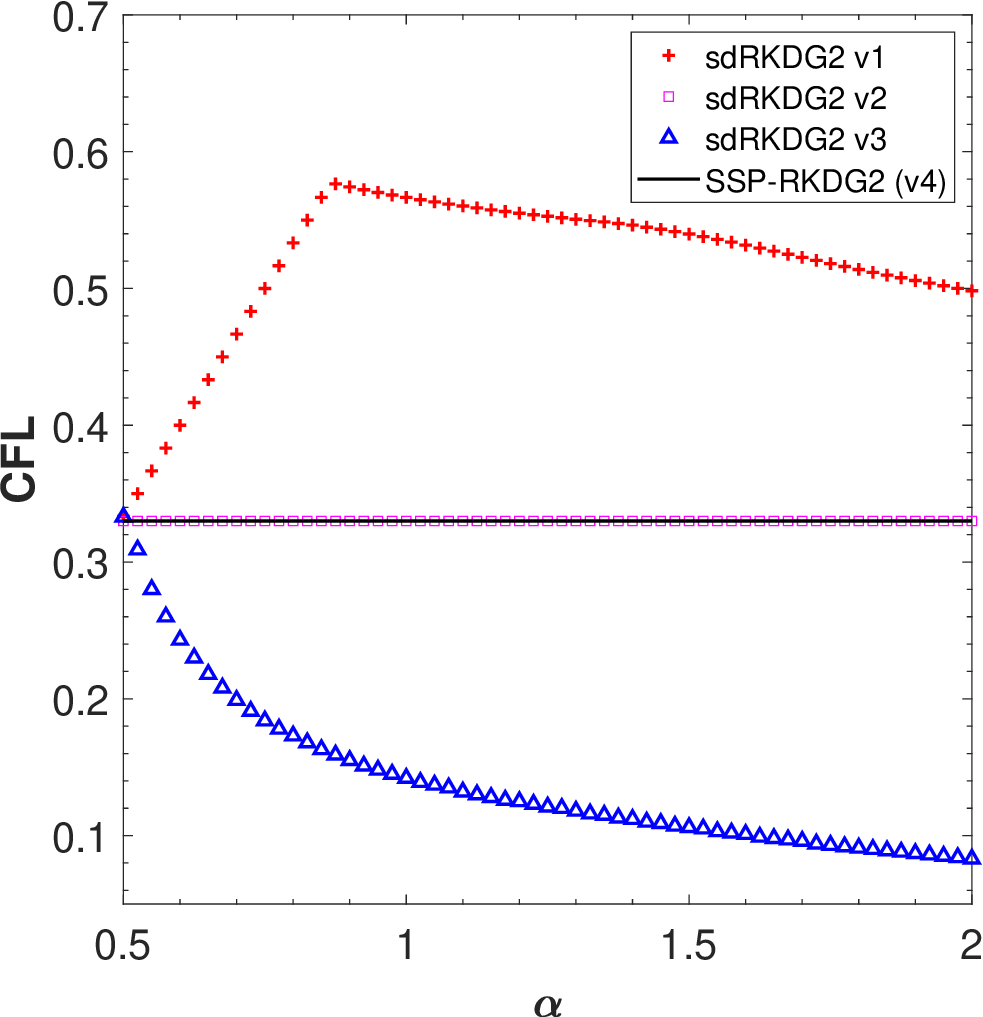}	 
		\end{subfigure}%
		\hspace{0mm}
		\begin{subfigure}[t]{.45 \textwidth}
  \centering
			\includegraphics[trim=0cm 0cm 0cm 0cm,width=0.8\textwidth]{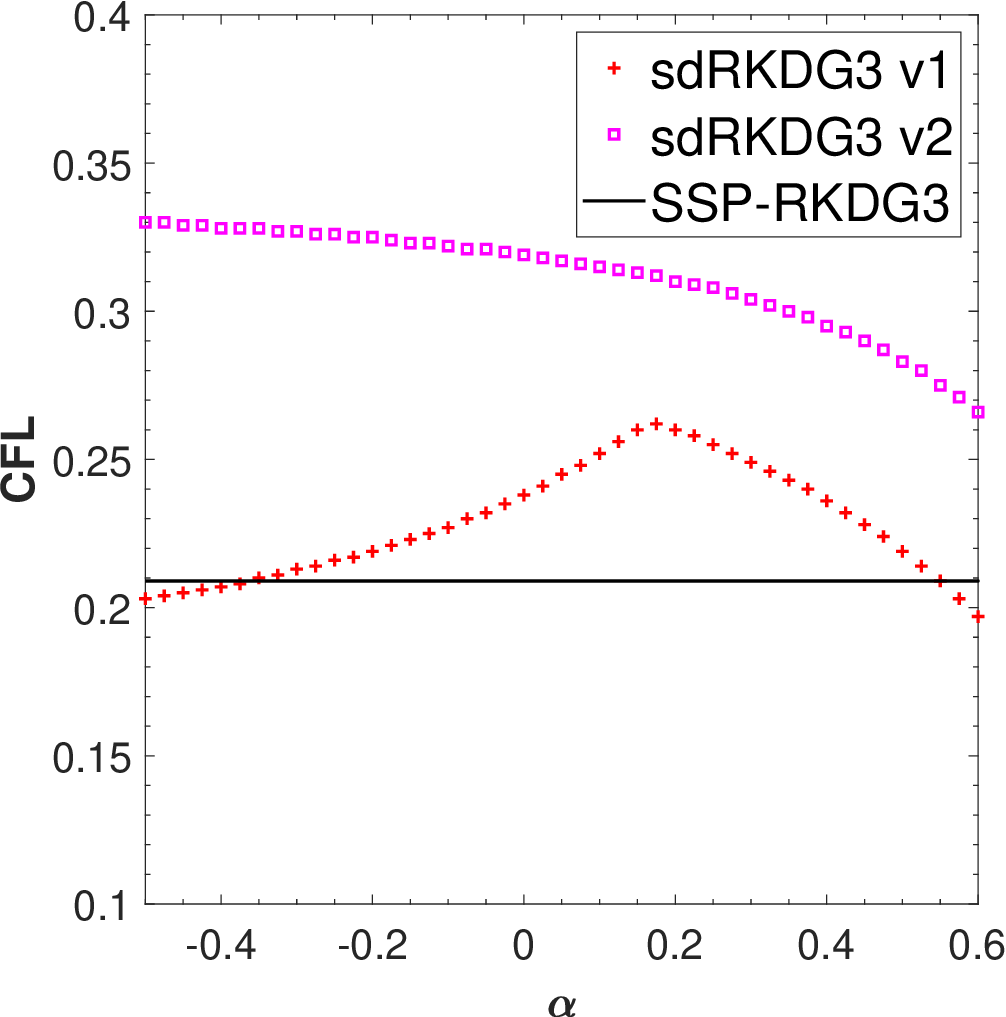}
		\end{subfigure} 
  \caption{CFL number of second-order (left) and  third-order (right) sdRKDG schemes.  }\label{fig:CFL}\vspace{-0.75cm}
      \end{figure}

\emph{Other methods.} Here we also provide the  CFL numbers for a few other sdRKDG schemes we have discussed.

For the third-order Heun's method, the sdRKDG scheme \eqref{eq:heun} (Class A) has the  CFL number $0.191$, and that of the corresponding standard RKDG method is $0.209$. 

For the SSP-RK3 method, the sdRKDG scheme \eqref{eq:SSPRK3} (Class B) has the  CFL number $0.275$, and that of the corresponding standard RKDG method is $0.209$. 

For the classic RK4 method, the sdRKDG scheme \eqref{eq:rk4} (Class B) has the  CFL number $0.213$, and that of the corresponding standard RKDG method is $0.145$. 
 
\subsubsection{Fully discrete error estimate}\label{sec:full discrete}

Here we take the generic second-order method v1 in \eqref{genericRk2} as an example. By a   Taylor expansion through Mathematica, the error between the DG solution by scheme v1  and the exact solution at time $t$ is 
\begin{equation}\label{eq:vareps1/4}
\begin{aligned}
&\varepsilon_{\frac{1}{4}}=  -\frac{1}{12}\xi^2 \mathrm{i} e^{-\mathrm{i} \omt} ((1-3 \cfl+2 \cfl^2) \omt+\alpha (-1+3 \cfl) (\mathrm{i}+2 \omt))+\mathcal{O}(\xi^3)+
 \\&
\left(1-\frac{3\cfl}{\alpha}\right)^{\frac{{t}}{\cfl h}}e^{\frac{3 \mathrm{i} \omt}{2\alpha-6 \cfl}+\mathcal{O}(\xi)}\left(\frac{\alpha}{12}\xi^2  (1-3\cfl) -\mathrm{i} \xi^3(\frac{1}{64}+\frac{1}{24}\alpha(\cfl-1)+\frac{1}{12}\alpha^2(3\cfl-1)) \right).
\end{aligned}
\end{equation} 
where $\xi=\omega h$ and $\omt =\omega t$. 

Assuming $0<\cfl/\alpha<{2}/{3}$, in this case, the term with the coefficient $\left(1-3 \cfl/\alpha\right)^{\frac{  t}{\cfl h}}$ goes to $0$ as $h \rightarrow 0$. After dropping this term in \eqref{eq:vareps1/4}, we have
\begin{equation*}
\left|\varepsilon_{\frac{1}{4}}\right| = \frac{\sqrt{\alpha^2(1 - 3 \cfl)^2 + (1 -2\alpha- 3 \cfl+6\alpha\cfl + 2 \cfl^2)^2 \omt^2}}{12} \xi^2  + \mathcal{O}(\xi^3),
\end{equation*}
Similarly, we can compute $ \left|\varepsilon_{-{1}/{4}}\right|$. Therefore   when $\xi$ is small and $0<\cfl/\alpha<{2}/{3}$, we have an estimate of the numerical error of the scheme \eqref{eq:SSPRK2} for \eqref{eq:linear wave for fourier}:
\begin{equation}
\begin{aligned}
\varepsilon_{\star,1}= \max \left\{\left|\varepsilon_{-\frac{1}{4}}\right|, \left|\varepsilon_{\frac{1}{4}}\right|\right\}\simeq& \frac{\sqrt{\alpha^2(1 - 3 \cfl)^2 + (1 -2\alpha- 3 \cfl+6\alpha\cfl + 2 \cfl^2)^2 \omt^2}}{12} \xi^2. 
\end{aligned}
\label{eq:error_rk2}
\end{equation}
It can be seen that $\varepsilon_{\star,1} = \mathcal{O}(\xi^2)$ and the method is second-order accurate. Here we only keep the leading second-order term 
in $\xi$ for clarity. But in the numerical examples, we also include the $\xi^3$ term when computing the predicted numerical error as a reference. 

To better visualize the analytical results, in Figure \ref{pic:error rk2},  we plot  the  leading coefficients in \eqref{eq:error_rk2} versus the CFL number with respect to different $\alpha$ and  $\omt$. 

\begin{figure}[h!]
            \centering
		\begin{subfigure}[t]{.31\textwidth}
			\centering
			\includegraphics[trim=0cm 1cm 0cm 1cm, width=1. \linewidth]{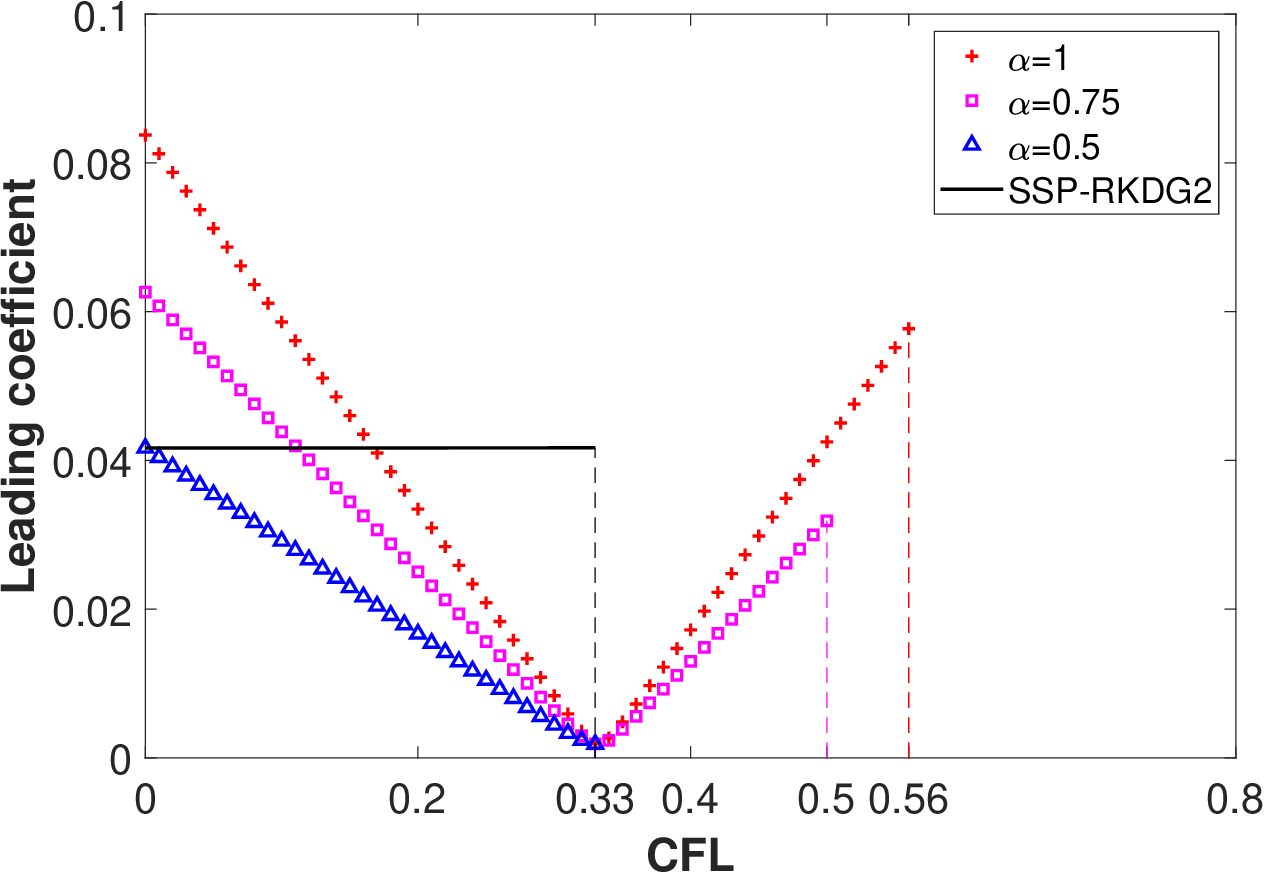}
			\caption{$\omt=0.1$ }
		\end{subfigure}%
		\hspace{0mm}
		\begin{subfigure}[t]{.31 \textwidth}
			\centering
			\includegraphics[trim=0cm 1cm 0cm 1cm, width=1. \linewidth]{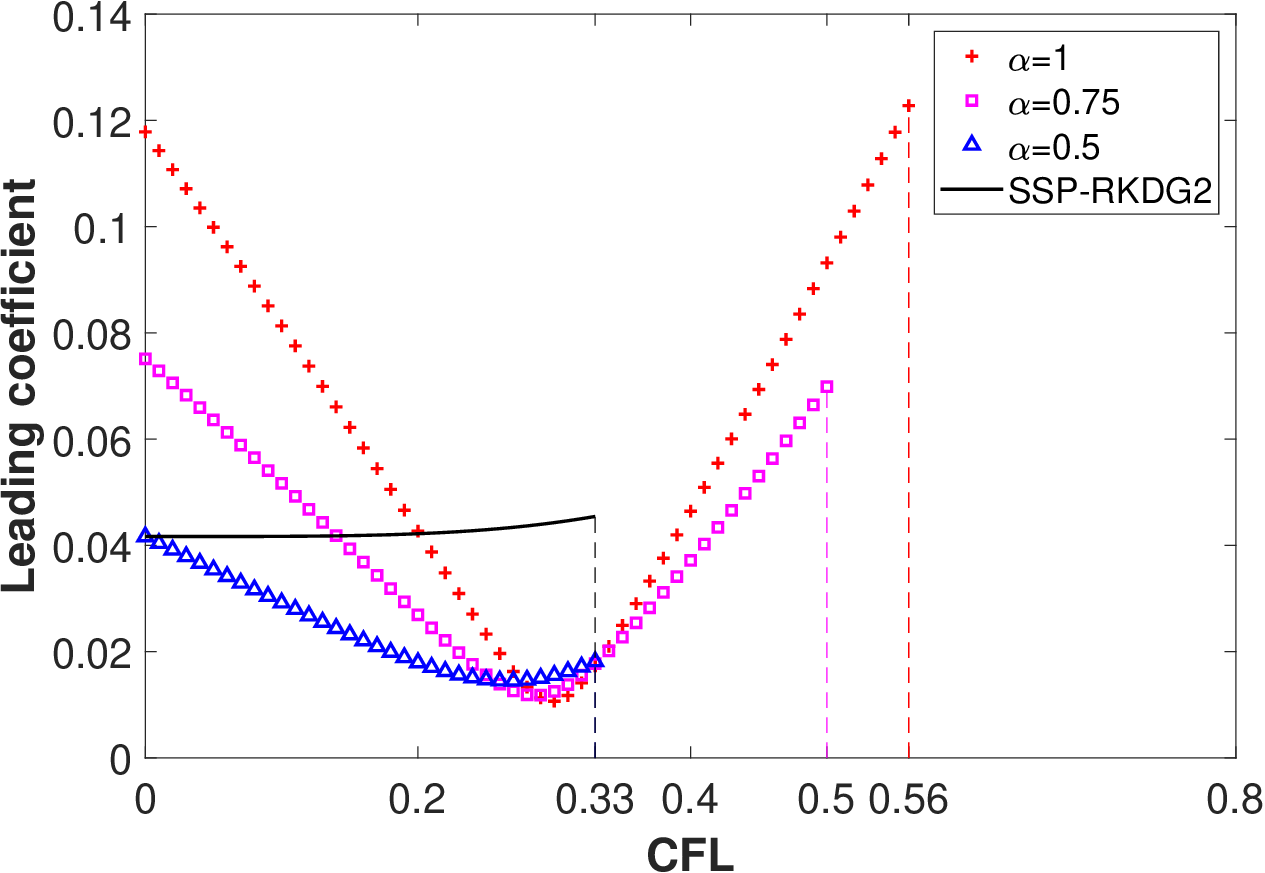}
			\caption{$\omt=1$}
			\label{fig:sfig2}
		\end{subfigure} 
  \hspace{0mm}
		\begin{subfigure}[t]{.31\textwidth}
			\centering
			\includegraphics[trim=0cm 1cm 0cm 1cm, width=1. \linewidth]{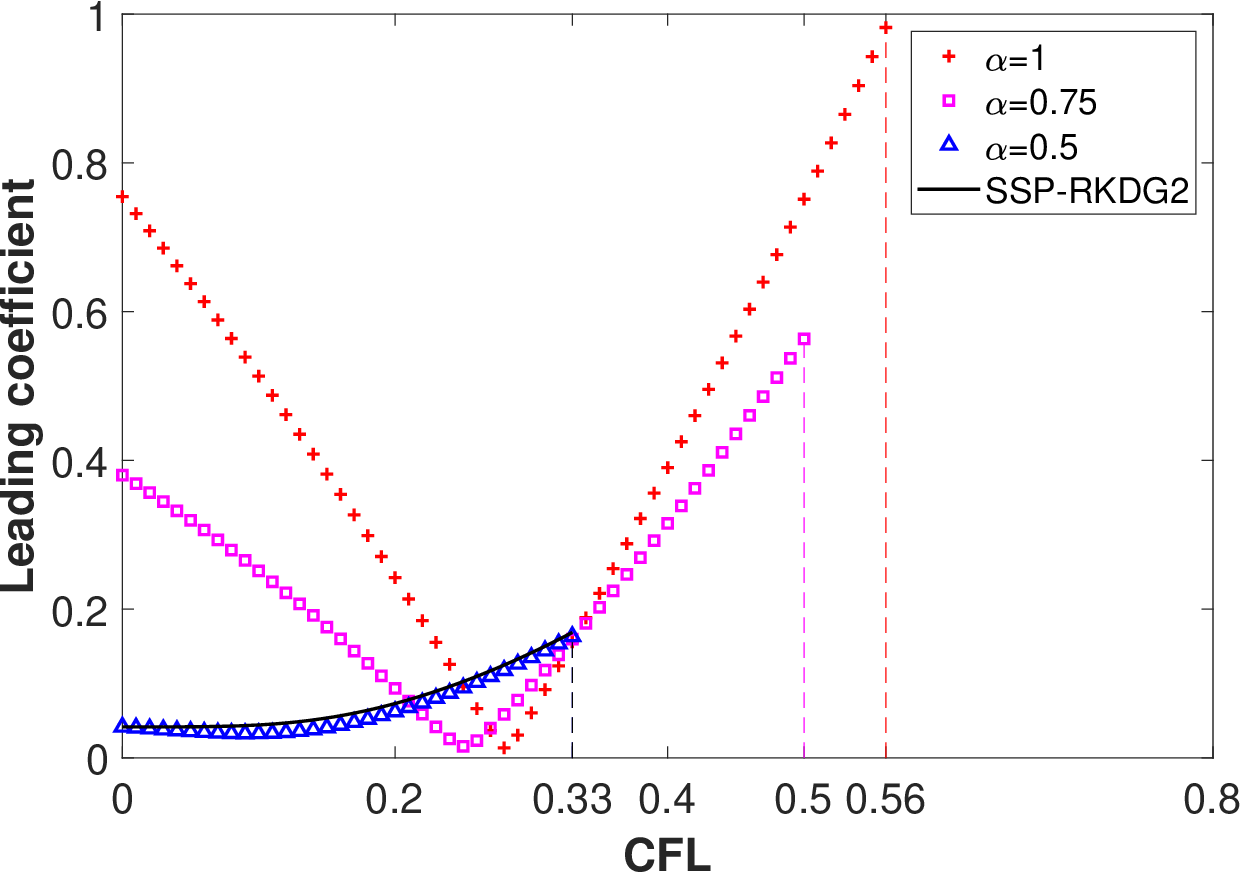}
			\caption{$\omt=9$}
		\end{subfigure} 
		\caption{The leading coefficient of the predicted error \eqref{eq:error_rk2} at different final times.}
		\label{pic:error rk2}\vspace{-0.75cm}
	\end{figure}

For $k=2$, the analysis is similar. We take the third-order scheme \eqref{eq:SSPRK3} as an example. Under the assumption that $\lambda<0.275$ and $\xi$ is sufficiently small, we have

\begin{subequations}\label{genericRK3_error}
	\begin{align}
	\left| \varepsilon_{-\frac{1}{3}}\right| =&\sqrt{\frac{\cfl^6 \omt^2}{576}+\frac{\left(5400 \cfl^4-1200 \cfl^3+44 \cfl^2-39 \cfl+4\right)^2}{41990400 \left(-12 \cfl^3+2 \cfl^2-3 \cfl+1\right)^2}}\xi^3+O(\xi^4)
   ,\\
 \left| \varepsilon_0 \right|=& \sqrt{\frac{\cfl^6 \omt^2}{576}+\frac{\left(600 \cfl^4-280 \cfl^3+56 \cfl^2-33 \cfl+6\right)^2}{1440000 \left(-12 \cfl^3+2 \cfl^2-3 \cfl+1\right)^2}}\xi^3+O(\xi^4),\\
 \left| \varepsilon_{\frac{1}{3}}\right|=&\sqrt{\frac{\cfl^6 \omt^2}{576}+\frac{\left(16200\lambda^4-960\lambda^3-788\lambda^2+399\lambda-88\right)^2}{1049760000 \left(-12 \cfl^3+2 \cfl^2-3 \cfl+1\right)^2}}\xi^3+O(\xi^4).
	\end{align}
\end{subequations}
We use $\varepsilon_{\star,2} = \max_{0\leq j\leq N-1}\left\{|\varepsilon_{-{1}/{3}}|, |\varepsilon_{0}|, |\varepsilon_{{1}/{3}}|\right\}$ for the predicted numerical error. It can be seen that $\varepsilon_{\star,2} = \mathcal{O}(\xi^3)$ and the method is third-order accurate. 

\begin{remark}
   For comparison, we list the error of the standard RKDG scheme by von Neumann analysis \cite{ZHONG20112814}, with 
   \begin{alignat*}{2}
\varepsilon_{\star, 1,\mathrm{RKDG}} \simeq \frac{\sqrt{1+16 \cfl^4 \bar{t}^2}}{24}  \xi^2 \  \text{ for } k = 1 \ \text{and} \
\varepsilon_{\star, 2,\mathrm{RKDG}} \simeq \frac{\sqrt{1+100 \cfl^6 \bar{t}^2}}{240}  \xi^3\ \text{ for } k = 2. 
\end{alignat*}
\end{remark}
\begin{remark}\label{rmk:expsin}
    Although in $\varepsilon_{\star,1}$, we assumed the initial condition to be $u(x,0) =\exp(\mathrm{i}\omega x)$, \eqref{eq:globerr} reveals that $\varepsilon_{j\pm1/4}$ essentially covers all possible arguments of a complex number due to the presence of $\exp{(\mathrm{i}\omega x_j)}$. Hence $\varepsilon_{\star,1}$ will also be a precise prediction for other initial inputs with a different argument, such as $\sin(x)$ and $\cos(x)$, as has been used in \cref{ex:adv}. Similar comments apply to $\varepsilon_{\star,2}$.
\end{remark}
\begin{remark}
    The order degeneracy of the sdRKDG schemes in Class B is observed in the presence of sonic points; however, for the linear advection equation with a constant wave speed, as shown above, the methods retain optimal convergence rates.
\end{remark}

\section{Numerical results}
\label{sec:Numerical results}
In this section, we present numerical solutions of our sdRKDG schemes and compare them with those of the standard RKDG schemes. We mainly present results for schemes in Class B --- \eqref{eq:SSPRK2},  \eqref{eq:SSPRK3}  and \eqref{eq:rk4} --- as they are more efficient thanks to the improved CFL conditions. 

\subsection{Accuracy tests} 
In this section, we test the accuracy of sdRKDG schemes in different settings and validate the improved CFL conditions by setting CFL numbers as those obtained from the von Neumann analysis. For the sdRKDG methods  \eqref{eq:SSPRK2}, \eqref{eq:SSPRK3}, and \eqref{eq:rk4}, the CFL numbers are set as $\lambda = 0.565$, $0.275$, and $0.213$, respectively. For the standard RKDG methods, the CFL numbers are set as $\lambda = 0.333$, $0.209$, and $0.145$, respectively.
\subsubsection{One-dimensional tests}
\begin{exmp}[linear advection equation]\label{ex:adv}
In this example, we solve the linear equation $\partial_tu+\partial_xu=0$ on domain $(- \pi,  \pi)$ up to time $t=1$. The initial condition is set as $u(x,0) = \sin(x)$ and the exact solution is given by $u(x, t)=\sin (x-t)$.  To compare the numerical errors with the predicted results from von Neumann analysis (see \cref{rmk:expsin}), we use the upwind flux and periodic boundary condition on  uniform meshes.  We test with several different CFL numbers and calculate the $\varepsilon_{\star,k}$ error as defined in \Cref{sec:full discrete}. Numerical errors and convergence rates are listed in \cref{table:predicted-p1} for the second-order scheme \eqref{eq:SSPRK2} and in \cref{table:predicted-p2} for the third-order scheme \eqref{eq:SSPRK3}.  It can be seen that the numerical errors agree well with the predicted errors for both schemes. In the second-order case, when both schemes adopt their corresponding maximum CFL numbers, the sdRKDG scheme, as it uses a larger time step size, admits a larger numerical error compared to the standard RKDG2 scheme; when both schemes adopt the same CFL number $0.333$, the sdRKDG scheme \eqref{eq:SSPRK2} admits a smaller error than that of the standard RKDG2 scheme; when both schemes adopt the CFL number $0.565$, the sdRKDG scheme remains stable, but the standard RKDG2 scheme blows up. These observations are consistent with \cref{fig:sfig2}. Similar results are observed for the third-order schemes. 
 
 \begin{table}[h!]
 	\centering
 	\resizebox{0.95\textwidth}{!}
  {
 		\begin{tabular}{ c|c | cc | cc|cc | cc }

 		\hline  
 		\multirow{2}{*}{CFL} &\multirow{2}{*}{$N$} &\multicolumn{4}{c|}{RKDG2}&\multicolumn{4}{c}{sdRKDG \eqref{eq:SSPRK2}}\\
 		\cline{3-10}
 		& &$\varepsilon_{1,\star}$ error &Order&Predicted   &Order&$\varepsilon_{1,\star}$ error &Order&Predicted   &Order\\
 		
 \hline
 		\multirow{6}{*}{\rotatebox[origin=c]{90}{{\centering $\lambda=0.001$}}}
 		&20   &  4.46E-03&   -  & 4.54E-03&   -   & 1.07E-02&   -  &  1.08E-02&   - \\ 
 		&40   &  1.08E-03& 2.05 & 1.08E-03& 2.07  & 2.79E-03& 1.94 &  2.80E-03& 1.95\\
 		&80   &  2.63E-04& 2.03 & 2.64E-04& 2.03  & 7.12E-04& 1.97 &  7.12E-04& 1.97\\
 		&160  &  6.51E-05& 2.02 & 6.51E-05& 2.02  & 1.80E-04& 1.99 &  1.80E-04& 1.99\\
 		&320  &  1.62E-05& 2.01 & 1.62E-05& 2.01  & 4.51E-05& 1.99 &  4.51E-05& 1.99\\
 		&640  &  4.03E-06& 2.00 & 4.03E-06& 2.00  & 1.13E-05& 2.00 &  1.13E-05& 2.00\\
 
 		\hline  
 		\multirow{6}{*}{\rotatebox[origin=c]{90}{{\centering $\lambda=0.333$}}}
 		
 		&20   & 6.61E-03&   -  & 4.87E-03&   -  &2.28E-03&   - &2.20E-03&   - \\ 
 		&40   & 1.65E-03& 2.00 & 1.17E-03& 2.06 &5.17E-04& 2.14&5.02E-04& 1.77\\
 		&80   & 3.74E-04& 2.14 & 2.86E-04& 2.03 &1.20E-04& 2.11&1.20E-04& 1.90\\
 		&160  & 8.12E-05& 2.20 & 7.10E-05& 2.01 &2.90E-05& 2.05&2.91E-05& 1.95\\
 		&320  & 1.82E-05& 2.16 & 1.77E-05& 2.01 &7.15E-06& 2.02&7.19E-06& 1.98\\
 		&640  & 4.40E-06& 2.05 & 4.40E-06& 2.00 &1.78E-06& 2.01&1.79E-06& 1.99\\
 
 		\hline
 		\multirow{6}{*}{\rotatebox[origin=c]{90}{{\centering $\lambda=0.565$}}} 		
 		&20  &1.64E+00&   - &---&- & 1.09E-02&   -   & 1.23E-02&   -  \\ 
 		&40  &6.07E+02&-8.35&---&- & 3.01E-03& 1.86  & 3.08E-03& 2.00 \\
 		&80  &9.38E+07&-&---&- &     7.57E-04& 1.99  & 7.72E-04& 2.00 \\
 		&160 &2.93E+19&-&---&- &     1.93E-04& 1.97  & 1.93E-04& 2.00 \\
 		&320 &3.38E+42&-&---&- &     4.82E-05& 2.00  & 4.83E-05& 2.00 \\
 		&640 &1.79E+89&-&---&- &     1.21E-05& 2.00  & 1.21E-05& 2.00 \\
 		\hline	
 	\end{tabular}}
 	\caption{ Numerical errors and predicted results using \eqref{eq:error_rk2} of RKDG and sdRKDG methods for the linear advection equation using $\mathcal{P}^1$ elements in \cref{ex:adv}.}
 	\label{table:predicted-p1}\vspace{-0.2cm}
 \end{table}

 \begin{table}[h!]
 	\centering
 	\resizebox{0.95\textwidth}{!}
  {
 		\begin{tabular}{c|c|cc|cc|cc|cc}

 		\hline  
 		\multirow{2}{*}{CFL} &\multirow{2}{*}{$N$} &\multicolumn{4}{c|}{RKDG3}&\multicolumn{4}{c}{sdRKDG \eqref{eq:SSPRK3}}\\
 		\cline{3-10}
 		& &$\varepsilon_{2,\star}$ error &Order&Predicted   &Order&$\varepsilon_{2,\star}$ error &Order&Predicted   &Order\\
 		\hline
 		\multirow{6}{*}{\rotatebox[origin=c]{90}{{\centering $\lambda=0.001$}}}
 				&20   &  1.27E-04&   -  & 1.29E-04&   - & 1.54E-04   &   -  &   1.55E-04   &   - \\ 
 		&40   &  1.61E-05& 2.98 & 1.61E-05& 3.00& 1.94E-05   & 2.99 &   1.93E-05   & 2.99\\
 		&80   &  2.02E-06& 3.00 & 2.02E-06& 3.00& 2.43E-06   & 3.00 &   2.42E-06   & 3.00\\
 		&160  &  2.52E-07& 3.00 & 2.52E-07& 3.00& 3.03E-07   & 3.00 &   3.02E-07   & 3.00\\
 		&320  &  3.15E-08& 3.00 & 3.15E-08& 3.00& 3.78E-08   & 3.00 &   3.78E-08   & 3.00\\
 		&640  &  3.94E-09& 3.00 & 3.94E-09& 3.00& 4.72E-09   & 3.00 &   4.72E-09   & 3.00\\
 		\hline  
 		\multirow{6}{*}{\rotatebox[origin=c]{90}{{\centering $\lambda=0.209$}}}
 		
 		&20   & 1.28E-04   &   -  & 1.30E-04    &   -  &6.27E-05    &   - &4.75E-05   &   - \\ 
 		&40   & 1.62E-05   & 2.99 & 1.62E-05    & 3.00 &6.22E-06    & 3.33&5.94E-06   & 3.00\\
 		&80   & 2.03E-06   & 3.00 & 2.03E-06    & 3.00 &7.30E-07    & 3.09&7.42E-07   & 3.00\\
 		&160  & 2.53E-07   & 3.00 & 2.53E-07    & 3.00 &9.11E-08    & 3.00&9.29E-08   & 3.00\\
 		&320  & 3.17E-08   & 3.00 & 3.17E-08    & 3.00 &1.15E-08    & 2.99&1.16E-08   & 3.00\\
 		&640  & 3.96E-09   & 3.00 & 3.96E-09    & 3.00 &1.44E-09    & 2.99&1.45E-09   & 3.00\\
 		\hline
 		\multirow{6}{*}{\rotatebox[origin=c]{90}{{\centering $\lambda=0.275$}}} 	
 		&20  &1.28E-04& -&---&- &    2.88E-04&   -  &  4.35E-04  &   -  \\ 
 		&40  &1.62E-05& 2.99&---&- & 4.24E-05& 2.76 &  5.44E-05  & 3.00 \\
 		&80  &2.22E-06& 2.86&---&- & 6.17E-06& 2.78 &  6.80E-06  & 3.00 \\
 		&160 &2.52E+03&- &---&- &    8.21E-07& 2.91 &  8.50E-07  & 3.00 \\
 		&320 &3.46E+22&-&---&- &     1.05E-07& 2.97 &  1.06E-07  & 3.00 \\
 		&640 &3.58E+60&-&---&- &     1.32E-08& 2.99 &  1.33E-08  & 3.00 \\
 		\hline	
 	\end{tabular}}
 	\caption{ Numerical errors and predicted results using  \eqref{genericRK3_error} of RKDG and sdRKDG methods for the linear advection equation using $\mathcal{P}^2$ elements in \cref{ex:adv}. }
 	\label{table:predicted-p2}
 \end{table}

\end{exmp}

\begin{exmp}[Burgers equation]\label{ex:1dburgers}
We solve the nonlinear Burgers equation in one dimension,
	$\partial_{t} u+\partial_{x}\left({u^{2}}/{2}\right)=0$,  on $x \in(-\pi,  \pi)$ with periodic boundary conditions. We test two different initial conditions: $u(x, 0)=\sin (x)+2$ and $u(x, 0)=\sin (x)+0.5$. The latter is  known to have sonic points while the former does not.
 We use the Godunov flux and compute to $t=0.2$ on both uniform and nonuniform meshes for spatial polynomial degrees $k = 1,2,3$. For the nonuniform meshes, we generate the meshes by randomly perturbing the nodes up to $15\%h$.  For the initial condition $u(x, 0)=\sin (x)+2$, where there are no sonic points, both schemes achieve the optimal order of accuracy on the same meshes, as given in \cref{table_burgers1d}. 
 
 For the initial condition $u(x, 0)=\sin (x)+0.5$, sonic points may cause the sdRKDG schemes in Class B to be suboptimal as shown in \cref{table:burgers_sonic}. We observe that their $L^1$, $L^2$, and $L^\infty$ accuracy orders are dropped by $0.25$, $0.5$, and $1$, correspondingly. While the sdRKDG schemes in Class A remain to be optimal.  We have also tested with the linear advection equation with degenerate variable coefficients in \cite{li2019analysis}, and similar order reduction is observed for the sdRKDG methods of both classes. Detailed numerical results are omitted to save space. The investigation into the cause and remedy of such order degeneracy will be left to our future work.

\begin{table}[h!]
		\centering
  {
			\begin{tabular}{   c|c|c|c  c| c c|cc  }
				\hline
		   	&&\multirow{2}{*}{$N$} &\multicolumn{2}{c|}{$k=1$}&\multicolumn{2}{c|}{$k=2$}&\multicolumn{2}{c}{$k=3$} \\
			\cline{4-9}
			 	&&&$L^2$ error &Order&$L^2$ error &Order&$L^2$ error &Order \\
				\hline
\multirow{8}{*}{\rotatebox[origin=c]{90}{{\centering Uniform  }}}&
\multirow{4}{*}{\rotatebox[origin=c]{90}{{\centering RKDG  }}}
&40 & 2.54E-03&   - &4.09E-05&   - & 6.65E-07&   -  \\
&&80& 6.58E-04& 1.95&5.16E-06& 2.99& 4.19E-08& 3.99 \\
&&160&1.70E-04& 1.95&6.52E-07& 2.98& 2.67E-09& 3.97 \\
&&320&4.32E-05& 1.98&8.18E-08& 3.00& 1.67E-10& 4.00 \\
\cline{2-9}				
 
&\multirow{4}{*}{\rotatebox[origin=c]{90}{{\centering sdRKDG}}}
&40&  3.35E-03&   - &4.63E-05& 0.00&4.43E-06&   -  \\
&&80& 9.03E-04& 1.89&6.33E-06& 2.87&3.08E-07& 3.85 \\
&&160&2.34E-04& 1.95&8.43E-07& 2.91&2.03E-08& 3.92 \\
&&320&5.89E-05& 1.99&1.09E-07& 2.95&1.31E-09& 3.96 \\
\hline

\multirow{8}{*}{\rotatebox[origin=c]{90}{{\centering Nonuniform  }}}&
\multirow{4}{*}{\rotatebox[origin=c]{90}{{\centering RKDG }}}
&40&  2.99E-03&   - &5.05E-05&   - &9.37E-07&   -  \\
&&80& 7.30E-04& 2.04&5.90E-06& 3.10&5.23E-08& 4.16 \\
&&160&1.84E-04& 1.99&7.62E-07& 2.95&3.40E-09& 3.94 \\
&&320&4.60E-05& 2.00&9.37E-08& 3.02&2.10E-10& 4.02 \\
\cline{2-9}				
&\multirow{4}{*}{\rotatebox[origin=c]{90}{{\centering sdRKDG}}}
&40&  3.30E-03&   - & 6.70E-05&   - &7.40E-06&   - 		 \\
&&80& 8.21E-04& 2.01& 8.41E-06& 2.99&4.40E-07& 4.09  \\
&&160&2.01E-04& 2.03& 1.11E-06& 2.92&2.91E-08& 3.90  \\
&&320&5.01E-05& 2.00& 1.55E-07& 2.84&1.84E-09& 3.99   \\
\hline

		\end{tabular}
        }
		\caption{$L^2$ error of RKDG and sdRKDG methods for the one-dimensional Burgers equation without sonic points on uniform and nonuniform meshes in \cref{ex:1dburgers}.  } \label{table_burgers1d}\vspace{-0.2cm}
	\end{table}

 \begin{table}[h!]
 \begin{center}
		\begin{tabular}{c | c|c| c | c |c|c  }
		\hline 		 
		$N$&$L^1$ error &Order&$L^2$ error &Order&$L^\infty$ error &Order \\
		\hline
		Class A&\multicolumn{6}{c}{$k=1$, sdRKDG \eqref{eq:midpoint}, $\lambda=0.333$}\\
		\hline
		40 &4.83E-03&   - &2.45E-03&   - &3.44E-03&   - \\
		80 &1.25E-03& 1.95&6.43E-04& 1.93&9.03E-04& 1.93\\
		160&3.18E-04& 1.98&1.64E-04& 1.97&2.26E-04& 2.00\\
		320&8.04E-05& 1.98&4.17E-05& 1.98&5.70E-05& 1.98\\
		640&2.02E-05& 1.99&1.05E-05& 1.99&1.43E-05& 1.99\\		
		\hline			
		Class A&\multicolumn{6}{c}{$k=2$, sdRKDG \eqref{eq:heun}, $\lambda=0.191$}\\
		\hline
		40 &7.44E-05&   - &3.98E-05&   - &8.56E-05&   - \\
		80 &9.37E-06& 2.99&5.19E-06& 2.94&1.30E-05& 2.72\\
		160&1.18E-06& 2.99&6.67E-07& 2.96&1.78E-06& 2.87\\
		320&1.48E-07& 2.99&8.52E-08& 2.97&2.39E-07& 2.90\\
		640&1.85E-08& 3.00&1.08E-08& 2.98&3.11E-08& 2.94\\
		
		\hline
		Class B&\multicolumn{6}{c}{$k = 1$, sdRKDG \eqref{eq:SSPRK2}, $\lambda=0.565$}\\
		\hline
		40 &6.30E-03&   - &3.37E-03&   - &4.03E-03&   - \\
		80 &2.10E-03& 1.58&1.29E-03& 1.38&2.41E-03& 0.74\\
		160&6.60E-04& 1.67&4.82E-04& 1.42&1.34E-03& 0.85\\
		320&2.03E-04& 1.70&1.81E-04& 1.41&6.85E-04& 0.96\\
		640&6.05E-05& 1.75&6.68E-05& 1.44&3.57E-04& 0.94\\		
		\hline			
		Class B&\multicolumn{6}{c}{$k = 2$, sdRKDG \eqref{eq:SSPRK3}, $\lambda=0.275$}\\
		\hline	
		40 &1.32E-04&   - &7.78E-05&   - &1.74E-04&   - \\
		80 &1.89E-05& 2.81&1.22E-05& 2.67&2.17E-05& 3.00\\
		160&2.67E-06& 2.83&1.97E-06& 2.63&4.43E-06& 2.29\\
		320&3.70E-07& 2.85&3.29E-07& 2.58&1.06E-06& 2.07\\
		640&5.07E-08& 2.87&5.63E-08& 2.55&2.57E-07& 2.04\\

		\hline
	\end{tabular} 
  \end{center}
		\caption{Numerical errors for the one-dimensional Burgers equation with sonic points in \cref{ex:1dburgers}.} \label{table:burgers_sonic}\vspace{-0.2cm}
	\end{table}

\end{exmp}

 \begin{exmp}[Euler equations]\label{ex:1deuler}
In this test, we solve the nonlinear system of one-dimensional Euler equations $\partial_t{u}+\partial_x{f}({u})=0$ on $(0,1)$, 
where ${u}=(\rho, \rho w, E)^\intercal$,  ${f}({u})=\left(\rho w, \rho w^{2}+p, w(E+p)\right)^\intercal$, $E={p}/{(\gamma-1)}+\rho w^{2}/2$ with $\gamma=1.4$. We impose the initial condition
$\rho(x, 0) = 1 + 0.2\sin(2\pi x)$, $w(x, 0) = 1$, $p(x, 0) = 1$ with periodic boundary conditions, and the exact solution is $\rho(x, t) = 1 + 0.2\sin(2\pi(x- t))$, $w(x, t) = 1$, $p(x, t) = 1$. We use the local Lax--Friedrichs flux to compute to $t = 10$.  

As demonstrated in \cref{table:euler accuracy}, the sdRKDG schemes remain stable and achieve optimal orders of accuracy, with errors comparable to RKDG schemes, for this nonlinear system. We have also conducted tests on randomly perturbed meshes or with different final times, observing similar convergence rates (details omitted).

\begin{table}[h!]
		\centering
		\resizebox{1.\textwidth}{!}
  {
		\begin{tabular}{  c | cc | cc|cc | cc }
			\hline  \multirow{2}{*}{$N$} &\multicolumn{4}{c|}{ $k=1$}&\multicolumn{4}{c}{ $k=2$}\\
                   \cline{2-9}
			  &$L^2$ error &Order&$L^\infty$ error &Order&$L^2$ error &Order&$L^\infty$ error &Order\\
    \hline              
              &\multicolumn{4}{c|}{sdRKDG \eqref{eq:SSPRK2}, $\lambda=0.565$}&\multicolumn{4}{c}{sdRKDG \eqref{eq:SSPRK3}, $\lambda=0.275$}\\
            
\hline  
20  &   5.16e-02 &         -    &  7.84e-02 &         -    & 4.88e-05 &         -    &  1.01e-04 &         -   \\
40  &   1.33e-02 &         1.96 &  1.98e-02 &         1.98 & 5.43e-06 &         3.17 &  1.47e-05 &         2.79 \\
80  &   3.32e-03 &         2.00 &  4.82e-03 &         2.04 & 6.51e-07 &         3.06 &  2.04e-06 &         2.85 \\
160 &   8.30e-04 &         2.00 &  1.20e-03 &         2.01 & 8.07e-08 &         3.01 &  2.66e-07 &         2.94 \\

\hline  
&\multicolumn{4}{c|}{sdRKDG \eqref{eq:SSPRK2},   $\lambda=0.333$}&\multicolumn{4}{c}{sdRKDG \eqref{eq:SSPRK3}, $\lambda=0.209$}\\\hline 
20  &   6.22e-03 &         -    &  1.19e-02 &         -    & 3.69e-05  &        -     &  1.29e-04 &        -     \\
40  &   1.17e-03 &         2.41 &  2.34e-03 &         2.34 & 4.73e-06  &         2.96 &  1.75e-05 &         2.89 \\
80  &   2.62e-04 &         2.16 &  4.97e-04 &         2.24 & 5.98e-07  &         2.98 &  2.24e-06 &         2.97 \\
160 &   6.34e-05 &         2.05 &  1.14e-04 &         2.13 & 7.52e-08  &         2.99 &  2.84e-07 &         2.98 \\

\hline
             &\multicolumn{4}{c|}{RKDG,  $\lambda=0.333$ }&\multicolumn{4}{c}{RKDG, $\lambda=0.209$}\\\hline 
20  &   3.23e-03 &         -    &  5.03e-03 &         -    & 3.81e-05 &        -     &  1.23e-04 &        -    \\
40  &   7.76e-04 &         2.06 &  1.17e-03 &         2.10 & 4.68e-06 &         3.03 &  1.62e-05 &         2.92 \\
80  &   1.92e-04 &         2.01 &  2.83e-04 &         2.06 & 5.84e-07 &         3.00 &  2.07e-06 &         2.97 \\
160 &   4.79e-05 &         2.00 &  7.24e-05 &         1.96 & 7.29e-08 &         3.00 &  2.61e-07 &         2.99 \\
\hline
		\end{tabular}
  }
		\caption{Numerical errors for one-dimensional Euler equations in \cref{ex:1deuler}.  } \label{table:euler accuracy}\vspace{-0.2cm}
	\end{table}  

\end{exmp}

\subsubsection{Two-dimensional tests}
\begin{exmp}[Euler equations in two dimensions] \label{ex:2deuler}
	We consider the nonlinear Euler equations in two dimensions : 
	$\partial_t{u}+\partial_x{f}({u})+\partial_y{g}({u})=0$, where $u = (\rho, \rho w, \rho v, E)^\intercal$, $f(u) = (\rho w, \rho w^2 + p, \rho w v, w(E+p))^\intercal$, $g(u) = (\rho v, \rho wv, \rho v^2 + p, v(E+p))^\intercal$, $E={p}/{(\gamma-1)}+\rho (w^2+v^{2})/2$ with $\gamma=1.4$
 with the periodic boundary conditions.  The initial condition is set as $\rho(x, y, 0)=1+0.2 \sin (2\pi(x+y))$, $w(x, y, 0)=0.7$, $v(x, y, 0)=0.3$, $p(x, y, 0)=1$, and the exact solution is $\rho(x, y, t)=1+0.2 \sin (2\pi(x+y-(w+v) t))$, $w=0.7$, $v=0.3$, $p=1$. We use the local Lax--Friedrichs flux and compute the solution up to $t=0.5$. The numerical results are listed in \cref{table:2deuler-periodic-rectangle}.  We can observe that both RKDG and sdRKDG schemes achieve their expected order of optimal accuracy with comparable numerical errors on rectangular meshes.

	\begin{table}[h!]
 \centering
		\resizebox{1.\textwidth}{!}
  {
  \begin{tabular}{  c | cc | cc|cc | cc }
			
               \hline
               \multirow{2}{*}{$N$} &\multicolumn{4}{c|}{ $k=1$}&\multicolumn{4}{c}{ $k=2$}\\
\cline{2-9}
              &$L^2$ error &Order&$L^\infty$ error &Order&$L^2$ error &Order&$L^\infty$ error &Order\\
            \hline
              &\multicolumn{4}{c|}{sdRKDG \eqref{eq:SSPRK2},  $\lambda=0.565$}&\multicolumn{4}{c}{sdRKDG \eqref{eq:SSPRK3}, $\lambda=0.275$}\\
            
   \hline  
20  &  2.83e-03 &          -   &  6.42e-03 &         -     & 1.28e-04 &          -   &  8.38e-04 &          -       \\
40  &  5.53e-04 &         2.36 &  1.82e-03 &         1.82  & 1.77e-05 &         2.85 &  1.26e-04 &         2.73  \\
80  &  1.26e-04 &         2.14 &  4.91e-04 &         1.89  & 2.32e-06 &         2.94 &  1.69e-05 &         2.90  \\
160 &  3.06e-05 &         2.04 &  1.25e-04 &         1.97  & 2.94e-07 &         2.98 &  2.16e-06 &         2.97 \\
 
\hline  
&\multicolumn{4}{c|}{sdRKDG \eqref{eq:SSPRK2},  $\lambda=0.333$}&\multicolumn{4}{c}{sdRKDG \eqref{eq:SSPRK3},  $\lambda=0.209$}\\\hline 
20  &   4.52e-03 &            - &  9.26e-03 &            -  &  1.29e-04 &            - &  8.42e-04 &          - \\
40  &   1.05e-03 &         2.11 &  2.45e-03 &         1.92  &  1.78e-05 &         2.85 &  1.26e-04 &         2.74 \\
80  &   2.57e-04 &         2.03 &  6.31e-04 &         1.96  &  2.33e-06 &         2.94 &  1.70e-05 &         2.90 \\
160 &   6.40e-05 &         2.01 &  1.61e-04 &         1.97  &  2.95e-07 &         2.98 &  2.16e-06 &         2.97 \\
   \hline
             &\multicolumn{4}{c|}{RKDG,  $\lambda=0.333$}&\multicolumn{4}{c}{RKDG,  $\lambda=0.209$}\\\hline 
20  &   2.43e-03 &          -   &  5.68e-03 &            -  & 1.11e-04 &           -  &  7.35e-04 &           -  \\
40  &   4.27e-04 &         2.51 &  1.74e-03 &         1.71  & 1.39e-05 &         3.00 &  1.05e-04 &         2.80  \\
80  &   9.07e-05 &         2.24 &  4.74e-04 &         1.87  & 1.73e-06 &         3.00 &  1.39e-05 &         2.93  \\
160 &   2.15e-05 &         2.08 &  1.23e-04 &         1.94  & 2.16e-07 &         3.00 &  1.76e-06 &         2.98 \\
   \hline
		\end{tabular}
  }
		\caption{Numerical errors for two-dimensional Euler equations on rectangular meshes in \cref{ex:2deuler}.}
		\label{table:2deuler-periodic-rectangle}\vspace{-0.2cm}
	\end{table}
 \end{exmp}
\subsection{Tests with discontinuous solutions} 
We now test the sdRKDG method for Euler equations with discontinuous solutions. We always take $\gamma=1.4$ and plot cell averages in this subsection. We apply the standard TVB minmod limiters in \cite{rkdg3,rkdg5} to identify troubled cells and to reconstruct polynomials. The TVB constant $M$ is to be specified for each problem. In our numerical tests, the CFL numbers are taken as $0.56$ and $0.27$ for $\mathcal{P}^1$  and $\mathcal{P}^2$ sdRKDG methods (\eqref{eq:SSPRK2} and \eqref{eq:SSPRK3}), and as $0.3$ and $0.18$ for $\mathcal{P}^1$ and $\mathcal{P}^2$ RKDG methods, respectively.

\subsubsection{One-dimensional tests}
 
\begin{exmp}[Sod problem]\label{ex:sod}
In this test, we solve  the one-dimensional Euler equations given in \cref{ex:1deuler} with a discontinuous initial condition  given by	
\begin{equation*}
		(\rho, w, p)= \begin{cases}(1,0,1), & x \leq 0.5, \\ (0.125,0,0.1), & x> 0.5.\end{cases}
	\end{equation*}
	We compute to $t = 0.2$ with $N=100$ elements. We use the local Lax--Friedrichs flux and the TVB limiter with $M=1$. As presented in \cref{fig:sod}, the sdRKDG method performs well near the discontinuity, and its solution stays close to both the RKDG solution and the exact solution. 
	
	\begin{figure}[h!]
            \centering
		\begin{subfigure}[t]{.3\textwidth}
			\centering
			\includegraphics[trim=0cm 1cm 0cm 1cm, width=1. \linewidth]{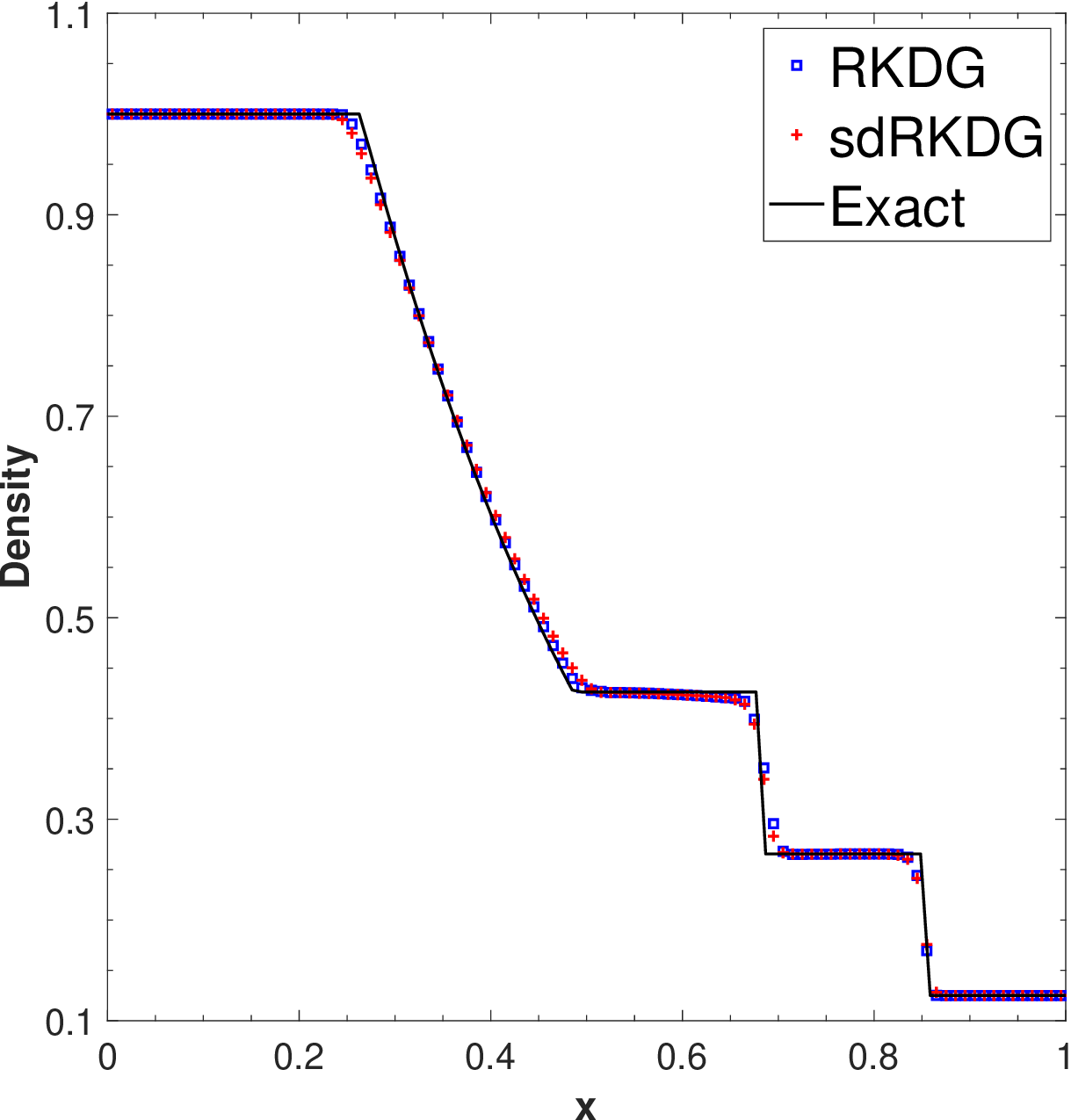}
			\caption{$k=1$ }
		\end{subfigure}%
		\hspace{10mm}
		\begin{subfigure}[t]{.3 \textwidth}
			\centering
			\includegraphics[trim=0cm 1cm 0cm 1cm, width=1. \linewidth]{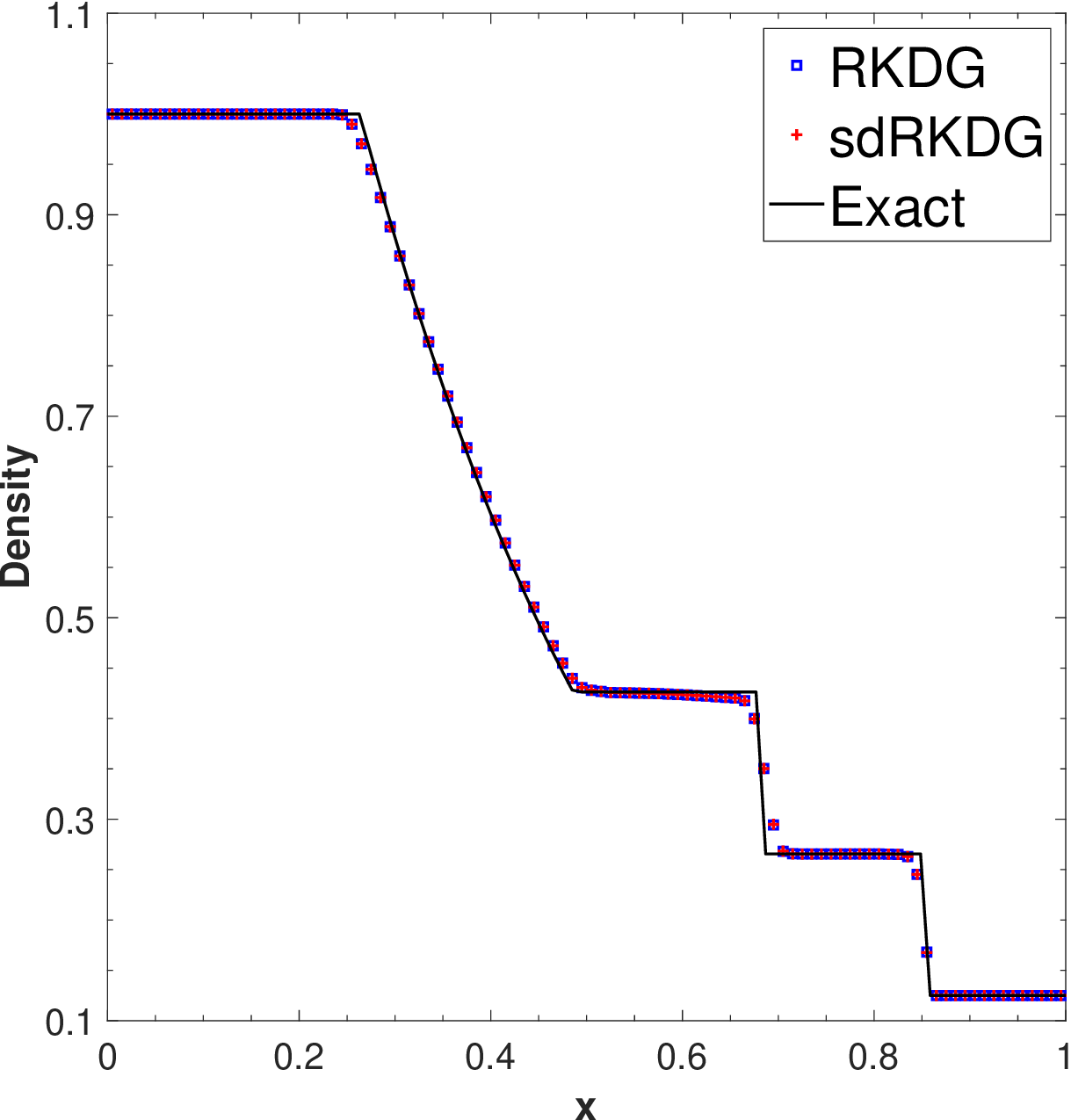}
			\caption{$k=2$}
		\end{subfigure} 
		\caption{Solution profiles for the Sod problem in \cref{ex:sod}. $N=100$ and $M=10$.}\vspace{-0.2cm}
		\label{fig:sod}
	\end{figure}

\end{exmp}
\begin{exmp}[Interacting blast waves]\label{ex:blast}
	We consider Euler equations with the following initial condition, 
	\begin{equation*}
		(\rho, w, p)= \begin{cases}(1,0,1000), & x \leq 0.1, \\ (1,0,0.01), & 0.1<x \leq 0.9, \\ (1,0,100), & x>0.9.\end{cases}
	\end{equation*}
Reflective boundary conditions are used both at $x=0$ and $x=1$. We use the local Lax--Friedrichs flux and the TVB
limiter with $M=200$, and compute up to $t = 0.038$ on meshes with $N = 200$ and $N = 400$ cells. The numerical density $\rho$ is compared with the reference solution in \cref{fig:blastwave}, computed using a fifth-order finite difference weighted essentially non-oscillatory (WENO) scheme \cite{shu2009high} on a more refined mesh, showing good agreement.
\begin{figure}[h!]
	\centering
\begin{subfigure}[t]{.24  \textwidth}
			\centering			
   \includegraphics[trim=1.5cm 1cm 1cm 2cm,width=1.0\textwidth]  {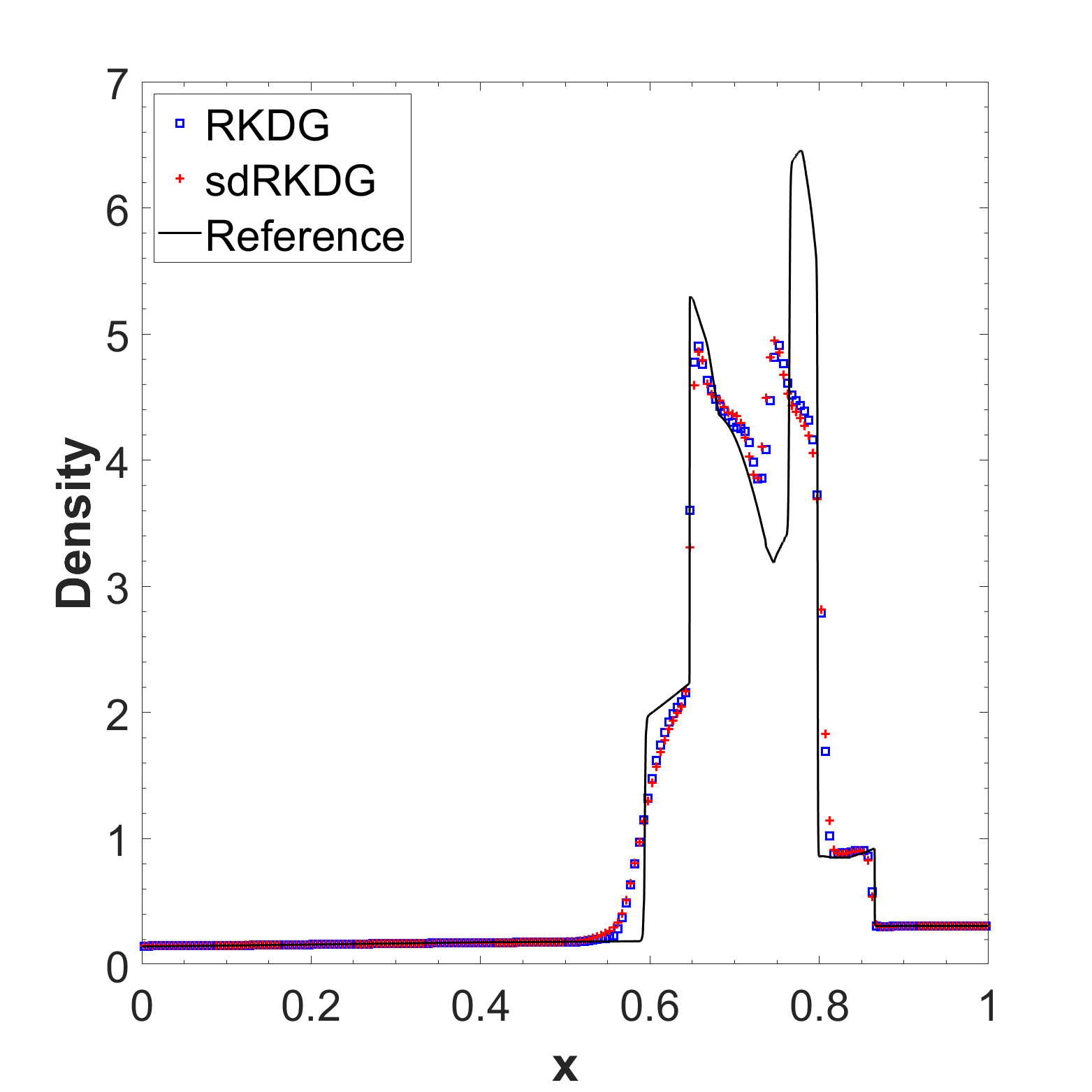}
			\caption{$k=1$, $N = 200$}		 
      \end{subfigure}		 
		\begin{subfigure}[t]{.24  \textwidth}
			\centering			\includegraphics[trim=1.5cm 1cm 1cm 2cm,width=1.0 \linewidth]            {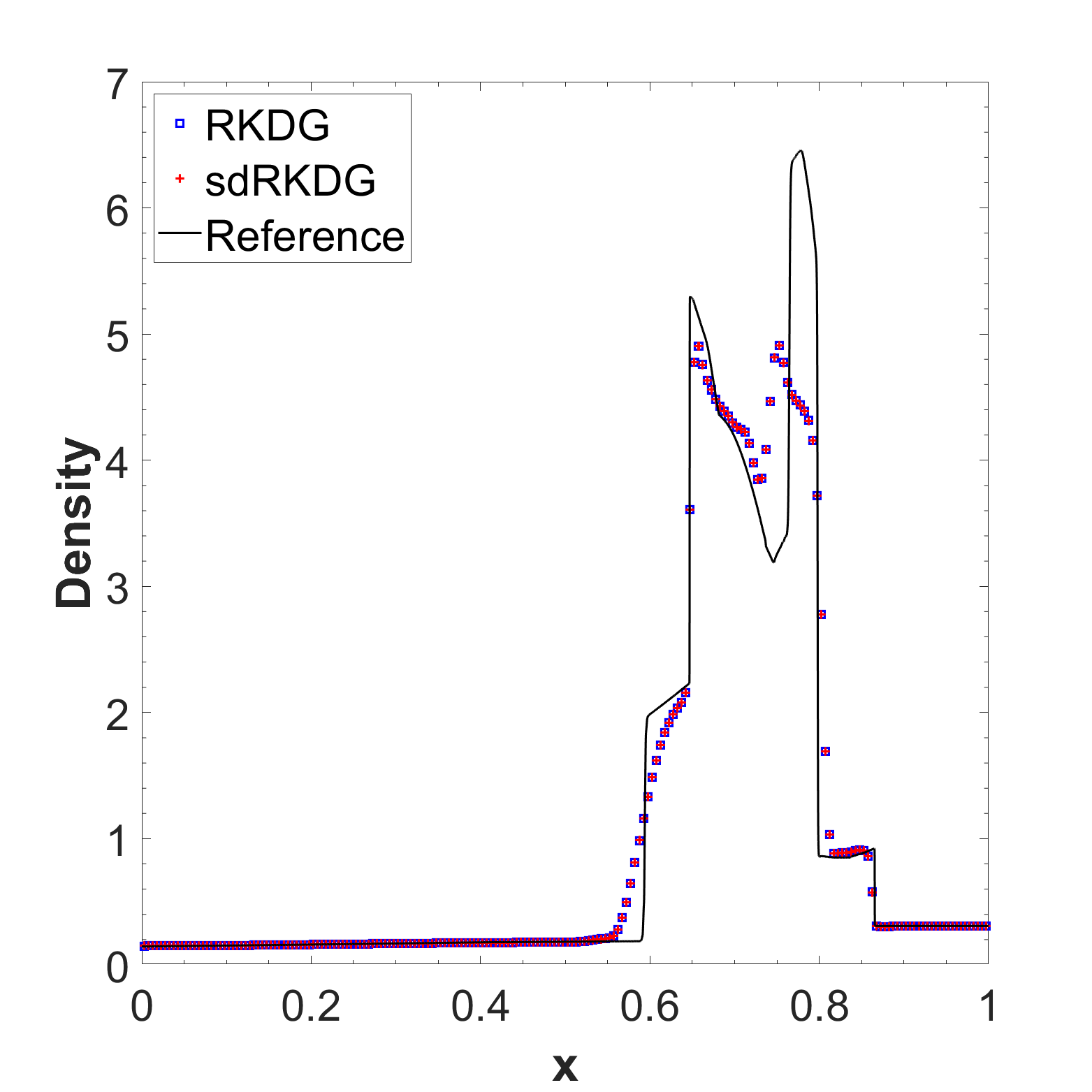}
			\caption{$k=2$, $N = 200$}
		\end{subfigure}
		\begin{subfigure}[t]{.24 \textwidth}
			\centering			\includegraphics[trim=1.5cm 1cm 1cm 2cm,width=1.0 \linewidth]            {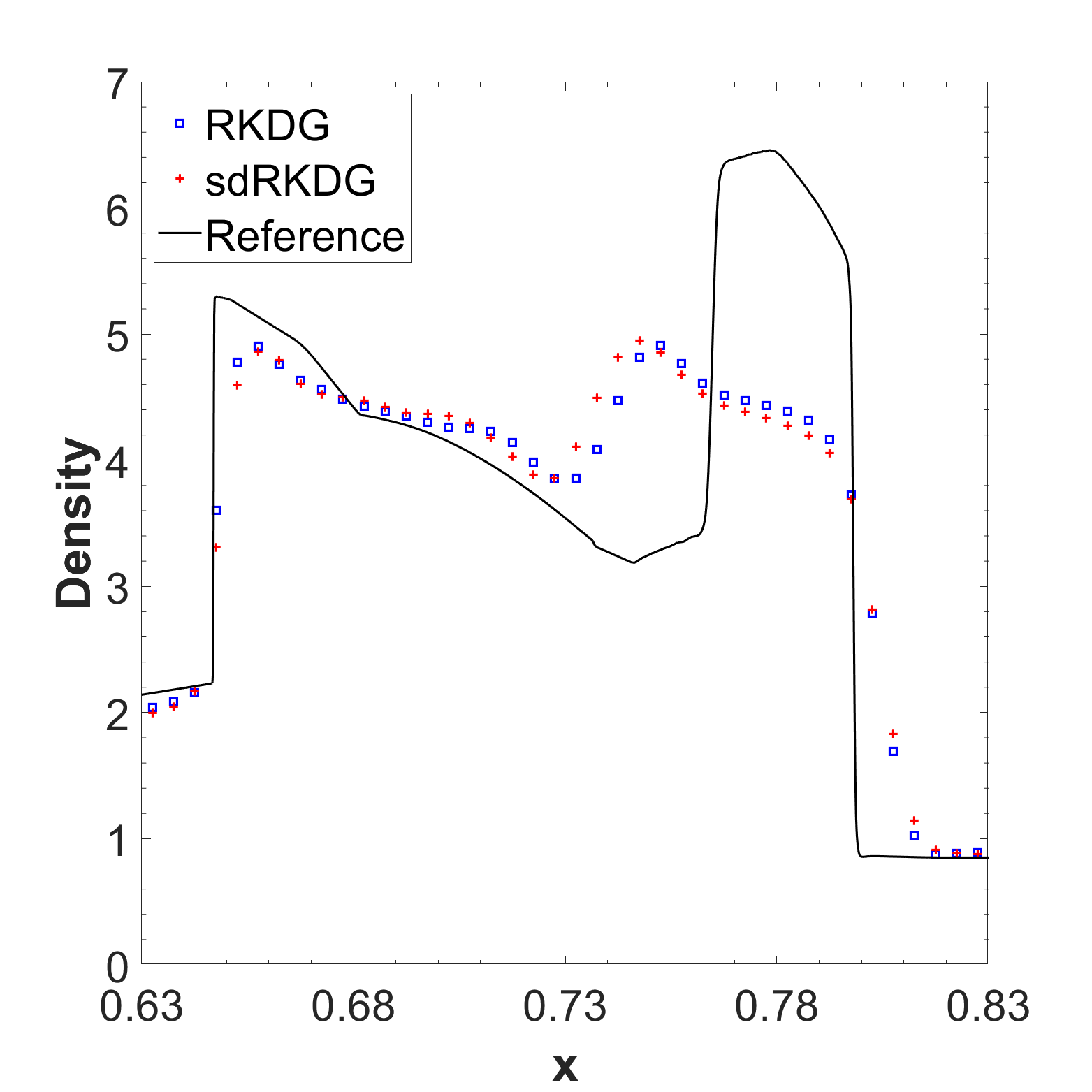}
			\caption{$k=1$ (Zoomed-in)}
		\end{subfigure}%
		\begin{subfigure}[t]{.24 \textwidth}
			\centering			\includegraphics[trim=1.5cm 1cm 1cm 2cm,width=1.0 \linewidth]   {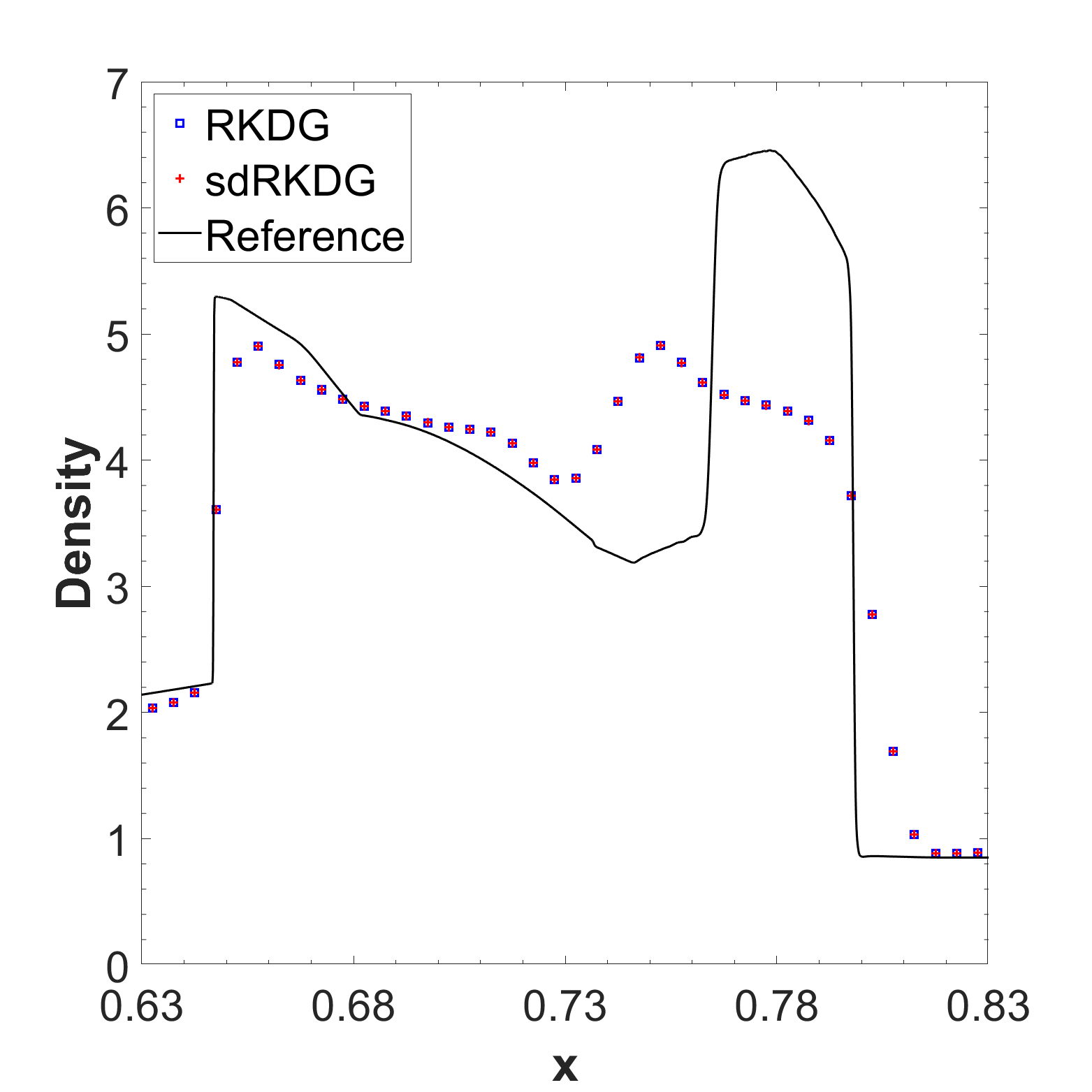}
			\caption{$k=2$ (Zoomed-in)}
		\end{subfigure}
		\label{fig:blastwave-200}
\\
\begin{subfigure}[t]{.24  \textwidth}
			\centering			
   \includegraphics[trim=1.5cm 1cm 1cm 0cm,width=1.0\textwidth]  {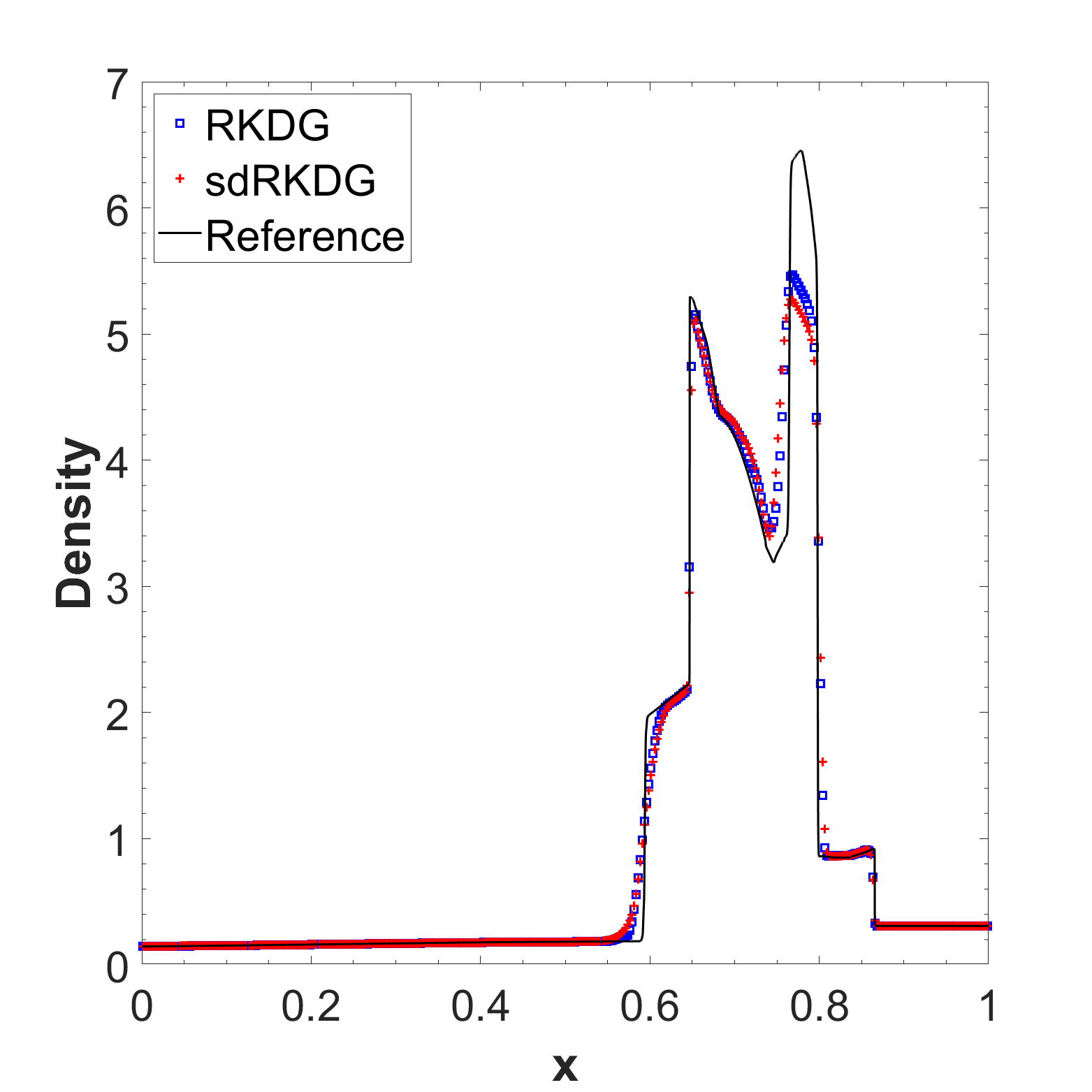}
			\caption{$k=1$, $N = 400$}		 
      \end{subfigure}		 
		\begin{subfigure}[t]{.24  \textwidth}
			\centering			\includegraphics[trim=1.5cm 1cm 1cm 0cm,width=1.0 \linewidth]            {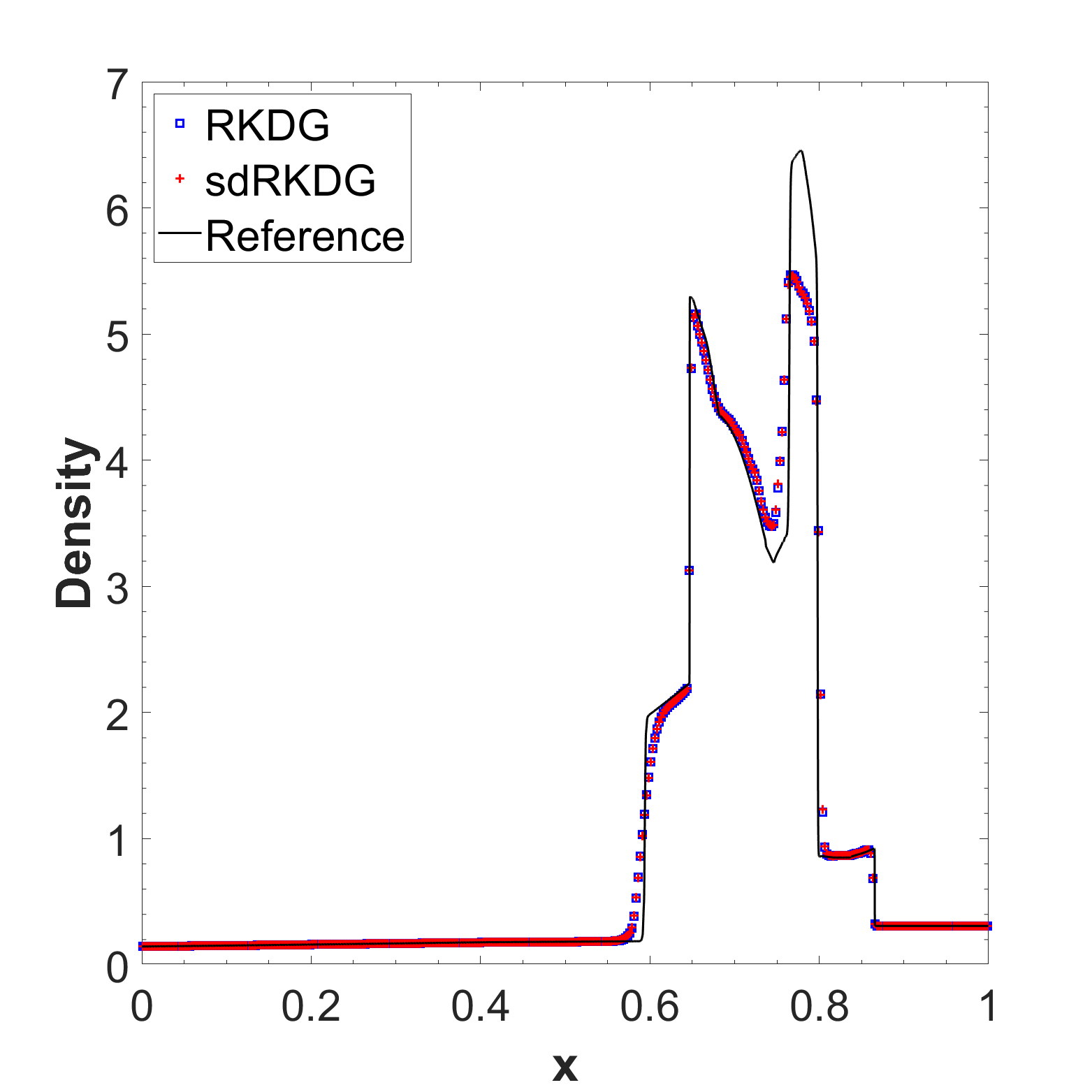}
			\caption{$k=2$, $N = 400$}
		\end{subfigure}
		\begin{subfigure}[t]{.24 \textwidth}
			\centering			\includegraphics[trim=1.5cm 1cm 1cm 0cm,width=1.0 \linewidth]            {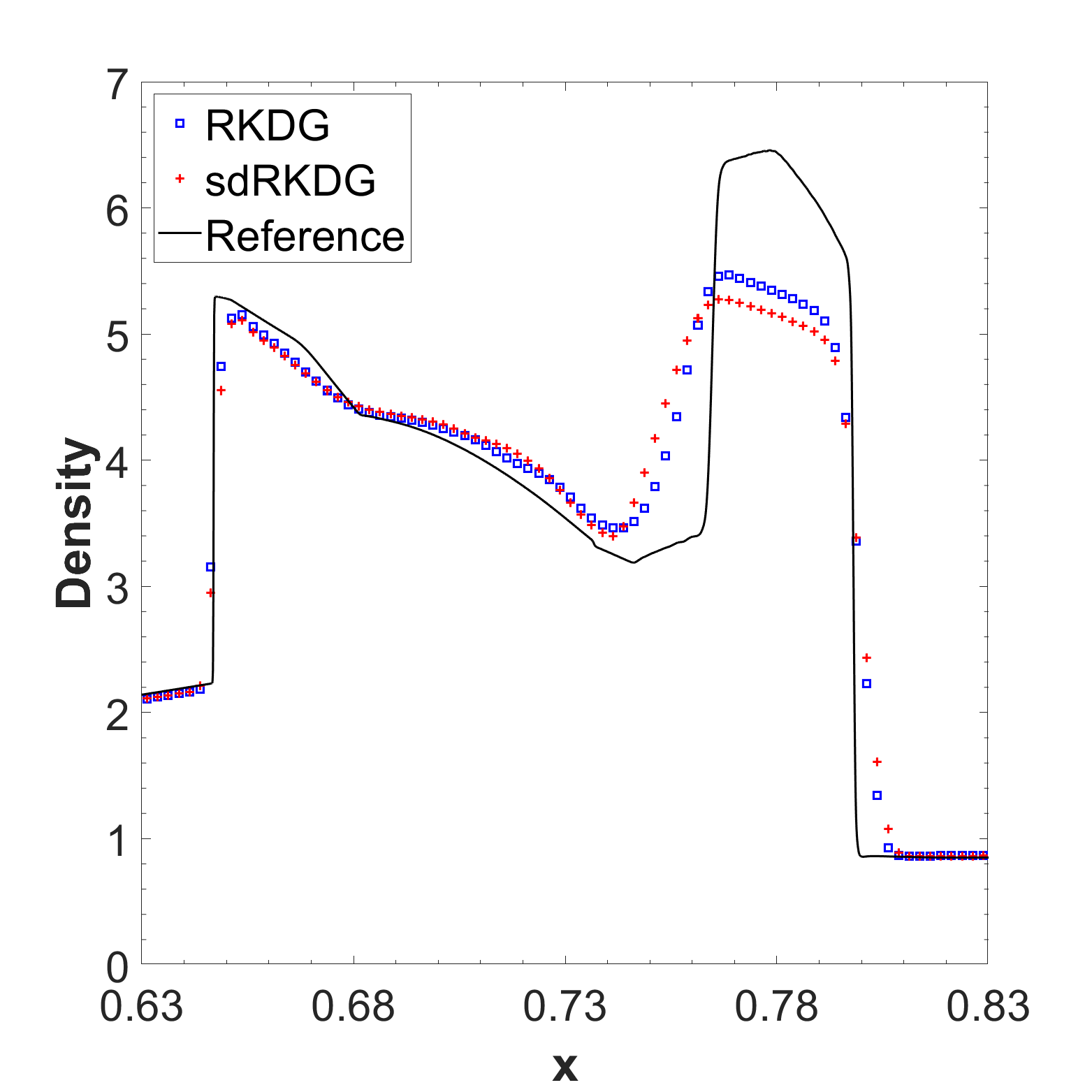}
			\caption{$k=1$ (Zoomed-in)}
		\end{subfigure}%
		\begin{subfigure}[t]{.24 \textwidth}
			\centering			\includegraphics[trim=1.5cm 1cm 1cm 0cm,width=1.0 \linewidth]   {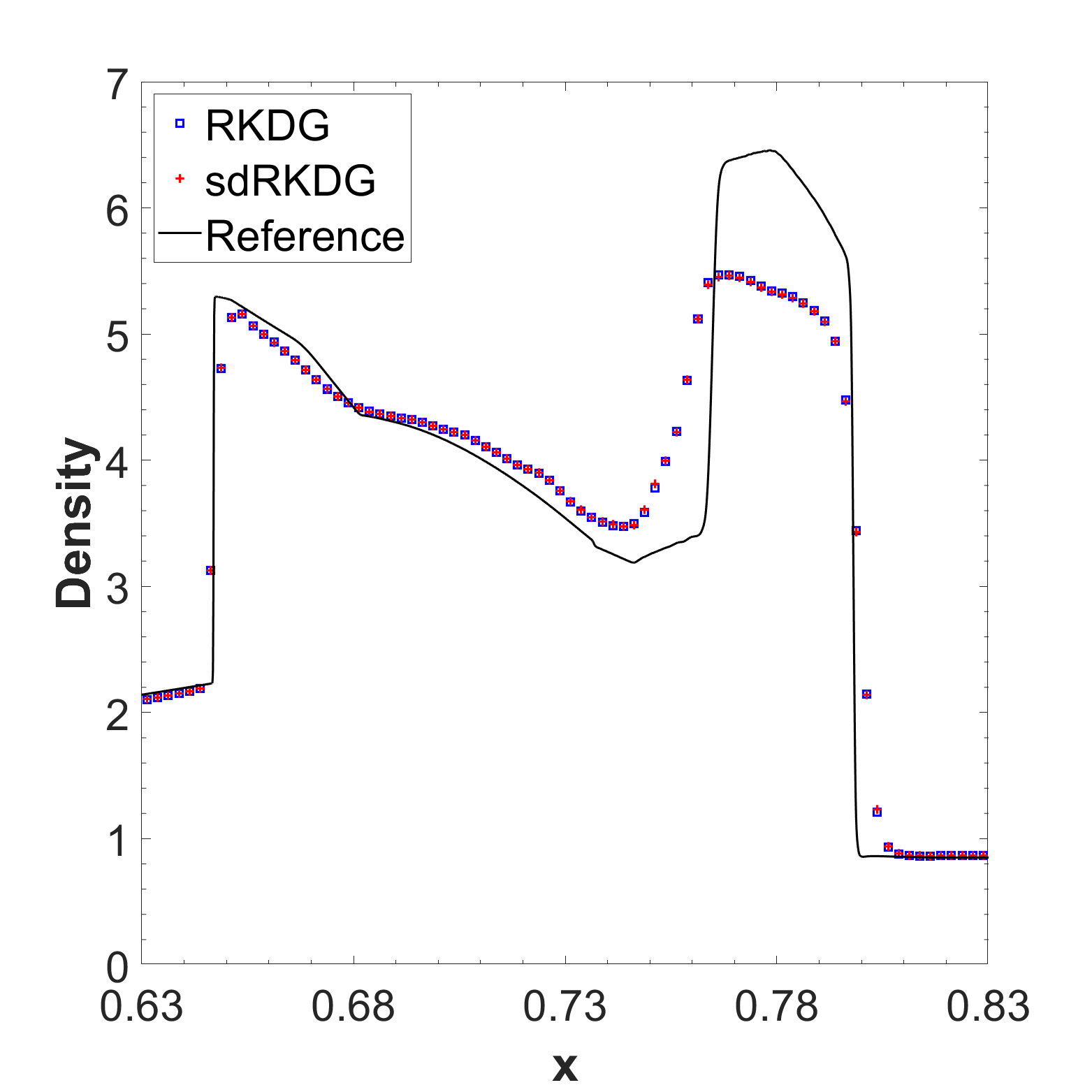}
			\caption{$k=2$ (Zoomed-in)}
		\end{subfigure}
		\caption{Solution profiles for the blast wave problem in \cref{ex:blast}. $M=200$. In the first row, $N = 200$; in the second row, $N = 400$.}
		\label{fig:blastwave}
	\end{figure}
\end{exmp}
\begin{exmp}[Shu--Osher problem]\label{ex:shuosher}
    We then consider the Shu--Osher problem, a benchmark test with complicated structures involving both strong and weak shock waves and highly oscillatory smooth waves  \cite{SHU198932}. The initial condition is set as
	\begin{equation*}
		(\rho, w, p)= \begin{cases}(3.857143,2.629369,10.333333), & x \leq -4, \\ (1+0.2 \sin (5 x), 0,1), & x>-4 .\end{cases}
	\end{equation*}
	The numerical density $\rho$ is plotted at $t=1.8$ against the reference solution computed by the fifth-order finite difference WENO scheme on a fine mesh. In \cref{fig:shu-osher}, we plot the densities by sdRKDG and RKDG methods with the local Lax--Friedrichs flux and the TVB
limiter with $M=300$. In \cref{fig:shuosher-zoom1} and \cref{fig:shuosher-zoom2}, we  present a zoomed-in view of the solution for $x\in[0.5,2.5]$. For the second-order schemes, it seems that the sdRKDG scheme is slightly more dissipative than the standard RKDG method, especially under a larger CFL condition. For the third-order schemes, the sdRKDG solutions are in good agreement with those of the standard RKDG solutions, regardless of the time step size.

	\begin{figure}[h!]
	\centering
		\begin{subfigure}[t]{.24\textwidth}
			\centering
			\includegraphics[trim=0cm 1cm 0cm 1cm,width=1. \linewidth,height=1.\linewidth]{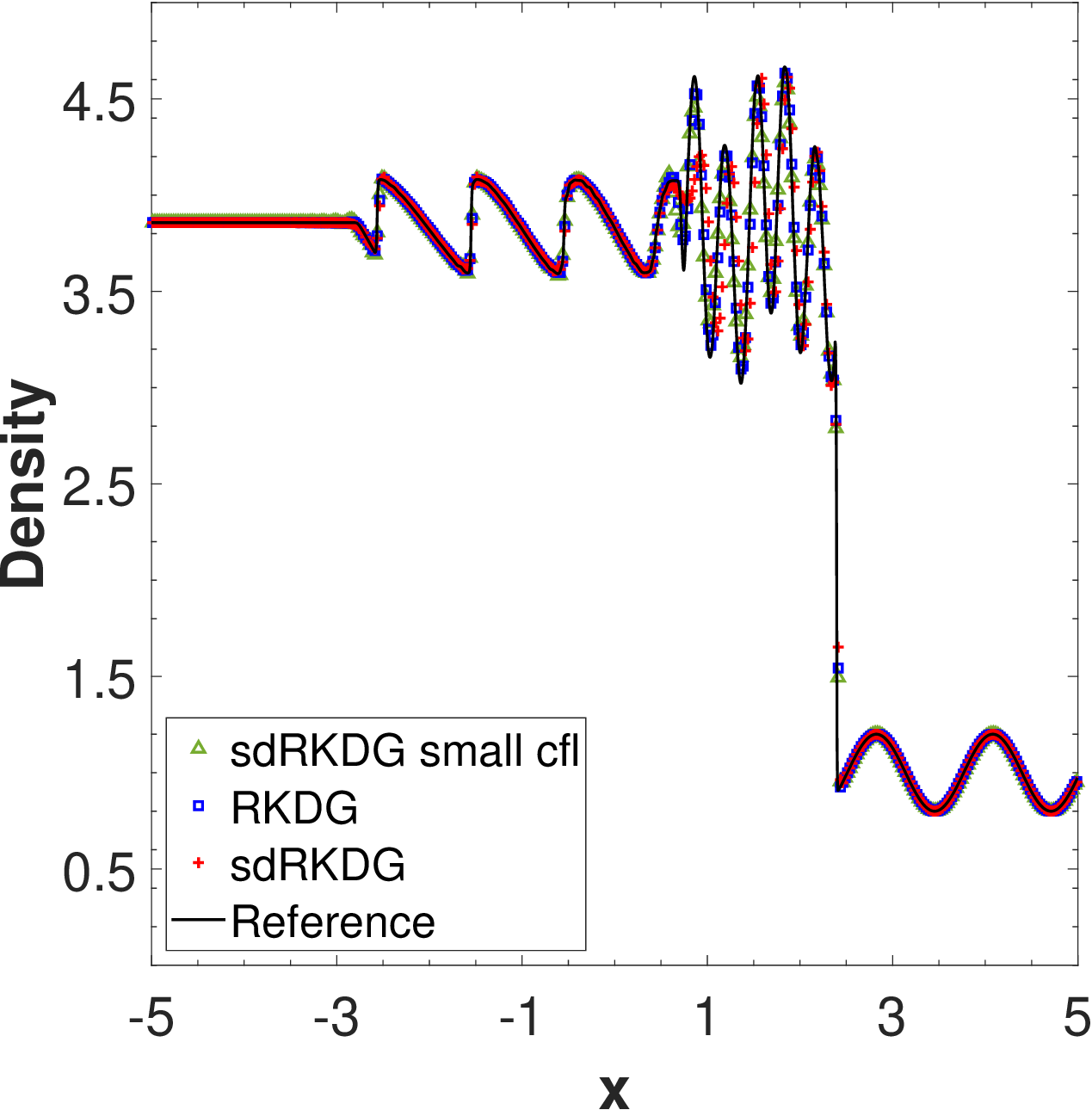}
			\caption{$k=1$}
		\end{subfigure}
		\begin{subfigure}[t]{.24\textwidth}
			\centering
			\includegraphics[trim=0cm 1cm 0cm 1cm,width=1. \linewidth,height=1.\linewidth]{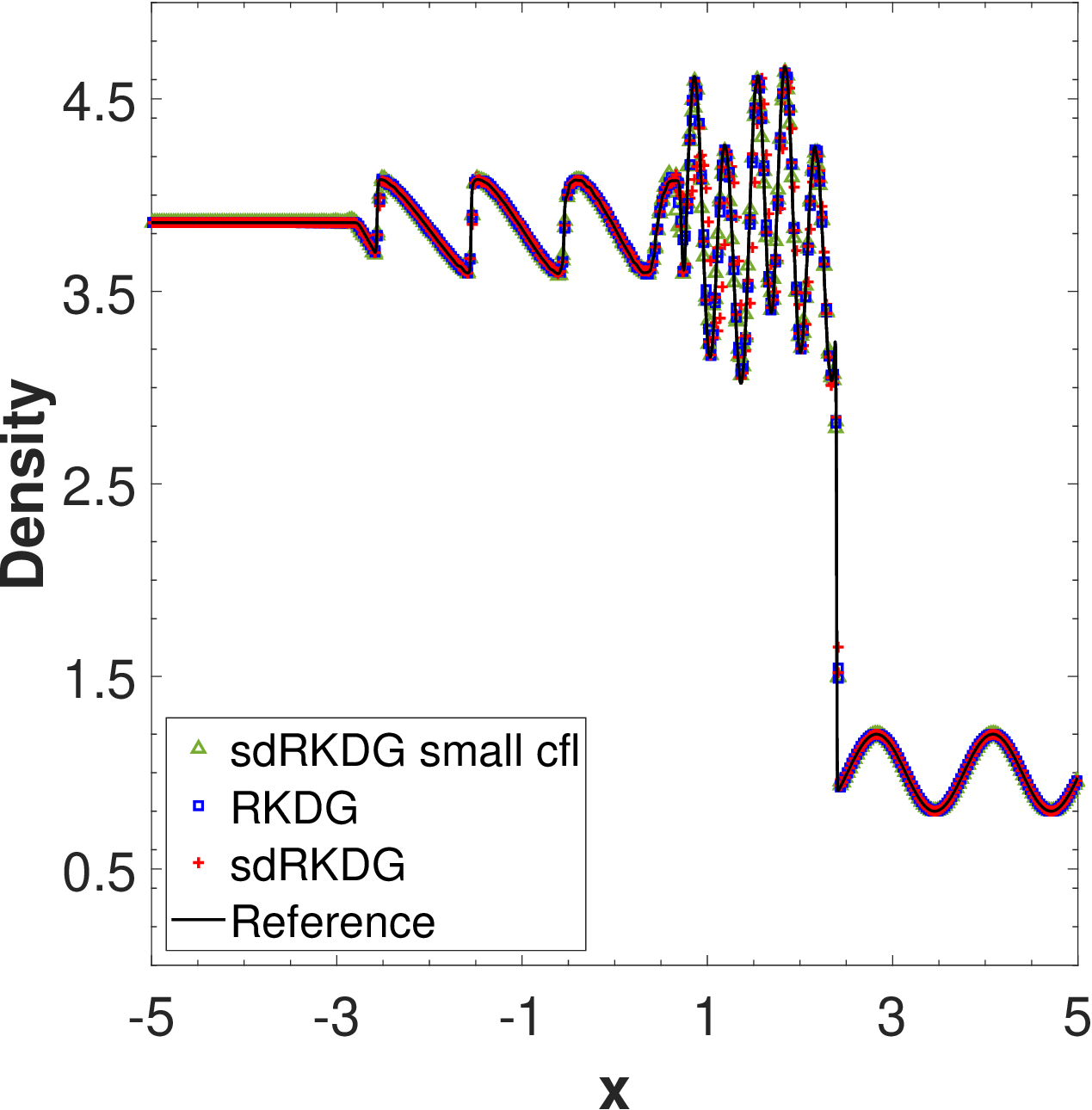}
			\caption{$k=2$}
		\end{subfigure} 
		\begin{subfigure}[t]{.24\textwidth}
			\centering
			\includegraphics[trim=0cm 1cm 0cm 1cm,width=1. \linewidth,height=1.\linewidth]{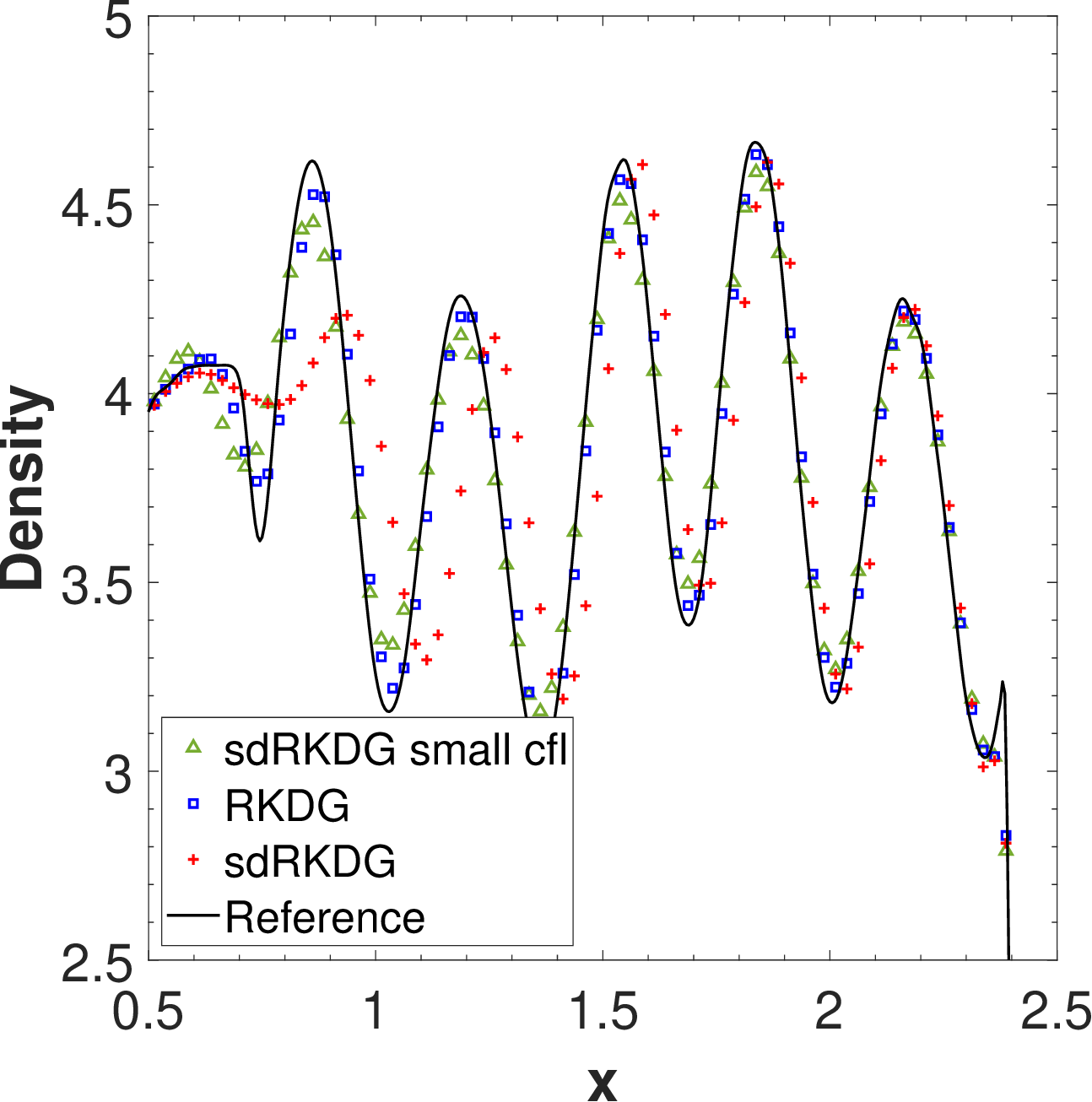}
			\caption{$k=1$ (Zoomed-in)}\label{fig:shuosher-zoom1}
		\end{subfigure}
		\begin{subfigure}[t]{.24\textwidth}
			\centering
			\includegraphics[trim=0cm 1cm 0cm 1cm,width=1. \linewidth,height=1.\linewidth]{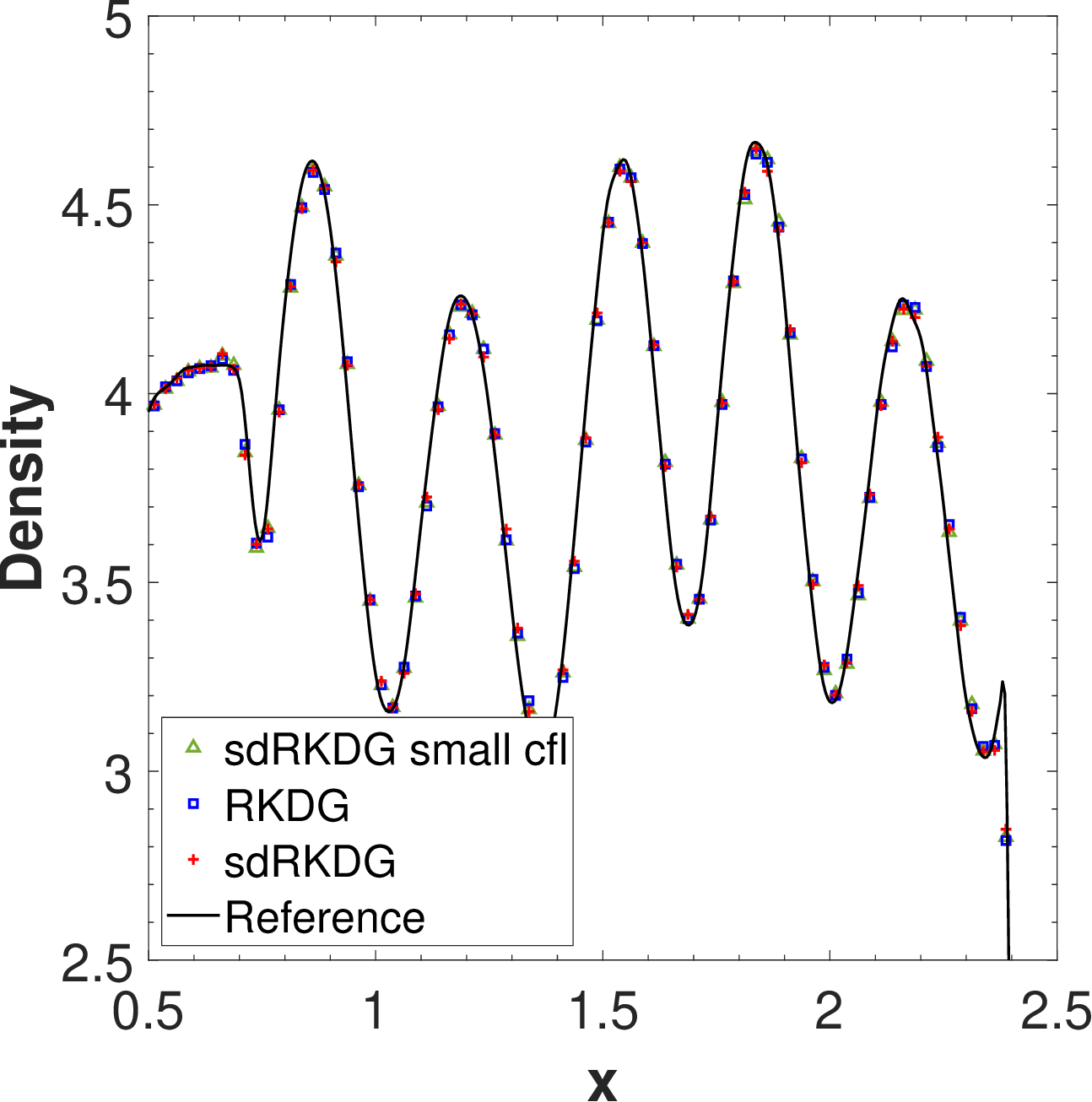}
			\caption{$k=2$ (Zoomed-in)}\label{fig:shuosher-zoom2}
		\end{subfigure} 
		\caption{Solution profiles for the Shu--Osher problem in \cref{ex:shuosher} at $t=1.8$. $M=300$ and $N=400$.  Here “sdRKDG” solutions are computed with the CFL numbers $0.56$ for
$k = 1$ and $0.27$ for $k = 2$; the “sdRKDG small CFL” and “RKDG” solutions are computed with the CFL number $0.3$ for $k = 1$ and $0.18$ for $k = 2$.
}
		\label{fig:shu-osher}\vspace{-0.3cm}
	\end{figure}	
\end{exmp}
\subsubsection{Two-dimensional tests}
 \begin{exmp}[Double Mach reflection]\label{ex:doublemach}
 The Double Mach problem is a benchmark test initially proposed
 in \cite{woodward1984numerical}.
  The computational domain is set as $[0,4] \times[0,1]$. At $t=0$, a strong planar shock wave of Mach 10 is positioned at $x={1}/{6}$, $y=0$ and inclined at a $60^{\circ}$ angle with respect to the $x$-axis. At the top boundary, the exact motion of the Mach 10 shock is imposed for the flow values. At the left, bottom, and right boundaries of the domain, we use inflow, reflexive, and outflow boundary conditions, respectively. We compute the solution up to $t = 0.2$ and use the TVB limiter with $M = 50$. Limited by space, we only present the simulation results with 
  $1960\times480$ mesh cells for $k = 1,2$ in \cref{fig:doublemach-big}. We also present corresponding zoomed-in figures for the double Mach stem in \cref{fig:doublemach-small}. For this problem, with the same meshes and orders of accuracy, the resolutions of sdRKDG and RKDG methods are comparable. 
	\begin{figure}[h!]
	\centering
		\begin{subfigure}[t]{0.49\textwidth}
			\centering
			\includegraphics[trim=1cm 0cm 3cm 0cm, width=\textwidth]{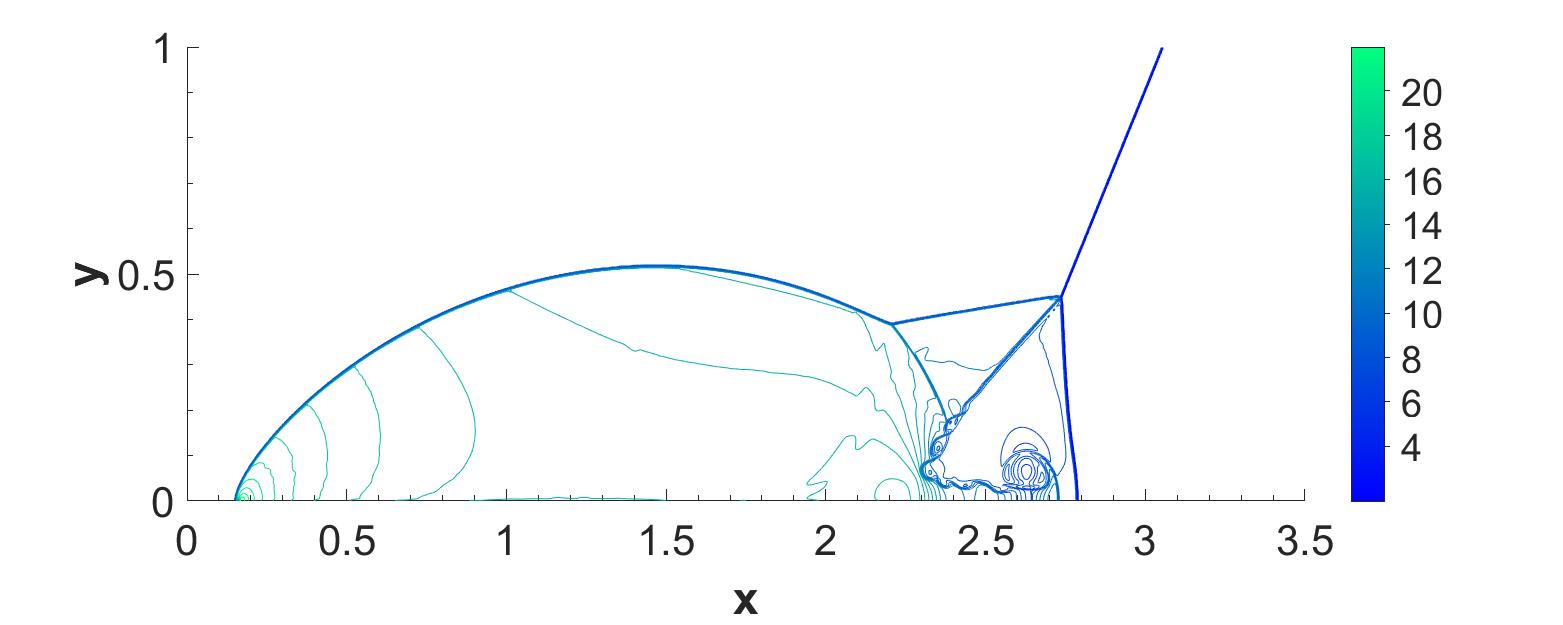}
			\caption{$k=1$, sdRKDG, $1920\times 480$ mesh}
		\end{subfigure}		
		\begin{subfigure}[t]{0.49\textwidth}
			\centering		\includegraphics[trim=1cm 0cm 3cm 0cm, width=\textwidth]{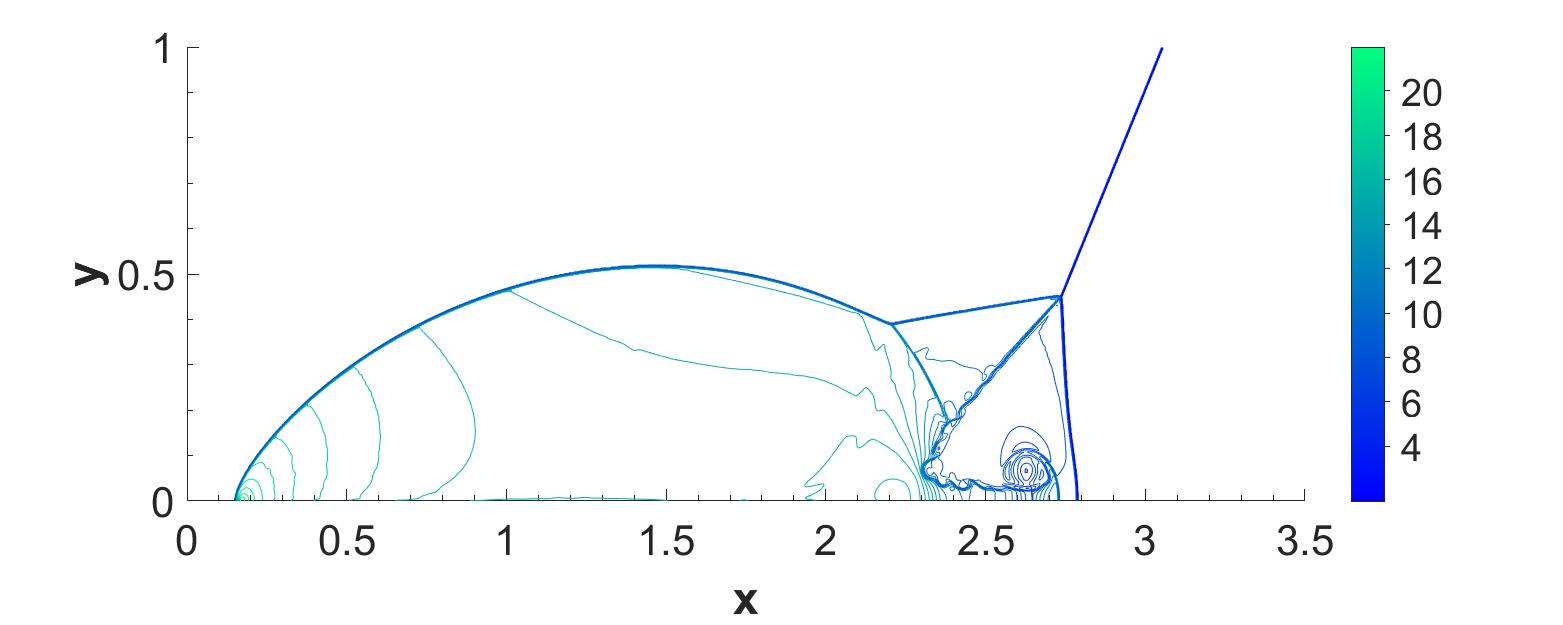}
			\caption{$k=1$, RKDG, $1920\times 480$ mesh}
		\end{subfigure}
  \\
  \begin{subfigure}[t]{0.49\textwidth}
			\centering
			\includegraphics[trim=1cm 0cm 3cm 0cm, width=\textwidth] {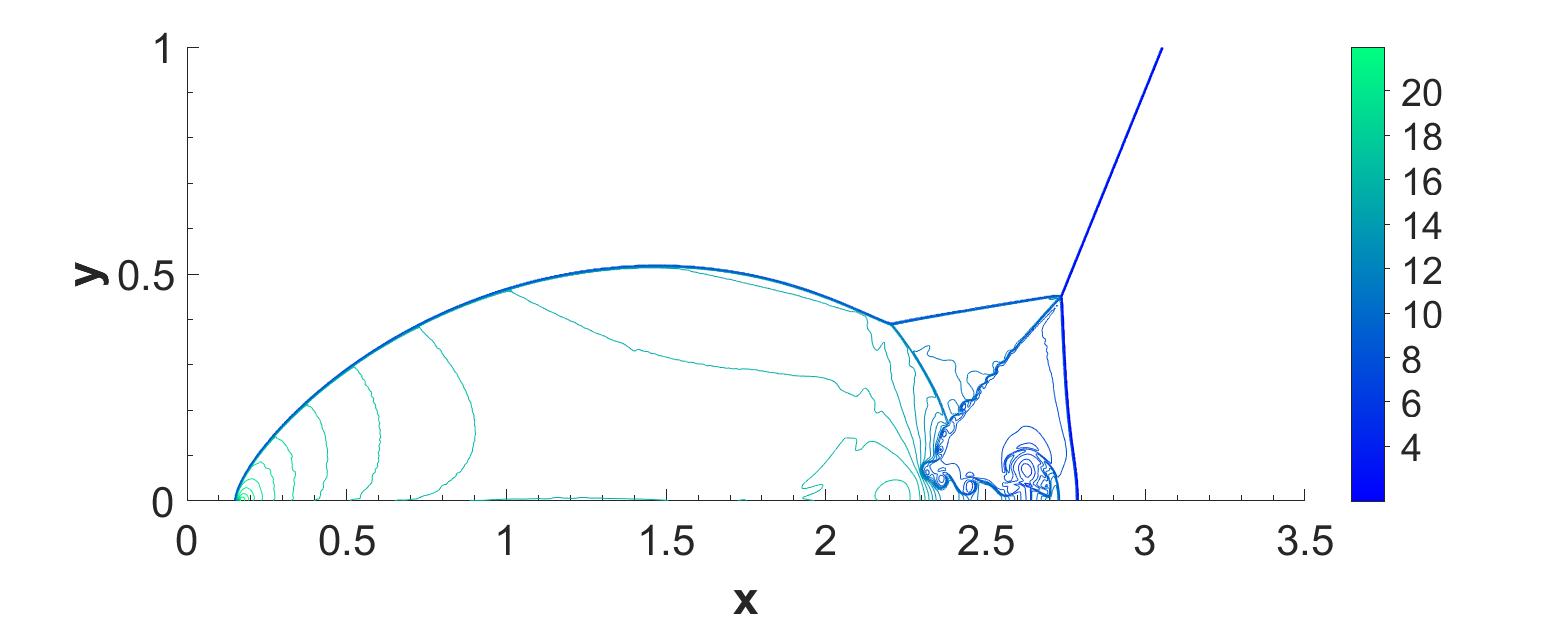}
			\caption{$k=2$, sdRKDG, $1920\times 480$ mesh}
		\end{subfigure} 
		\begin{subfigure}[t]{0.49\textwidth}
			\centering
			\includegraphics[trim=1cm 0cm 3cm 0cm, width=\textwidth]{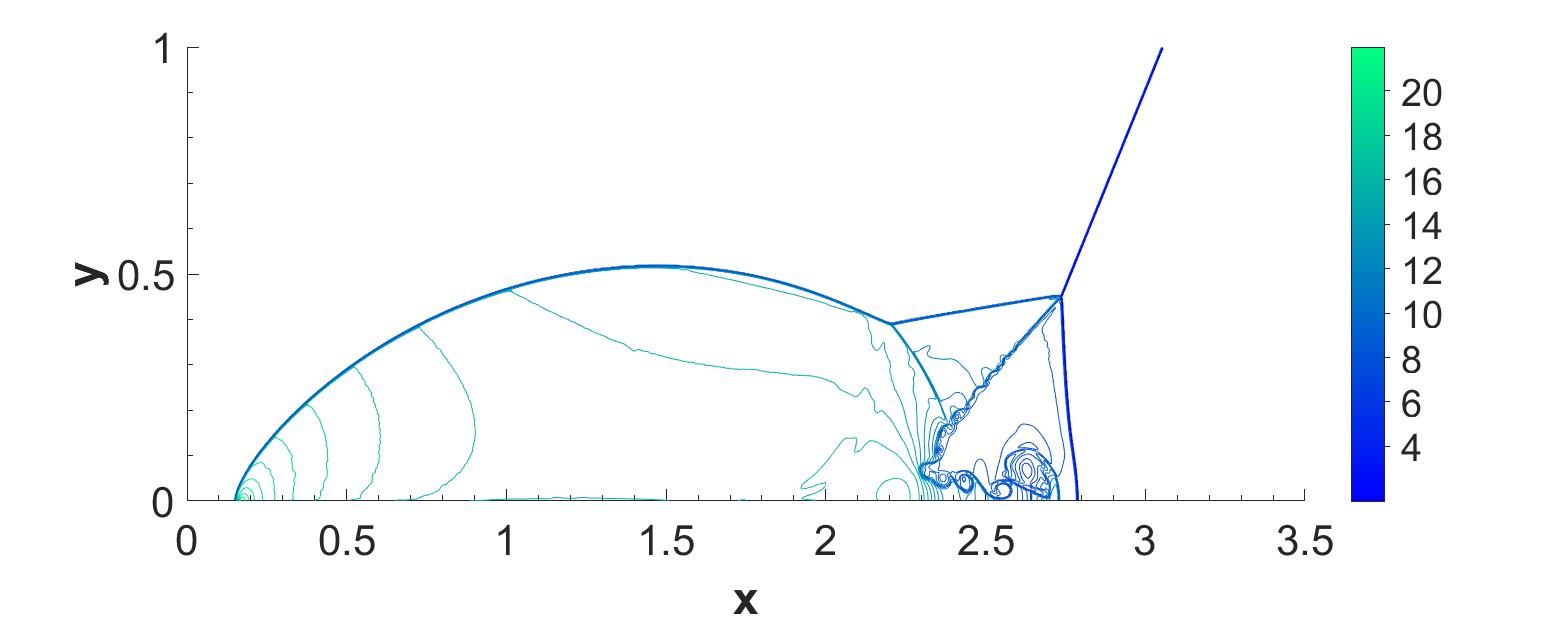}
			\caption{$k=2$, RKDG, $1920\times 480$ mesh}
		\end{subfigure} 
		\caption{Solution profiles for the double Mach problem in \cref{ex:doublemach} at $t=0.2$ with $M=50$. $30$ equally spaced density contours from $1.5$ to $22.7$ are displaced.}
		\label{fig:doublemach-big}\vspace{-0.2cm}
	\end{figure}
	\begin{figure}[h!]
	\centering
		\begin{subfigure}[t]{.4\textwidth}			\includegraphics[width=\textwidth,height=0.7\linewidth]{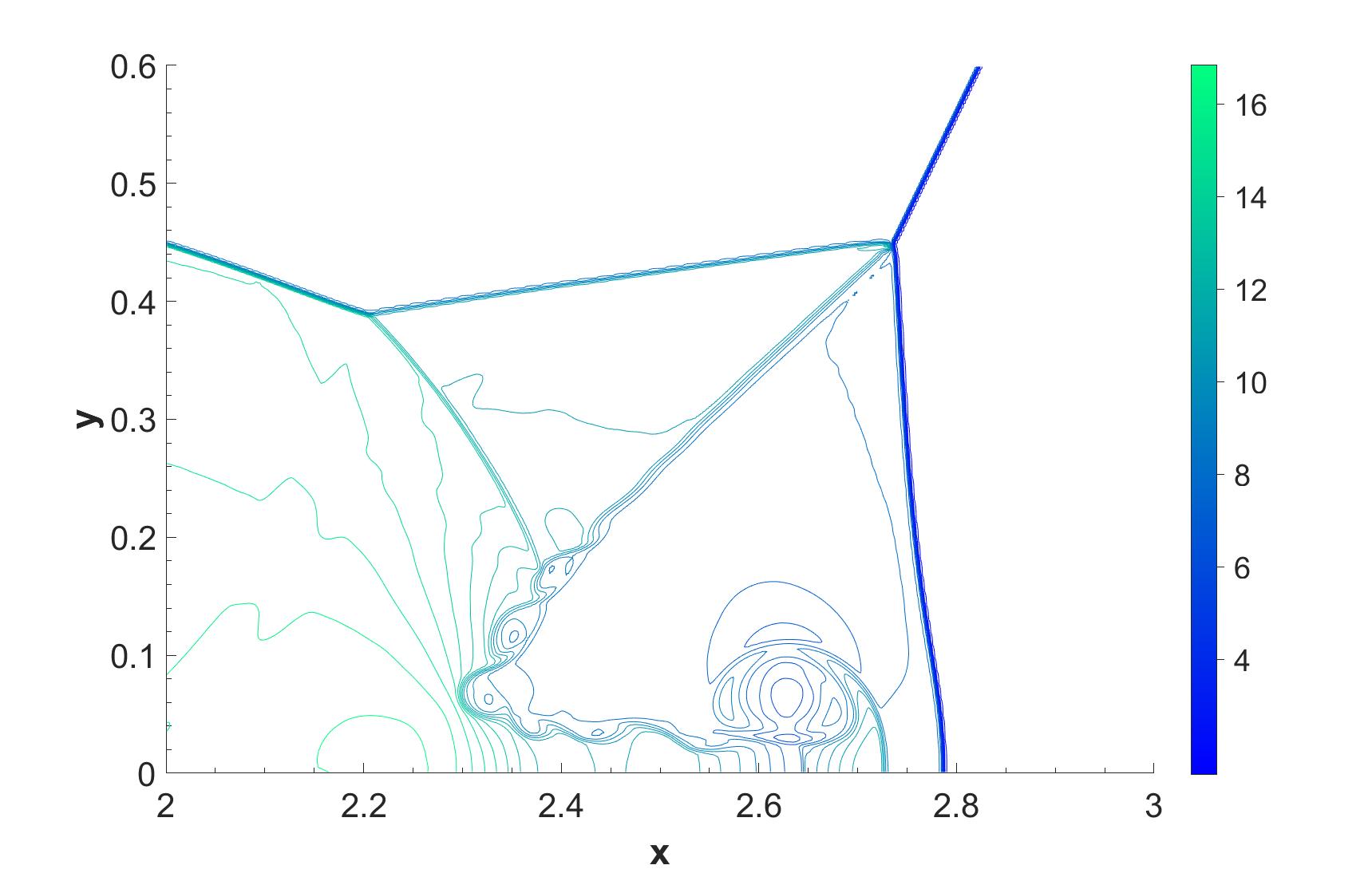}
			\caption{$k=1$, sdRKDG, $1920\times 480$ mesh}
		\end{subfigure}
  \begin{subfigure}[t]{.4\textwidth}
			\includegraphics[width=\textwidth,height=0.7\linewidth]{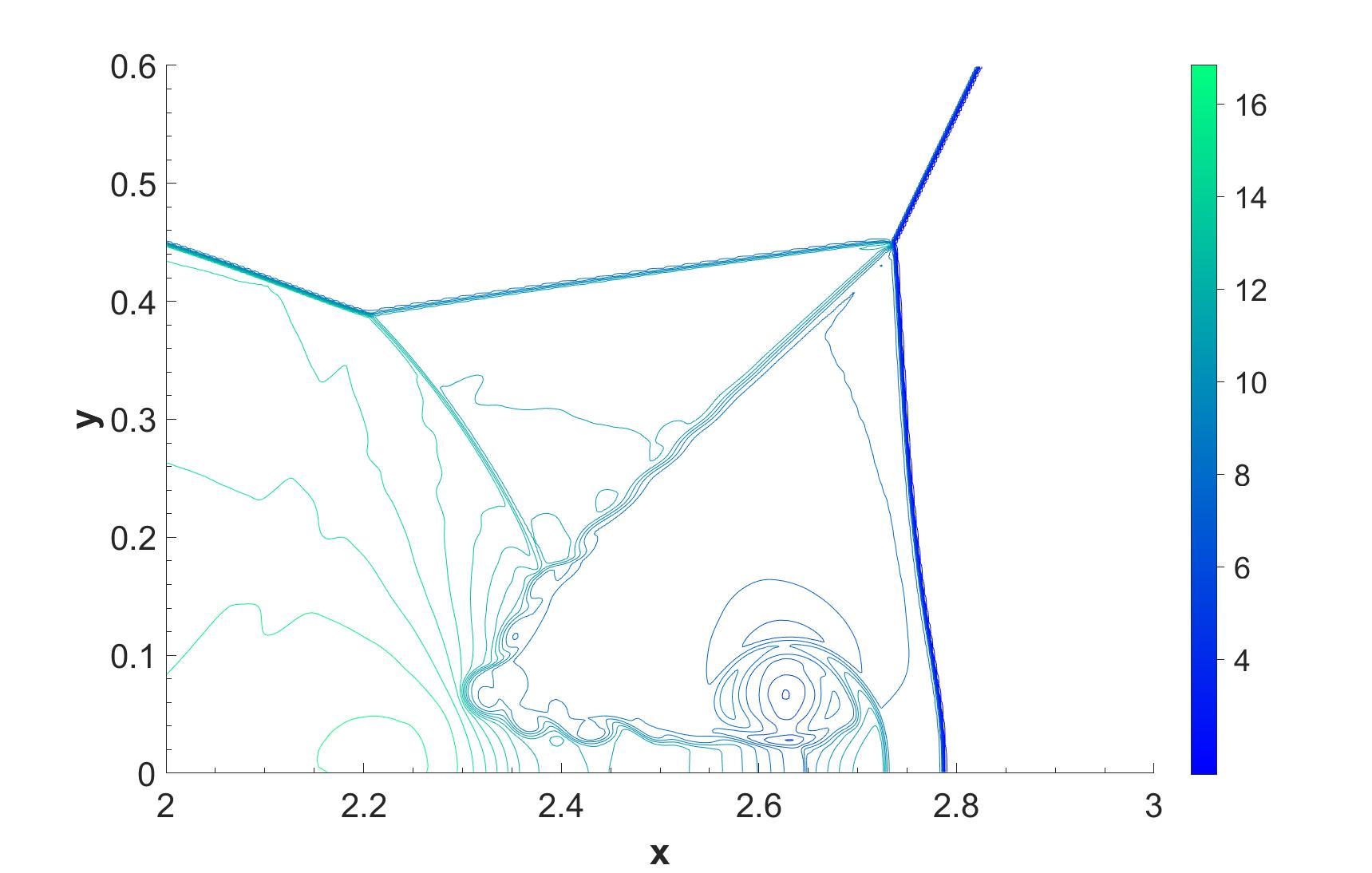}
			\caption{$k=1$, RKDG, $1920\times 480$ mesh}
		\end{subfigure}  
		\begin{subfigure}[t]{.4\textwidth}
			\includegraphics[width=\textwidth,height=0.7\linewidth]{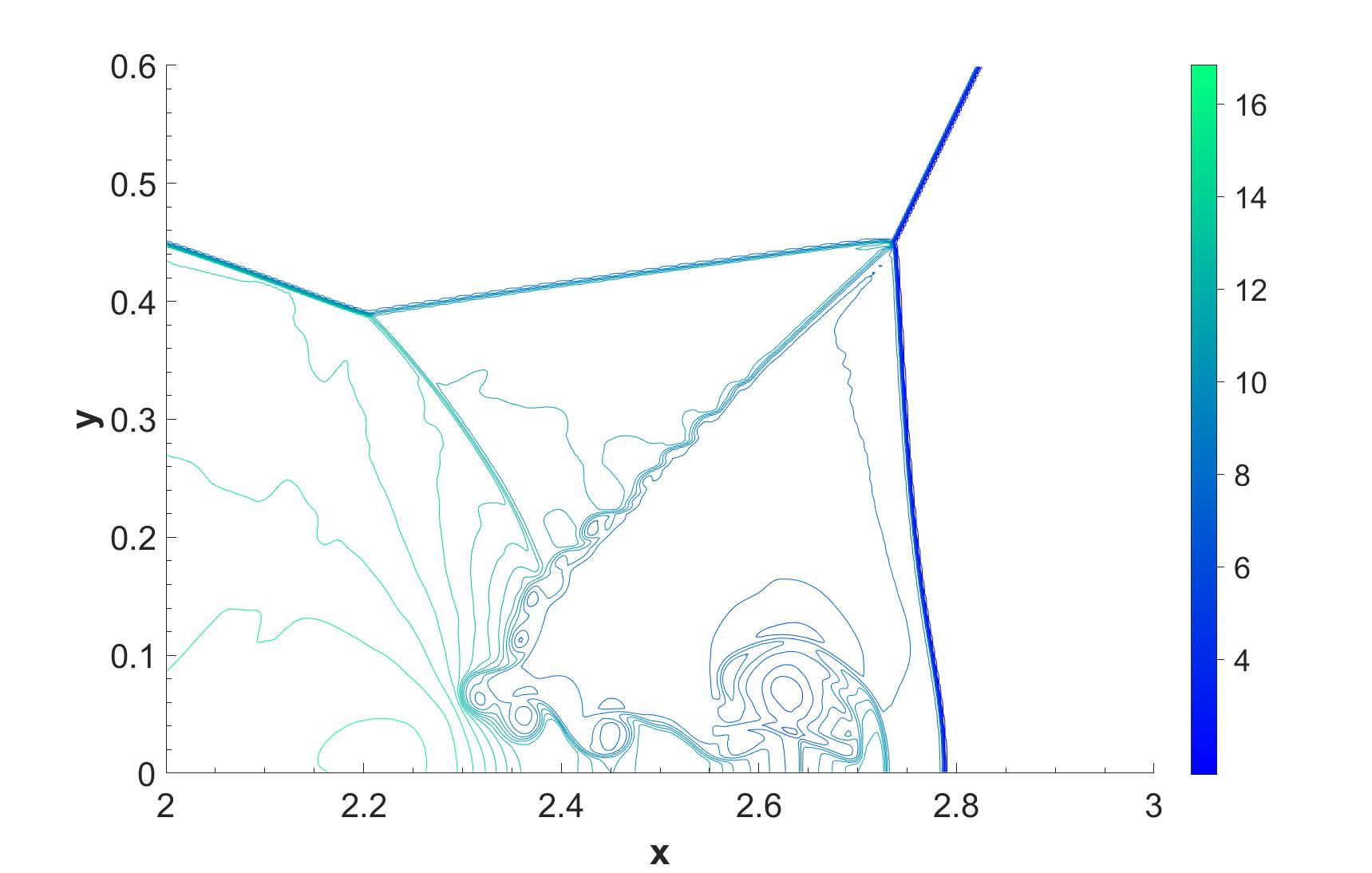}
			\caption{$k=2$, sdRKDG, $1920\times 480$ mesh}
		\end{subfigure}
			\begin{subfigure}[t]{.4\textwidth}
			\includegraphics[width=\textwidth,height=0.7\linewidth]{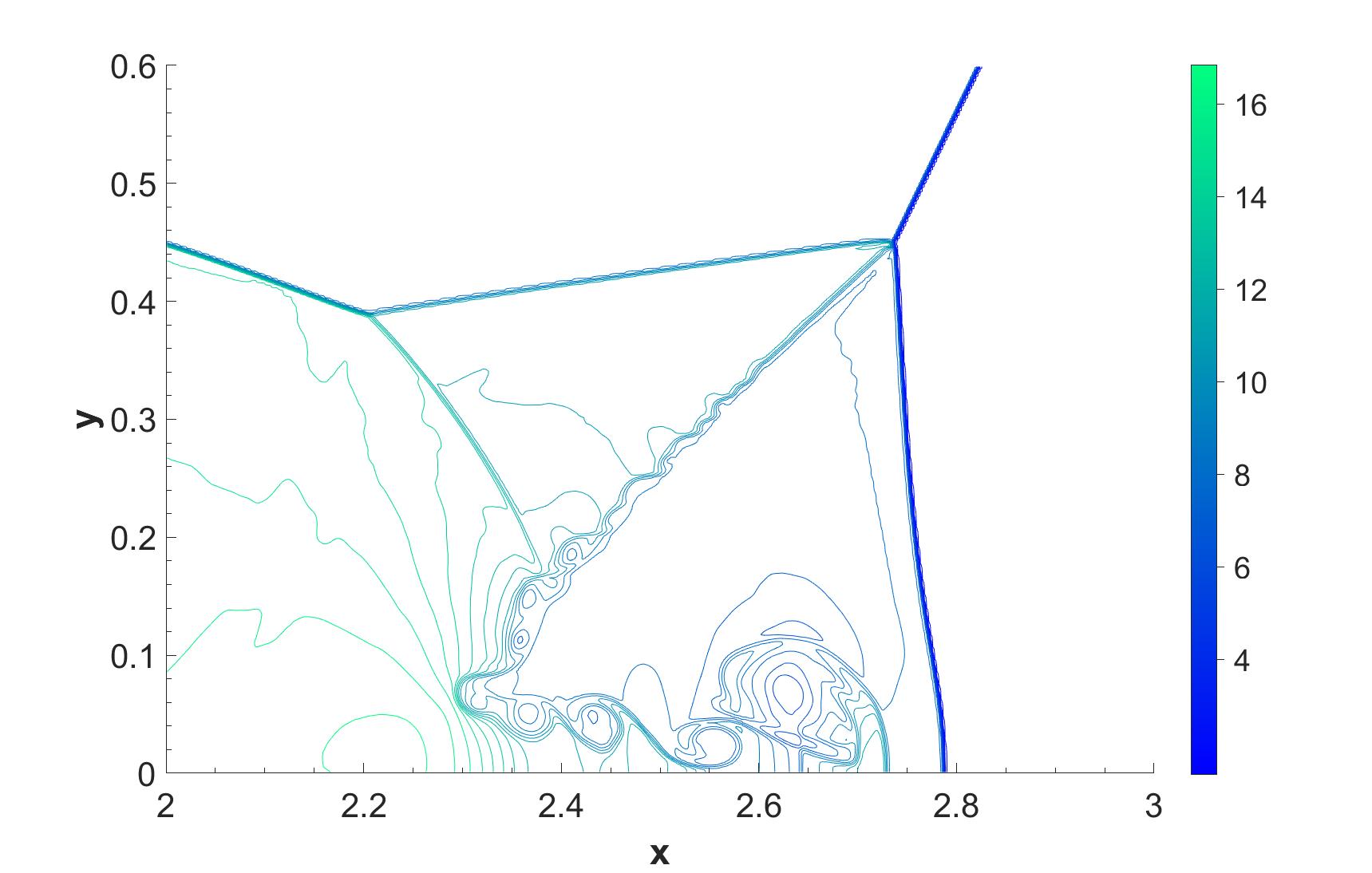}
			\caption{$k=2$, RKDG, $1920\times 480$ mesh}
		\end{subfigure}
		\caption{Zoomed-in double Mach solutions in \cref{fig:doublemach-big} for \cref{ex:doublemach}. }
		\label{fig:doublemach-small}\vspace{-0.3cm}
	\end{figure}
\end{exmp}
\begin{exmp}[Forward facing step]\label{ex:forward-step}
This is another classical test studied in \cite{woodward1984numerical}. In this test, a Mach 3 right-going wave enters a rectangular wind tunnel. The tunnel is 1 length unit wide and 3 length units long, with a step of 0.2 length units high located 0.6 length units from the inlet. Reflective boundary conditions are imposed along the wall of the tunnel, while inflow/outflow boundary conditions are applied at the entrance/exit.  There is a singular point at the corner of the step, but unlike in \cite{woodward1984numerical}, we do not modify our schemes or refine the mesh near the corner in order to evaluate the performance of our schemes in shock wave interactions. We compute the solution up to $t = 4$ and utilize the TVB limiter with a TVB constant $M = 50$. We present the simulation results with $240\times 80$ mesh cells for $k = 1$ and $960\times320$ mesh cells for $k = 1,2$ in \cref{fig:forward-step}. For this problem, the resolutions of sdRKDG and RKDG methods are comparable for the same order of accuracy and mesh.

\begin{figure}[h!]
	\centering
 \begin{subfigure}[t]{.49\textwidth}
			\includegraphics[trim=1cm 0cm 1cm 0cm, width=\textwidth]
                {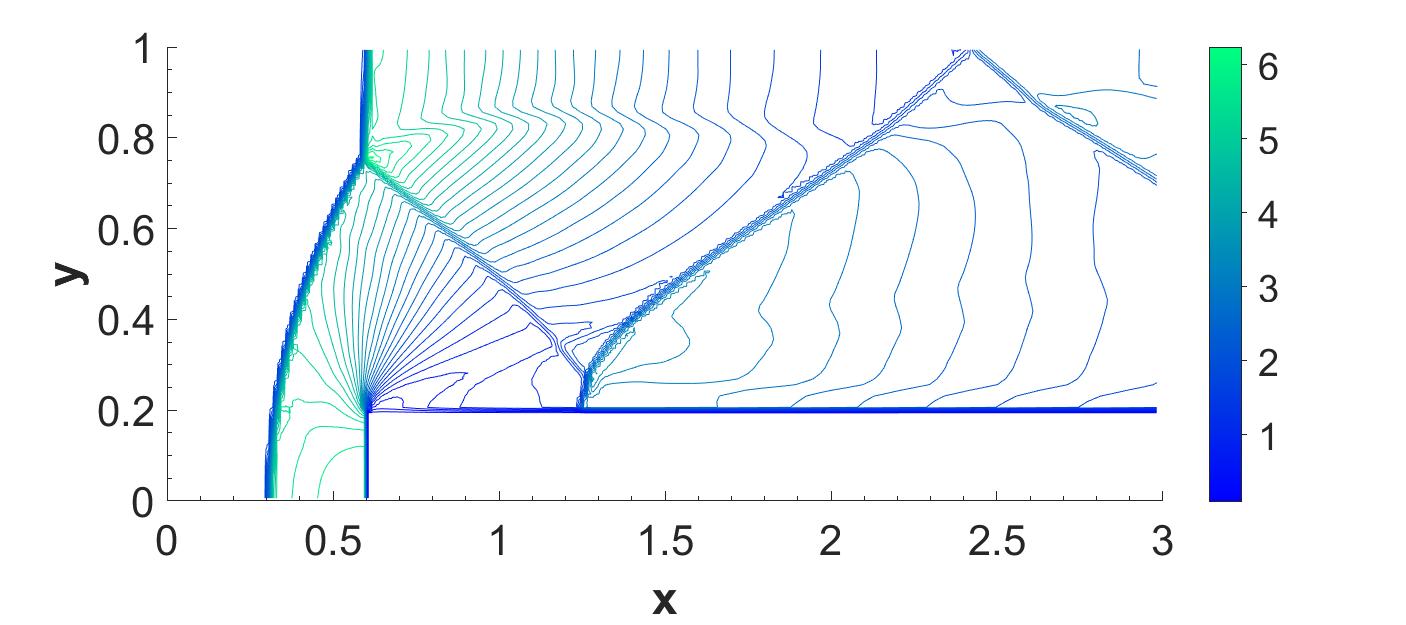}
			\caption{$k=1$, sdRKDG, $240\times 80$ mesh}
		\end{subfigure}
		\begin{subfigure}[t]{.49\textwidth}
			\includegraphics[trim=1cm 0cm 1cm 0cm, width=\textwidth]
                {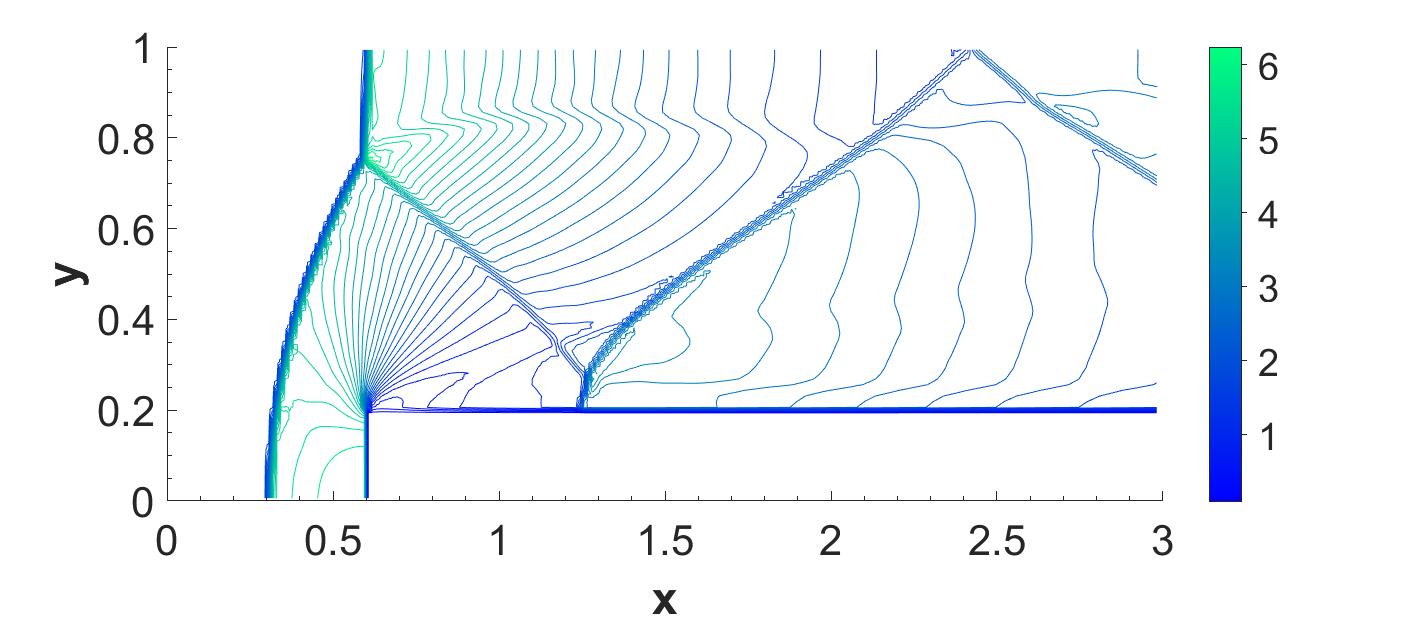}
			\caption{$k=1$, RKDG, $240\times 80$ mesh}
		\end{subfigure}

		\begin{subfigure}[t]{.49\textwidth}
			\includegraphics[trim=1cm 0cm 1cm 0cm, width=\textwidth]
                {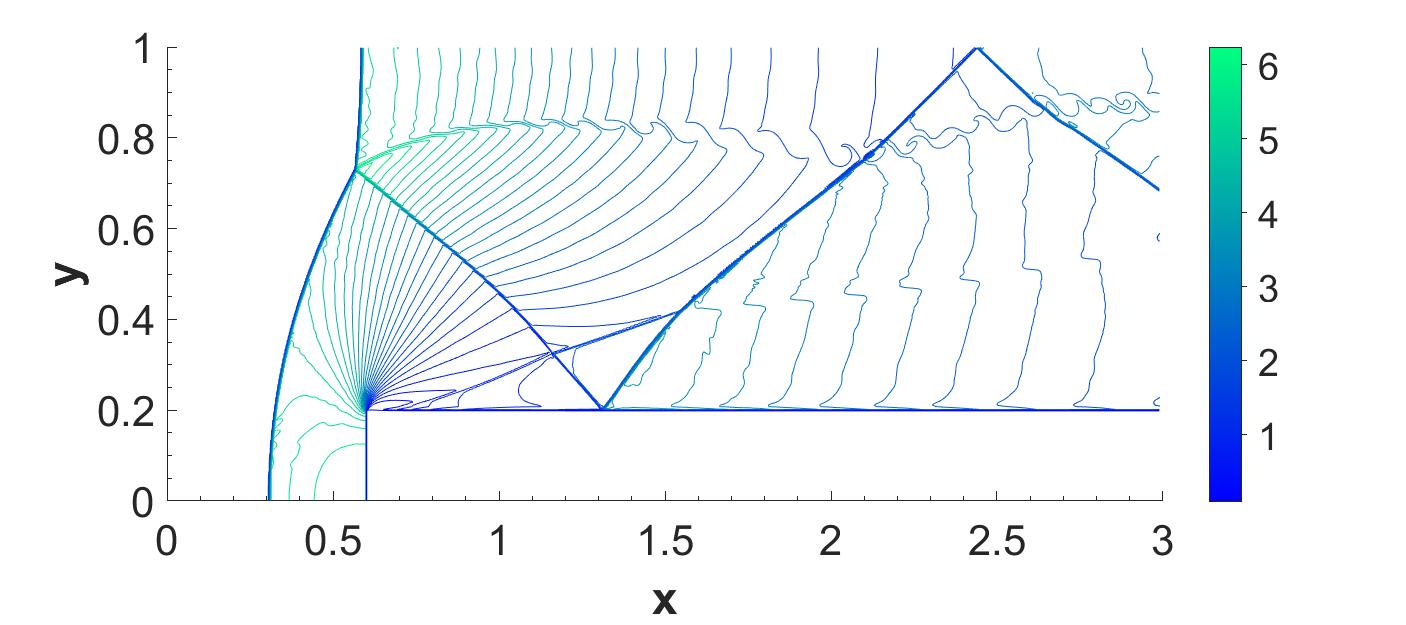}
			\caption{$k=1$, sdRKDG, $960\times 320$ mesh}
		\end{subfigure}
		\begin{subfigure}[t]{.49\textwidth}
			\includegraphics[trim=1cm 0cm 1cm 0cm, width=\textwidth]
                {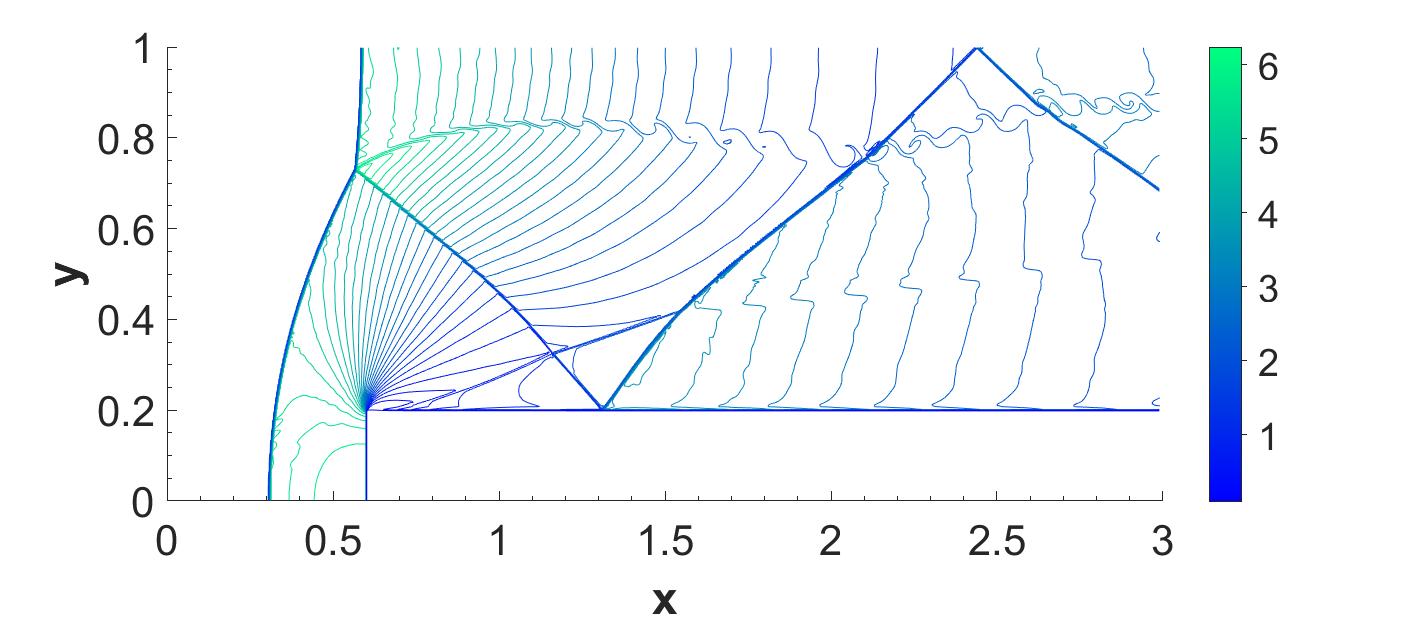}
			\caption{$k=1$, RKDG, $960\times 320$ mesh}
		\end{subfigure}
      \begin{subfigure}[t]{.49\textwidth}
			\includegraphics[trim=1cm 0cm 1cm 0cm, width=\textwidth]
                {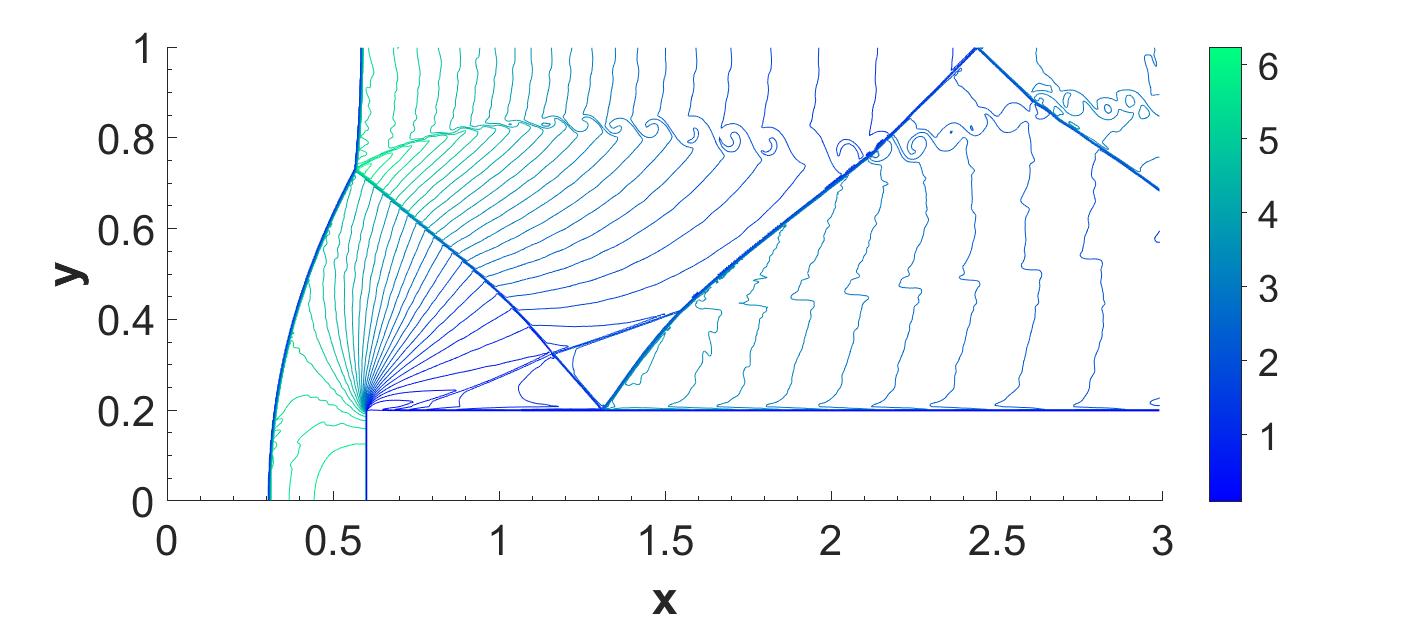}
			\caption{$k=2$, sdRKDG, $960\times 320$ mesh}
		\end{subfigure}
		\begin{subfigure}[t]{.49\textwidth}
			\includegraphics[trim=1cm 0cm 1cm 0cm, width=\textwidth]
                {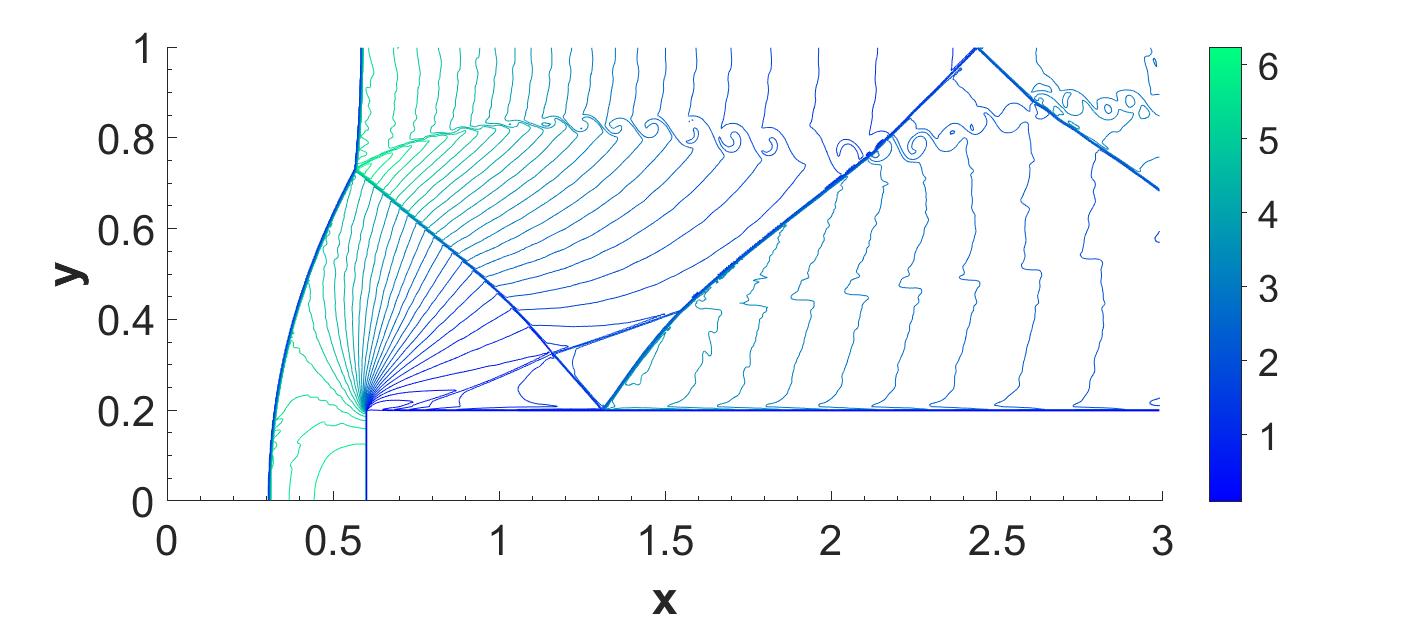}
			\caption{$k=2$, RKDG, $960\times 320$ mesh}
		\end{subfigure}
		\caption{Solution profiles for the forward step problem in \cref{ex:forward-step} at $t=4$ with $M=50$. $30$ equally spaced density contours from $0.090388$ to $6.2365$ are displaced. }
		\label{fig:forward-step}\vspace{-0.2cm}
	\end{figure}
\end{exmp}

\section{Conclusions}\label{sec:conclusions}

In this paper, we propose a class of RKDG schemes for hyperbolic conservation laws with stage-dependent polynomial spaces. The main idea is to blend two different DG operators, with $\mathcal{P}^{k}$ and $\mathcal{P}^{k-1}$ piecewise polynomials, at different stages of the RK scheme of $(k+1)$th order. The new sdRKDG method features a reduced number of floating-point operations and inherits the local conservation and TVD/TVB properties of the standard RKDG method. The von Neumann analysis reveals that the method may remain stable under a more relaxed CFL condition that is 30\% to 70\% larger compared to the standard RKDG method, and attains optimal convergence rate for the linear advection equation with constant coefficients. For problems with sonic points, our numerical tests indicate that the sdRKDG method may maintain optimal accuracy only when all spatial operators in the final stages use $\mathcal{P}^k$ piecewise polynomials. Additional numerical tests are presented to exhibit the effectiveness of the sdRKDG method for problems with discontinuous solutions. 

Although in this paper, we only focus on the hybridization of the $\mathcal{P}^k$ and $\mathcal{P}^{k-1}$ DG operators, the sdRKDG method exemplifies a general approach for discretizing the PDEs beyond MOL and with different stage operators. With potentially another type of spatial discretizations, the variety of selection of the stage operators allows additional flexibility and creates opportunities to improve the standard MOL schemes based on the multi-stage RK discretization. This framework may bring in a new direction with refreshing perspectives that could contribute to the further development and understanding of the existing method, not only in the RKDG framework but also with other types of discretization.

\appendix
\section{Proof of \cref{thm:lwthm}}\label{sec:conservation-proof}
\begin{lemma}\label{lem:lip}
For any $u_h, v_h \in \mathcal{V}_h^k$ with $\nm{u_h}_{L^\infty},\nm{v_h}_{L^\infty}\leq C$, we have
\begin{equation*}
\nm{
    \nabla^\DG_{\ell}\cdot f\left(u_h\right) - \nabla^\DG_{\ell}\cdot f\left(v_h\right) 
    }_{L^\infty(K)}\leq Ch^{-1}\nm{u_h-v_h}_{L^\infty(\widehat{B}_{K})}, \quad \ell = k-1, k.
\end{equation*}
\end{lemma}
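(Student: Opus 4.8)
The plan is to prove the estimate by a local energy argument on each cell, combined with inverse and trace inequalities on the finite-dimensional polynomial spaces. Fix $\ell\in\{k-1,k\}$ and a cell $K$, and set $w_h := \dxDGa{\ell} f(u_h) - \dxDGa{\ell} f(v_h)\in\mathcal{V}_h^\ell$. Subtracting the defining relation \eqref{eq:DG operator} (for $\ell=k$) or \eqref{eq:dxl} (for $\ell=k-1$) written for $v_h$ from the one written for $u_h$, we obtain, for every test function $\phi_h\in\mathcal{V}_h^\ell$,
\[
\int_K w_h \phi_h \dd x = -\int_K \bigl(f(u_h)-f(v_h)\bigr)\cdot\nabla\phi_h \dd x + \sum_{e\in\partial K}\int_e \Bigl(\widehat{f\cdot\nu_{e,K}}(u_h)-\widehat{f\cdot\nu_{e,K}}(v_h)\Bigr)\phi_h \dd l.
\]
The key observation is that $w_h$ itself lies in $\mathcal{V}_h^\ell$, so both cases $\ell=k$ and $\ell=k-1$ can be handled uniformly by taking $\phi_h=w_h$; the $L^2$ projection hidden in \eqref{eq:dxpl} causes no difficulty, since we only ever test against functions in $\mathcal{V}_h^{k-1}$.

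Second, I would take $\phi_h=w_h$ and bound the right-hand side using the hypotheses of \cref{thm:lwthm}. The Lipschitz continuity of $f$ gives $|f(u_h)-f(v_h)|\le L\,|u_h-v_h|$ with Lipschitz constant $L$, and the Lipschitz continuity of the numerical flux gives $|\widehat{f\cdot\nu_{e,K}}(u_h)-\widehat{f\cdot\nu_{e,K}}(v_h)|\le C\nm{u_h-v_h}_{L^\infty(\widehat B_K)}$ on each edge $e\in\partial K$. Combining these with H\"older's inequality yields
\[
\nm{w_h}_{L^2(K)}^2 \le L\,\nm{u_h-v_h}_{L^\infty(K)}\,\nm{\nabla w_h}_{L^1(K)} + C\,\nm{u_h-v_h}_{L^\infty(\widehat B_K)}\,\nm{w_h}_{L^1(\partial K)}.
\]

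Third, on a quasi-uniform mesh the finite-dimensional inverse and trace inequalities (applied on the reference element and rescaled, so that $h_K\sim h$ and $|K|\sim h^d$) give $\nm{\nabla w_h}_{L^1(K)}\le C h^{d/2-1}\nm{w_h}_{L^2(K)}$ and $\nm{w_h}_{L^1(\partial K)}\le C h^{d/2-1}\nm{w_h}_{L^2(K)}$. Since $\nm{u_h-v_h}_{L^\infty(K)}\le\nm{u_h-v_h}_{L^\infty(\widehat B_K)}$, substituting and cancelling one factor of $\nm{w_h}_{L^2(K)}$ leaves $\nm{w_h}_{L^2(K)}\le C h^{d/2-1}\nm{u_h-v_h}_{L^\infty(\widehat B_K)}$. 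Finally, the norm equivalence $\nm{w_h}_{L^\infty(K)}\le C h^{-d/2}\nm{w_h}_{L^2(K)}$ on the finite-dimensional space $\mathcal{P}^\ell(K)$ converts this $L^2$ bound into the claimed $L^\infty$ bound with the $h^{-1}$ scaling.

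The main obstacle is essentially bookkeeping: one must verify that the volume term and the boundary term scale with the same power of $h$ (namely $h^{d/2-1}$), so that the two contributions combine cleanly into a single $h^{-1}$ factor after the $L^2$-to-$L^\infty$ conversion, and track that the degree-dependence of the inverse and trace constants is harmless because $\ell$ ranges only over the fixed pair $\{k-1,k\}$. I also note that, because $f$ is assumed \emph{globally} Lipschitz, the $L^\infty$ bounds on $u_h,v_h$ are not strictly needed for the volume term; they merely keep the hypotheses consistent with the setting in which the flux Lipschitz constant is taken.
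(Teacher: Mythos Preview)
Your proposal is correct and follows essentially the same approach as the paper's proof: both test the difference of the DG operators against $w_h$ itself, invoke the Lipschitz continuity of $f$ and of the numerical flux, apply inverse and trace inequalities to obtain the $L^2$ bound $\nm{w_h}_{L^2(K)}\leq Ch^{d/2-1}\nm{u_h-v_h}_{L^\infty(\widehat B_K)}$, and then convert to $L^\infty$ via norm equivalence. The only cosmetic difference is that the paper first passes from $L^1$ to $L^2$ on the volume and boundary terms (picking up $h^{d/2}$ and $h^{(d-1)/2}$ from Cauchy--Schwarz) and then applies the $L^2$ inverse and trace estimates separately, whereas you combine these two steps into a single inequality on each term.
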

\begin{proof}[Proof of  \cref{lem:lip}]
By the norm equivalence, we have 
\begin{equation}\label{eq:lip1}
    \nm{
    \nabla^\DG_{\ell}\cdot f\left(u_h\right) - \nabla^\DG_{\ell}\cdot f\left(v_h\right) 
    }_{L^\infty(K)}\leq Ch^{-\frac{d}{2}}     \nm{\nabla^\DG_{\ell}\cdot f\left(u_h\right) - \nabla^\DG_{\ell}\cdot f\left(v_h\right) 
    }_{L^2(K)}.
\end{equation}
Therefore, to prove \cref{lem:lip},  it suffices to show that 
\begin{equation}\label{eq:estnablaf}
    \nm{\nabla^\DG_{\ell}\cdot f\left(u_h\right) - \nabla^\DG_{\ell}\cdot f\left(v_h\right) 
    }_{L^2(K)}  \leq Ch^{\frac{d}{2}-1}\nm{u_h-v_h}_{L^\infty(\widehat{B}_{K})}.
\end{equation}
Indeed, for any test function $w_h \in \mathcal{V}_h^\ell$, we can apply the definition of $\nabla_\ell^{\mathrm{DG}}\cdot f$, the Lipschitz continuity of $f$ and $\widehat{f\cdot \nu_{e,K}}$, and the norm equivalence to get 
\begin{equation*}
\begin{aligned}
    &\int_{K}\left(\nabla^\DG_k\cdot f\left(u_h\right) - \nabla^\DG_k\cdot f\left(v_h\right)\right)w_h \dd x\\
    = &
    -\int_{K} \left(f\left(u_{h}\right)-f\left(v_{h}\right)\right)\cdot \nabla w_h \dd x+ \sum_{e\in \partial K}\int_{e} \left(\widehat{f\cdot \nu_{e,K}}(u_h) - \widehat{f\cdot \nu_{e,K}}(v_h)\right) w_h \dd l\\
    \leq &C\nm{u_{h} - v_{h}}_{L^\infty(\widehat{B}_K)}
    \left(\int_{K} | \nabla w_h| \dd x + \sum_{e\in \partial K}\int_{e} |w_h| \dd l\right)\\
    \leq &Ch^{\frac{d}{2}}\nm{u_{h} - v_{h}}_{L^\infty(\widehat{B}_K)}\left(
      \nm{\nabla w_h}_{L^2(K)} + h^{-\frac{1}{2}} \nm{w_h}_{L^2(\partial K)}\right).
\end{aligned}
\end{equation*}
Applying the inverse estimates
$\nm{\nabla w_h}_{L^2(K)}\leq Ch^{-1}\nm{w_h}_{L^2(K)}$ and $\nm{w_h}_{L^2(\partial K)}\leq Ch^{-1/2}\nm{w_h}_{L^2(K)}$, it gives
\begin{equation*}
\int_{K}\left(\nabla^\DG_\ell\cdot f\left(u_h\right) - \nabla^\DG_\ell\cdot f\left(v_h\right)\right)w_h \dd x
    \leq Ch^{\frac{d}{2}-1}\nm{u_h-v_h}_{L^\infty(\widehat{B}_K)}\nm{w_h}_{L^2(K)}.
\end{equation*}
We can take $w_h = \nabla^\DG_\ell\cdot f\left(u_h\right) - \nabla^\DG_\ell\cdot f\left(v_h\right)$ to prove \eqref{eq:estnablaf}, which together with \eqref{eq:lip1} implies \cref{lem:lip}. 
\end{proof}
\begin{proof}[A sketch of the proof of \cref{thm:lwthm}]
The proof of the theorem is similar to \cite[Appendix A]{chen2023runge}, but one needs to pay attention to the difference between $\widehat{B}_K$ (the neighbor of $K$) and $B_K$ (the stencil of sdRKDG scheme). 

Testing \eqref{eq:sdRKDG-last} with $1_K$, the characteristic function on $K$, we have 
\begin{equation*}
\bar{u}_h^{n+1} = \bar{u}_h^{n} -\frac{\Delta t}{|K|} \sum_{e\in \partial K} g_{e,K}(u_h^n), \text{ with }  g_{e,K}(u_h^n) = \int_{e} \left(\sum_{i = 1}^s b_i \widehat{f\cdot\nu_{e,K}}\left(u_h^{(i)}\right)\right) \dd l, 
\end{equation*}
and the relationship between $u_h^{(i)}$ and $u_h^n$ is defined inductively in \eqref{eq:sdRKDG}. According to \cite{shi2018local}, or \cite[Theorem A.1]{chen2023runge}, to prove the conservative property of the scheme, one only needs to verify that $g_{e,K}(u_h^n)$ is consistent, bounded, and anti-symmetric. The proofs of consistency and anti-symmetry are trivial, similar to those in \cite{chen2023runge}. The major complication is to show the boundedness: 
\begin{equation}\label{eq:bounded}
    |g_{e,K}(u_h^n) - g_{e,K}(v_h^n)|\leq C\nm{u_h^n - v_h^n}_{L^{\infty}(B_K)}h^{d-1}, \quad \forall u_h^n, v_h^n \in \mathcal{V}_h^k. 
\end{equation}
Here $B_K$ is the stencil of the sdRKDG scheme for acquiring $u_h^{n+1}$ from $u_h^n$ on $K$. 

Indeed, with the Lipschitz continuity of $\widehat{f\cdot\nu_{e,K}}$, we have 
\begin{equation}\label{eq:bounded-1}
\begin{aligned}
    |g_{e,K}(u_h^n) - g_{e,K}(v_h^n)|\leq &\,\sum_{i = 1}^s\int_e |b_i||\widehat{f\cdot\nu_{e,K}}(u_h^{(i)}) - \widehat{f\cdot\nu_{e,K}}(v_h^{(i)})|\dd l\\
    \leq&\,  C \sum_{i = 1}^s  |b_i||e|\nm{u_h^{(i)}-v_h^{(i)}}_{L^\infty(\widehat{B}_K)}.
\end{aligned}
\end{equation}
To estimate $\nm{u_h^{(i)}-v_h^{(i)}}_{L^\infty(\tilde{K})}$ on one mesh cell with ${\tilde{K}\in \widehat{B}_K}$, we use the definition in \eqref{eq:sdRKDG-in} and the triangle inequality to get
\begin{equation*}
    \begin{aligned}
    &\nm{u_h^{(i)}-v_h^{(i)}}_{L^\infty(\tilde{K})}\\
    \leq &\nm{u_h^n-v_h^n}_{L^\infty(\tilde{K})} + \Delta t\sum_{j = 1}^{i-1}|a_{ij}|\nm{
    \nabla^\DG_{d_{ij}}\cdot f\left(u_h^{(j)}\right) - \nabla^\DG_{d_{ij}}\cdot f\left(v_h^{(j)}\right) 
    }_{L^\infty(\tilde{K})}. 
    \end{aligned}
\end{equation*}
With \cref{lem:lip} and the CFL condition $\Delta t/h \leq \lambda_0$, we can inductively show that $\nm{u_h^{(i)}-v_h^{(i)}}_{L^\infty(\tilde{K})} \leq C\nm{u_h^{n}-v_h^{n}}_{L^\infty({B}_{K})}$. Then using \eqref{eq:bounded-1}, \eqref{eq:bounded} is proved. 
\end{proof}

\bibliography{reference_abbr}  

\begin{thebibliography}{10}

\bibitem{chan2018discretely}
{\sc J.~Chan}, {\em {On discretely entropy conservative and entropy stable
  discontinuous Galerkin methods}}, J. Comput. Phys., 362 (2018), pp.~346--374.

\bibitem{chen2023runge}
{\sc Q.~Chen, Z.~Sun, and Y.~Xing}, {\em {The Runge--Kutta discontinuous
  Galerkin method with compact stencils for hyperbolic conservation laws}},
  arXiv preprint arXiv:2307.06471,  (2023).

\bibitem{chen2017entropy}
{\sc T.~Chen and C.-W. Shu}, {\em {Entropy stable high order discontinuous
  {G}alerkin methods with suitable quadrature rules for hyperbolic conservation
  laws}}, J. Comput. Phys., 345 (2017), pp.~427--461.

\bibitem{rkdg4}
{\sc B.~Cockburn, S.~Hou, and C.-W. Shu}, {\em The {R}unge--{K}utta local
  projection discontinuous {G}alerkin finite element method for conservation
  laws. {I}{V}. the multidimensional case}, Math. Comp., 54 (1990),
  pp.~545--581.

\bibitem{rkdg3}
{\sc B.~Cockburn, S.-Y. Lin, and C.-W. Shu}, {\em {T}{V}{B} {R}unge--{K}utta
  local projection discontinuous {G}alerkin finite element method for
  conservation laws {I}{I}{I}: one-dimensional systems}, J. Comput. Phys., 84
  (1989), pp.~90--113.

\bibitem{rkdg2}
{\sc B.~Cockburn and C.-W. Shu}, {\em {T}{V}{B} {R}unge--{K}utta local
  projection discontinuous {G}alerkin finite element method for conservation
  laws. {I}{I}. general framework}, Math. Comp., 52 (1989), pp.~411--435.

\bibitem{rkdg1}
{\sc B.~Cockburn and C.-W. Shu}, {\em The {R}unge--{K}utta local projection $
  {P}^1$-discontinuous-{G}alerkin finite element method for scalar conservation
  laws}, ESAIM Math. Model. Numer. Anal., 25 (1991), pp.~337--361.

\bibitem{rkdg5}
{\sc B.~Cockburn and C.-W. Shu}, {\em The {R}unge--{K}utta discontinuous
  {G}alerkin method for conservation laws {V}: multidimensional systems}, J.
  Comput. Phys., 141 (1998), pp.~199--224.

\bibitem{cockburn2001runge}
{\sc B.~Cockburn and C.-W. Shu}, {\em {R}unge--{K}utta discontinuous {G}alerkin
  methods for convection-dominated problems}, J. Sci. Comput., 16 (2001),
  pp.~173--261.

\bibitem{croci2022mixed}
{\sc M.~Croci and G.~R. de~Souza}, {\em {Mixed-precision explicit stabilized
  Runge--Kutta methods for single-and multi-scale differential equations}}, J.
  Comput. Phys., 464 (2022), p.~111349.

\bibitem{gottlieb2011strong}
{\sc S.~Gottlieb, D.~I. Ketcheson, and C.-W. Shu}, {\em Strong stability
  preserving {R}unge-{K}utta and multistep time discretizations}, World
  Scientific, 2011.

\bibitem{gottlieb2001strong}
{\sc S.~Gottlieb, C.-W. Shu, and E.~Tadmor}, {\em {Strong stability-preserving
  high-order time discretization methods}}, SIAM Rev., 43 (2001), pp.~89--112.

\bibitem{grant2022perturbed}
{\sc Z.~J. Grant}, {\em Perturbed {R}unge--{K}utta methods for mixed precision
  applications}, J. Sci. Comput., 92 (2022), p.~6.

\bibitem{guo2015new}
{\sc W.~Guo, J.-M. Qiu, and J.~Qiu}, {\em {A new Lax--Wendroff discontinuous
  {G}alerkin method with superconvergence}}, J. Sci. Comput., 65 (2015),
  pp.~299--326.

\bibitem{henshaw2006high}
{\sc W.~D. Henshaw}, {\em {A high-order accurate parallel solver for
  Maxwell’s equations on overlapping grids}}, SIAM J. Sci. Comput., 28
  (2006), pp.~1730--1765.

\bibitem{jiang1994cell}
{\sc G.~S. Jiang and C.-W. Shu}, {\em {On a cell entropy inequality for
  discontinuous Galerkin methods}}, Math. Comp., 62 (1994), pp.~531--538.

\bibitem{li2019analysis}
{\sc J.~Li, D.~Zhang, X.~Meng, and B.~Wu}, {\em {Analysis of discontinuous
  Galerkin methods with upwind-biased fluxes for one dimensional linear
  hyperbolic equations with degenerate variable coefficients}}, J. Sci.
  Comput., 78 (2019), pp.~1305--1328.

\bibitem{liu2020optimal}
{\sc Y.~Liu, C.-W. Shu, and M.~Zhang}, {\em {Optimal error estimates of the
  semidiscrete discontinuous Galerkin methods for two dimensional hyperbolic
  equations on Cartesian meshes using Pk elements}}, ESAIM Math. Model. Numer.
  Anal., 54 (2020), pp.~705--726.

\bibitem{meng2016optimal}
{\sc X.~Meng, C.-W. Shu, and B.~Wu}, {\em {Optimal error estimates for
  discontinuous Galerkin methods based on upwind-biased fluxes for linear
  hyperbolic equations}}, Math. Comp., 85 (2016), pp.~1225--1261.

\bibitem{ranocha2020relaxation}
{\sc H.~Ranocha, M.~Sayyari, L.~Dalcin, M.~Parsani, and D.~I. Ketcheson}, {\em
  {Relaxation Runge--Kutta methods: Fully discrete explicit entropy-stable
  schemes for the compressible Euler and Navier--Stokes equations}}, SIAM J.
  Sci. Comput., 42 (2020), pp.~A612--A638.

\bibitem{sanderse2019constraint}
{\sc B.~Sanderse and A.~E. Veldman}, {\em {Constraint-consistent Runge--Kutta
  methods for one-dimensional incompressible multiphase flow}}, J. Comput.
  Phys., 384 (2019), pp.~170--199.

\bibitem{shi2018local}
{\sc C.~Shi and C.-W. Shu}, {\em {On local conservation of numerical methods
  for conservation laws}}, Comput. \& Fluids, 169 (2018), pp.~3--9.

\bibitem{shu2009discontinuous}
{\sc C.-W. Shu}, {\em {Discontinuous Galerkin methods: general approach and
  stability}}, in Numerical Solutions of Partial Differential Equations,
  S.~Bertoluzza, S.~Falletta, G.~Russo, and C.-W. Shu, eds., Advanced Courses
  in Mathematics CRM Barcelona, Birkhauser, Basel, 2009, pp.~149--201.

\bibitem{shu2009high}
{\sc C.-W. Shu}, {\em {High order weighted essentially nonoscillatory schemes
  for convection dominated problems}}, SIAM Rev., 51 (2009), pp.~82--126.

\bibitem{SHU198932}
{\sc C.-W. Shu and S.~Osher}, {\em Efficient implementation of essentially
  non-oscillatory shock-capturing schemes, {II}}, J. Comput. Phys., 83 (1989),
  pp.~32--78.

\bibitem{sun2017stability}
{\sc Z.~Sun and C.-W. Shu}, {\em {Stability analysis and error estimates of
  Lax--Wendroff discontinuous {G}alerkin methods for linear conservation
  laws}}, ESAIM Math. Model. Numer. Anal., 51 (2017), pp.~1063--1087.

\bibitem{sun2019strong}
{\sc Z.~Sun and C.-w. Shu}, {\em {Strong stability of explicit {R}unge--{K}utta
  time discretizations}}, SIAM J. Numer. Anal., 57 (2019), pp.~1158--1182.

\bibitem{tsai2014two}
{\sc A.~Y. Tsai, R.~P. Chan, and S.~Wang}, {\em {Two-derivative Runge--Kutta
  methods for PDEs using a novel discretization approach}}, Numer. Algorithms,
  65 (2014), pp.~687--703.

\bibitem{woodward1984numerical}
{\sc P.~Woodward and P.~Colella}, {\em {The numerical simulation of
  two-dimensional fluid flow with strong shocks}}, J. Comput. Phys., 54 (1984),
  pp.~115--173.

\bibitem{xu2020superconvergence}
{\sc Y.~Xu, X.~Meng, C.-W. Shu, and Q.~Zhang}, {\em {Superconvergence analysis
  of the Runge--Kutta discontinuous Galerkin methods for a linear hyperbolic
  equation}}, J. Sci. Comput., 84 (2020), p.~23.

\bibitem{xu2024sdf}
{\sc Y.~Xu, C.-W. Shu, and Q.~Zhang}, {\em {Stability analysis and error
  estimate of the explicit single step time marching discontinuous Galerkin
  method with stage-dependent numerical flux parameters for a linear hyperbolic
  equation in one dimension}}, submitted to Numer. Math.

\bibitem{xu20192}
{\sc Y.~Xu, Q.~Zhang, C.-w. Shu, and H.~Wang}, {\em {The $L^2$-norm stability
  analysis of Runge--Kutta discontinuous Galerkin methods for linear hyperbolic
  equations}}, SIAM J. Numer. Anal., 57 (2019), pp.~1574--1601.

\bibitem{zhang2011third}
{\sc Q.~Zhang}, {\em {Third order explicit {R}unge-{K}utta discontinuous
  {G}alerkin method for linear conservation law with inflow boundary
  condition}}, J. Sci. Comput., 46 (2011), pp.~294--313.

\bibitem{zhang2010maximum}
{\sc X.~Zhang and C.-W. Shu}, {\em {On maximum-principle-satisfying high order
  schemes for scalar conservation laws}}, J. Comput. Phys., 229 (2010),
  pp.~3091--3120.

\bibitem{ZHONG20112814}
{\sc X.~Zhong and C.-W. Shu}, {\em Numerical resolution of discontinuous
  {G}alerkin methods for time dependent wave equations}, Comput. Methods Appl.
  Mech. Engrg., 200 (2011), pp.~2814--2827.

\bibitem{zhong2013simple}
{\sc X.~Zhong and C.-W. Shu}, {\em {A simple weighted essentially
  nonoscillatory limiter for {R}unge--{K}utta discontinuous {G}alerkin
  methods}}, J. Comput. Phys., 232 (2013), pp.~397--415.

\end{thebibliography}
\bibliographystyle{siamplain}

\end{document}